\font\bg=cmbx10 scaled 1200
\def\OO{\Omega}
\def\intll#1#2{\int\limits_{#1}^{#2}}
\def\O{\Omega}
\def\R{\Bbb R}
\def\N{\Bbb N}
\def\o{\"{o}}
\def\à{\`{a}}
\def\è{\`{e}}
\def\ì{\`{i}}
\def\ù{\`{u}}
\def\ò{\`{o}}
\def\é{\'{e}}
\def\dy{\displaystyle}
\def\ve{\varepsilon}
\def\pa{\partial}
\def\be{\begin{equation}}
\def\ba{\begin{array}}
\def\ea{\end{array}}
\def\ee{\end{equation}}
\def\vs1{\vspace{1ex}}
\def\vp{\varphi}
\def\ov{\overline}
\def\po{\partial\Omega}
\def\sex{{s_{\rm ex}}}
\font\sc=cmcsc10
\title{\bg Singular \mbox{\large$p$}-Laplacian parabolic system in exterior domains: higher regularity of
solutions and related properties of extinction and asymptotic behavior in time.
}
\author{\sc Francesca Crispo
\and\sc Carlo Romano Grisanti
\and\sc Paolo Maremonti
}
\date{}
\begin{document}

\maketitle
\noindent{\bf Abstract}  - 
{\small
We consider the IBVP in exterior domains for the $p$-Laplacian parabolic system. We prove regularity up to the boundary, extinction properties for $p\in\left(\frac{2n}{n+2},\frac{2n}{n+1}\right)$ and exponential decay for $p=\frac{2n}{n+1}$.}
 \par\noindent
 \vskip.1cm
{\par\noindent{\bf Keywords:}\ $p$-Laplacian, regularity, extinction and asymptotic behavior of the solution.}
 \vskip.1cm
{\par\noindent{\bf Mathematics Subject Classification:} 35K92,\ 35B65,\ 35B40.}
 \vskip -0.7true cm\noindent
\newcommand{\red}{\protect\bf}
\renewcommand\refname{\centerline
{\red {\normalsize \bf References}}}
\newtheorem{theorem}{Theorem}[section]
\newtheorem{lemma}[theorem]{Lemma}
\newtheorem{corollary}[theorem]{Corollary}
\newtheorem{remark}[theorem]{Remark}
\newtheorem{definition}[theorem]{Definition}
\newtheorem{proposition}[theorem]{Proposition}
\newtheorem{inequality}[theorem]{Inequality}
\numberwithin{equation}{section}
\section{Introduction}

The $p-$Laplace equation is a prototype example of non linear PDE. We consider the parabolic singular case $1<p<2$ for vector valued functions, namely
 \be\label{PF}\begin{array}{ll}\dy
u_t-\nabla\cdot\left(|\nabla u|^{p-2}\nabla u\right)\dy= 0\,,&\hskip-0.2cm\textrm{
in }(0,T)\times\O,
\\\dy \hskip2.4cmu(t,x)\dy=0\,, &\hskip-0.2cm \textrm{ on }
(0,T)\times\po,\;\\\dy\hskip2.4cm u(0,x)=u_\circ(x),&\hskip-0.2cm\mbox{ on
}\{0\}\times\O.\end{array}\ee
where $\O$ is a bounded or exterior $C^2$ domain of $\R^n$ and $u:\O\longrightarrow\R^N$ a vector valued function,
with $n\ge2$ and $N\ge1$. \par Problem \eqref{PF} is widely studied in the case of bounded domains $\O$ and in the case of the Cauchy problem. We would like to say that, in the case of $\O$ bounded, the literature can be split  in two branches. A former is a classical theory which is essentially  devoted to the analysis of the H\o lder's regularity of the gradient of weak solutions, see \cite{AMS},  \cite{BDM},  \cite{CDB},  \cite{choe1},  \cite{choe},  \cite{DB},  \cite{DB1},  \cite{DBF1},  \cite{DBF},  \cite{DMSt},  \cite{LMV},  \cite{misawa},  
\cite{misawa1},  \cite{Miz}, \cite{Sch1}, \cite{W}. The latter is more recent and it is based on the local or global $W^{2,q}$-regularity for suitable exponents $q$,  see \cite{AMS}, \cite{BDV}, \cite{CM}, \cite{LVP}, \cite{Sch2}. In this connection it is important to point out that only in \cite{CM} is obtained the $L^\infty(0, T; W^{2,q}(\O))$ regularity up to the boundary with an exponent $q\geq 2$. It is deduced by the aid of the results in \cite{CBDV} and  \cite{CM2014} related to the boundary value problem associated to the elliptic case.  On the other hand if we exclude the special case of the Cauchy problem, the initial boundary value problem in unbounded domains appears overlooked. The same is for the boundary value problem associated to the steady  equations. The last problem very recently has received contributions  for the  elliptic problem and for a perturbed elliptic problem \cite{CGM,CMmod}.\par The aim of this paper is to fill the gap of results between the cases of the IBVP for $\O$ bounded and IBVP for $\O$ exterior domain.  Particular regards are posed to the questions of the regularity and extinction properties of the solutions.\par This paper is the natural evolution
    of a project, concerning the regularity of the $p$-Laplace system, whose previous chapters are the papers \cite{CM,CGM}. The former deals with the parabolic problem on bounded domains and the latter concerns the elliptic system on exterior domains.
A common feature of the high integrability results in \cite{CM,CGM} (likewise \cite{CBDV,CM2014, CMmod}) is the connection between the power $q$ of summability of the second spatial derivatives and the exponent $p$ which describes the singularity of the operator: as $q$ increases, $p$ must approach 2 from below. Roughy speaking, in the scalar parabolic case, the second derivatives become more integrable as the equation get closer to the heat equation. Together with this constraint, even for bounded domains (see \cite{CM}) we can find other restrictions on $p$ which sound to be more technical than intrinsic to the problem. To get rid of some of these, we refine the duality method exploited in the quoted paper, resorting to a further adjoint problem. The result is obtained for a bounded domain and extended to the case of an exterior one. Our technique allows us also to push upward the exponent of integrability of $D^2u$. In this respect we remark that we obtain a power that is higher than the space dimension, achieving the H\"older continuity of $\nabla u$ up to the boundary, even for an exterior domain.
\par
We like to point out that the special issue about the square summability of $D^2u$ deserves a particular consideration, since the result becomes very clean requiring simply $p>\frac{2n}{n+2}$.
\par
We want to remark that we do not analyze the regularity of the solution, instead we exhibit the existence of a regular solution and we use its uniqueness.
\par In order to tackle the mathematical question related to the extinction of the solutions, we need a $L^s$-theory for $s\in(1,2)$. In this respect we point out that the result of uniqueness holds with the stronger hypothesis of initial data in $L^s\cap L^2$. We would like to remark that we cannot omit the $L^2$ assumption on $u_\circ$. Actually the difficulties are related with the non-linear character of the system and the weakness of the $L^s$-theory for $s<2$. However the same difficulties are met in the IBVP on bounded domains.
The character of unbounded domains and the non-linearity of the $p$-laplacian give a special interest to the technique and to the results. Among the results, we obtain the following generalized energy relation:
\be\label{GER}\| u(t)\|_s^2+c\intll0t\| \nabla u(\tau)\|^p_{\frac{sp}2}\leq \| u_\circ\|_s^2,\mbox{ for all }t>0,\ee where $c$ is independent of $u_\circ$. The above generalized energy inequality assumes a particular interest even in the case of linear parabolic systems. Actually, for the following IBVP
 \be\label{LPP}\begin{array}{rll}\dy
u_t-\Delta u\dy &\hskip-.2cm= 0\,,&\textrm{
in }(0,T)\times\O,
\\\dy u(t,x)\dy&\hskip-.2cm=0\,, & \textrm{ on }(0,T)\times\po,\;\\
\dy u(0,x)&\hskip-.2cm=u_\circ(x),&\mbox{ on
}\{0\}\times\O.\end{array}\ee
 it is well known that the energy equality
\be\label{GERLPS}\| u(t)\|_2^2+2\intll0t\| \nabla u(\tau)\|_2^{ 2}= \| u_\circ\|_2^2\ee
holds for any $t>0$. In the case of a $L^q$-theory, $q\in(1,n]$, the above relation is replaced by estimates of the kind \be\label{GELqN}\|\nabla u(t)\|_q\leq c(t-t_0)^{-\frac12}\|u(t_0)\|_q,\mbox{ for all }t>t_0\geq 0.\ee
It is evident that (\ref{GELqN}) cannot imply 
$$\intll{t_1}t\|\nabla u(\tau)\|_q^2d\tau\leq c\|u_\circ\|^2_q,\mbox{ for all }t>t_1>t_0,$$
but it can only furnish  the weaker property  $\|\nabla u\|_q\in L^2_w(t_0,\infty)$, where $L^2_w$ is the Lorentz space. Hence estimate (\ref{GER}) has a special interest in the case of $p=2$ (linear case), because it reproduces for all $s\in(1,2]$ a property that was relegated only to the $L^2$-theory. 
\par The following theorems are proved in Sections \ref{L2}, \ref{Lq} and \ref{extinction}.
\begin{theorem}\label{W22IN}
Let be $p\in\left(\frac{2n}{n+2},2\right)$, $\O$ a bounded or exterior $C^2$ domain of $\R^n,\ n\ge2$ and $u_\circ\in L^2(\O)$. Then, for any $\ve>0,\ D^2u\in L^\infty(\ve,T;L^2(\O))$, where $u$ is the unique solution $u$ of \eqref{PF} and
\be\label{D2L2Omega}\|D^2 u(t)\|_2\le\frac c{t^{\frac{1+\ov\gamma}{p-1}}}\|u_\circ\|_2^{\frac{(2-p)\ov\gamma+1}{p-1}}+\frac c{t^{\frac1p}}\|u_\circ\|_2^{\frac2p}\ee
with $\ov\gamma=\frac{(n-2)(2-p)}{p(n+2)-2n}$ if $n\ge 3$ or $\ov\gamma=\frac{r-2}{r(p-2)}$ for any $r\in\left(2,\frac2{p-1}\right)\cap\left[2,2+\frac{4(p-1)}{(2-p)^2}\right]$ if $n=2$. Moreover, for any $q\in\left[2,2+\frac{4(p-1)}{(2-p)^2}\right]$ and $\ve>0$ we have that $u_t\in L^\infty(\ve,T;L^q(\O))$ and
\be\label{Lqu_t}\|u_t(t)\|_{q}\leq \frac{c}{t}{\null_{1+\gamma}}\,
\|u_\circ\|_2^{(2-p)\gamma+1},\, \mbox{ a.e. in }(0,T),\ee
with $\gamma=\gamma(q')$ given by \eqref{Maxpostraa}.
\end{theorem}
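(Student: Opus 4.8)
The plan is to exhibit a solution satisfying \eqref{D2L2Omega}--\eqref{Lqu_t} on bounded domains with constants depending only on $n$ and $p$, and then to carry the bounds to an exterior $\Omega$ by exhaustion, the limit being \emph{the} solution of \eqref{PF} by uniqueness. So I would first fix a bounded $C^{2}$ domain $\Omega_k$ (namely $\Omega$ itself if $\Omega$ is bounded, or $\Omega\cap B_k$ otherwise), on which a solution $u^{k}$ of \eqref{PF} of the regularity needed for the computations below exists thanks to \cite{CM} and the elliptic results of \cite{CBDV,CM2014}; every estimate is then derived on $\Omega_k$, and the whole point is that no constant depends on $k$, so that a subsequence of $\{u^{k}\}$ converges to a solution on $\Omega$ carrying the same bounds.

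On $\Omega_k$, testing \eqref{PF} with $u$ gives the standard relations $\|u(t)\|_2\le\|u_\circ\|_2$ and $\int_0^t\|\nabla u\|_p^p\,d\tau\le\tfrac12\|u_\circ\|_2^2$; testing with $t\,u_t$ and integrating in time gives $\|\nabla u(t)\|_p\le c\,t^{-1/p}\|u_\circ\|_2^{2/p}$ together with $\int_0^t\tau\,\|u_t(\tau)\|_2^2\,d\tau\le c\|u_\circ\|_2^2$; the first of these is the origin of the second summand of \eqref{D2L2Omega} and of all the $t^{-1/p}\|u_\circ\|_2^{2/p}$ factors. Next, $w:=u_t$ solves the linear system $w_t-\nabla\cdot\big(\mathcal A(\nabla u)\nabla w\big)=0$, with $\mathcal A(\xi):=D_\xi\!\big(|\xi|^{p-2}\xi\big)$ having eigenvalues between $(p-1)|\nabla u|^{p-2}$ and $|\nabla u|^{p-2}$; testing this system with $w$ shows that $t\mapsto\|u_t(t)\|_2$ is non-increasing, which combined with the above time-weighted bound yields $\|u_t(t)\|_2\le c\,t^{-1}\|u_\circ\|_2$ (i.e.\ $\gamma(2)=0$ in \eqref{Maxpostraa}).

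For the $L^2$ bound on $D^2u$ I would then freeze $t>0$ and read \eqref{PF} as the elliptic system $-\nabla\cdot\big(|\nabla u(t)|^{p-2}\nabla u(t)\big)=f$ with $f:=-u_t(t)\in L^2$ and $\|f\|_2\le c\,t^{-1}\|u_\circ\|_2$. Testing this system with $-\Delta u(t)$ and integrating by parts gives, using $p-1>0$ and $|D^2u\,\nabla u|\le|D^2u||\nabla u|$, the weighted estimate $(p-1)\int|\nabla u|^{p-2}|D^2u|^2\le c\,\|f\|_2\,\|D^2u\|_2$; the weight is removed by the Sobolev interpolation that closes exactly when $p>\tfrac{2n}{n+2}$, i.e.\ when the Sobolev conjugate $\tfrac{np}{n-p}$ of $p$ exceeds $2$, combined on the set $\{|\nabla u|\ge1\}$ with the $L^\infty$ bound on $\nabla u(t)$ available at positive times from the elliptic $C^{1,\alpha}$ theory up to the boundary for the Uhlenbeck structure $|\nabla u|^{p-2}\nabla u$ (as in \cite{CGM,CM2014,CBDV}). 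Collecting the $t$- and $\|u_\circ\|_2$-powers coming from $\|f\|_2$ and from $\|\nabla u(t)\|_p$ through this interpolation — the homogeneity $1/(p-1)$ being intrinsic to the $p$-Laplacian — produces the first summand of \eqref{D2L2Omega}; as a check, $\overline\gamma=\tfrac{(n-2)(2-p)}{p(n+2)-2n}\to0$ as $p\to2$, and \eqref{D2L2Omega} then becomes the heat-equation bound $\|D^2u(t)\|_2\le c\,t^{-1}\|u_\circ\|_2$. For $n=2$ the role of $\tfrac{np}{n-p}>2$ is played by an embedding into $L^{r}$ for $r$ in the indicated range, whence the alternative form of $\overline\gamma$.

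The additional $L^q$ bound \eqref{Lqu_t} on $u_t$ for $q\in\big[2,2+\tfrac{4(p-1)}{(2-p)^2}\big]$ is the delicate part, and I expect it to be the main obstacle. I would obtain it by the refined duality scheme announced in the introduction applied to the linear system for $w=u_t$: for $\psi\in L^{q'}(\Omega_k)$ solve on $(0,t)$ the backward adjoint system $-\phi_t-\nabla\cdot(\mathcal A^{\top}\nabla\phi)=0$, $\phi(t)=\psi$, pair it with $u_t$ along a time slice, and compensate the degeneracy of $\mathcal A(\nabla u)$ on $\{\nabla u=0\}$ by introducing a second (auxiliary) adjoint problem — the ``further adjoint problem'' of the introduction. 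The interpolation and embedding inequalities needed to estimate $\phi$ and to close the pairing are what fix the admissible range of $q$ and the exponent $\gamma=\gamma(q')$ in \eqref{Maxpostraa}, and they are also the reason the $L^2$-assumption on $u_\circ$ cannot be dropped: the $L^q$-theory of the degenerate linear system is genuinely weak for $q\ne2$, and $L^2$ is the level at which it is under control. Finally, with all the above $k$-uniform, I would let $\Omega_k\uparrow\Omega$, extract a convergent subsequence, identify the limit with the unique solution $u$ of \eqref{PF} on $\Omega$, and pass the estimates to the limit; the one remaining routine check is that constants are indeed independent of $k$ at every step.
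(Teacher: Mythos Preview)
Your outline has the logical dependency reversed, and this is a genuine gap. In the paper the $L^q$ estimate on $u_t$ is not an add-on proved after the $D^2u\in L^2$ bound: it is the \emph{input} that makes the $D^2u$ bound possible. The elliptic black box that is actually invoked (Theorem~\ref{CGMThm1.2}) does not take $f\in L^2$; it requires $f\in L^{\hat r}$ with $\hat r=\frac{2n}{n(p-1)+2(2-p)}>2$ for $n\ge 3$ (and $\hat r\in(2,\tfrac{2}{p-1})$ for $n=2$). So the proof of Theorem~\ref{W22IN} in the paper runs: (i) establish Proposition~\ref{ut1}, i.e.\ $u_t(t)\in L^q$ for every $q\in\big[2,\,2+\tfrac{4(p-1)}{(2-p)^2}\big]$, via the pair of adjoint problems in Section~\ref{adj}; (ii) check by an elementary computation that the specific exponent $\hat r$ lies in this interval whenever $p>\tfrac{2n}{n+2}$; (iii) freeze $t$, read \eqref{PF} as an elliptic system with $f=-u_t(t)\in L^{\hat r}\cap(\widehat W^{1,p}_0)'$, and apply Theorem~\ref{CGMThm1.2} to obtain $\|D^2u(t)\|_2\le c\big(\|u_t(t)\|_{\hat r}^{1/(p-1)}+\|u_t(t)\|_{-1,p'}^{1/(p-1)}\big)$, which gives \eqref{D2L2Omega} with $\overline\gamma=\gamma(\hat r')$. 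The duality machinery you describe in your last paragraph is therefore not ``the delicate part'' appended at the end---it is the engine that drives the whole theorem.

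Your alternative route to $\|D^2u\|_2$---test the frozen elliptic system with $-\Delta u$, obtain the weighted bound $(p-1)\int|\nabla u|^{p-2}|D^2u|^2\le c\|u_t\|_2\|D^2u\|_2$, and remove the weight using $\|\nabla u(t)\|_\infty$---does not close as stated. First, the $L^\infty$ bound on $\nabla u$ is not available as an input: in the paper it is obtained only in Section~\ref{C1alpha}, \emph{after} and \emph{from} the $W^{2,q}$ regularity with $q>n$, so invoking it here is circular; and the elliptic $C^{1,\alpha}$ results you cite require the force in $L^q$ with $q$ above a dimensional threshold, which $\|u_t\|_2$ alone does not provide when $n\ge 4$. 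Second, even if one had $\|\nabla u(t)\|_\infty$, its constants on $\Omega_k$ would depend on the diameter and would not survive the exhaustion $\Omega_k\uparrow\Omega$. Third, your ``Sobolev interpolation that closes exactly when $p>\tfrac{2n}{n+2}$'' is not an inequality that turns $\int|\nabla u|^{p-2}|D^2u|^2$ into $\int|D^2u|^2$; the role of the threshold $\tfrac{2n}{n+2}$ in the paper is only to guarantee $\hat r\le 2+\tfrac{4(p-1)}{(2-p)^2}$, so that Proposition~\ref{ut1} feeds Theorem~\ref{CGMThm1.2}. In short: keep your adjoint/duality step, move it to the front, and replace the $-\Delta u$ computation by a direct appeal to the elliptic $W^{2,2}$ theorem with $f=u_t\in L^{\hat r}$.
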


\begin{theorem}\label{W2qbounded}
Let $E\subset\R^n,\ n\ge2$ be a bounded $C^2$ domain, $p>\frac{2n}{n+2}$ and $q\in\left[2,2+\frac{4(p-1)}{(2-p)^2}\right]$.  Moreover, following Definition \ref{C(q)}, let
$$\phi(q)=\left\{\ba{ll}2-\frac1{\ov C(q)}&\mbox{ if }q\not= n\\
\inf\limits_{q>n}\left\{2-\frac1{\ov C(q)}\right\}&\mbox{ if } q=n.\ea\right.$$
If $p>\max\{\phi(2),\phi(q)\}$ then, the unique solution of \eqref{PF} belongs to $L^\infty(\ve,T;W^{2,q}(E))$, for any $\ve>0$.
\end{theorem}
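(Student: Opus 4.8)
\emph{Plan of proof.} My plan is to freeze the time variable and read \eqref{PF}, for a.e.\ fixed $t$, as an elliptic $p$-Laplacian boundary value problem on $E$ with datum $-u_t(t)$, and then to feed it into the elliptic $W^{2,q}$-theory encoded in Definition~\ref{C(q)}. The key input is Theorem~\ref{W22IN}: since $p>\frac{2n}{n+2}$ and $q\in\left[2,2+\frac{4(p-1)}{(2-p)^{2}}\right]$, it yields $u_t\in L^\infty(\ve,T;L^q(E))$ and $D^2u\in L^\infty(\ve,T;L^2(E))$, with $\sup_{\ve<t<T}\big(\|u_t(t)\|_q+\|D^2u(t)\|_2+\|u(t)\|_{W^{1,p}(E)}\big)\le K(\ve,\|u_\circ\|_2)$. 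Hence, for a.e.\ $t\in(\ve,T)$, the function $v:=u(t)$ lies in $W^{1,p}_0(E)\cap W^{2,2}(E)$ and solves
$$-\nabla\cdot\big(|\nabla v|^{p-2}\nabla v\big)=g(t)\quad\text{in }E,\qquad v=0\quad\text{on }\partial E,\qquad g(t):=-u_t(t),$$
with $\|g(t)\|_q$ bounded uniformly in $t\in(\ve,T)$.

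First I would establish the elliptic second-order estimate for this frozen problem, uniformly in $t$. Working on a nondegenerate regularization of \eqref{PF} (say, replacing $|\nabla u|$ by $\sqrt{\mu^{2}+|\nabla u|^{2}}$, or $p$ by $p_m\uparrow p$) so that the manipulations below are justified, I rewrite the equation in nondivergence form and divide by $|\nabla v|^{p-2}$, obtaining
$$\Delta v=-(p-2)\,\frac{\nabla v\cdot D^{2}v\cdot\nabla v}{|\nabla v|^{2}}-|\nabla v|^{2-p}g(t).$$
Taking $L^q(E)$ norms, using the Calder\'on--Zygmund / $W^{2,q}$-estimate on the $C^2$ domain $E$ --- whose constant, together with the contributions of the lower-order terms, is exactly $\ov C(q)$ of Definition~\ref{C(q)} --- and absorbing the resulting term $|p-2|\,\ov C(q)\,\|D^2v\|_q$ into the left-hand side, one reaches
$$\|D^2 u(t)\|_q\le C\big(\|g(t)\|_q+\|u(t)\|_{W^{1,q}(E)}\big),$$
which is legitimate precisely when $2-p<1/\ov C(q)$, i.e.\ $p>\phi(q)$; the second requirement $p>\phi(2)$ serves to keep the lower-order (gradient) term under control through the $q=2$ instance of the same estimate, i.e.\ through the $W^{2,2}$-bound of Theorem~\ref{W22IN}. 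Passing to the limit in the regularization retains these bounds for the solution of \eqref{PF}.

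Since by Theorem~\ref{W22IN} the right-hand side is bounded uniformly for $t\in(\ve,T)$, I conclude $\sup_{\ve<t<T}\|D^2u(t)\|_q<\infty$, hence $u\in L^\infty(\ve,T;W^{2,q}(E))$; this settles the case $q\ne n$. For the borderline value $q=n$ the estimate above is not directly available (the constant $\ov C(n)$ is not controlled, equivalently the step requiring $\nabla u\in L^\infty$ --- which would need $D^2u\in L^{q}$ for some $q>n$ --- degenerates at $q=n$); instead, if $p>\phi(n)=\inf_{q>n}\{2-1/\ov C(q)\}$ I would choose $q'>n$ with $p>2-1/\ov C(q')=\phi(q')$, run the previous step with $q'$ in place of $q$ to get $u\in L^\infty(\ve,T;W^{2,q'}(E))$, and conclude via the embedding $W^{2,q'}(E)\hookrightarrow W^{2,n}(E)$, valid because $E$ is bounded.

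The main obstacle is not the time-freezing itself but the uniformity in $t$ of the elliptic estimate: one must ensure that the lower-order quantities entering the $W^{2,q}$-bound for $u(t)$ (its $L^q$- and $W^{1,q}$-norms) are themselves bounded on $(\ve,T)$, which is exactly where the coupling with the $L^2$- and intermediate $L^q$-estimates of Theorem~\ref{W22IN}, and the two thresholds $\phi(2),\phi(q)$, genuinely intervene. A secondary but nontrivial point is carrying out the nondivergence/absorption argument on the regularized problems with constants independent of the regularization parameter, so that the bounds survive the passage to the limit.
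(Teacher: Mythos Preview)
Your overall strategy---freeze $t$, read \eqref{PF} as an elliptic $p$-Laplacian problem with datum $g(t)=-u_t(t)$, and invoke elliptic $W^{2,q}$ theory---is exactly the paper's approach, and your handling of $q=n$ (bump up to some $q'>n$, then embed) matches the paper. The paper, however, does not redo the elliptic estimate inline: it applies the ready-made Theorem~\ref{CMThm1.1}, which already packages the absorption argument together with the correct relation between the integrability of the datum and that of $D^2u$. Your nondivergence sketch has a gap precisely here. After dividing by $|\nabla v|^{p-2}$ and taking $L^q$ norms, the forcing contribution is $\big\||\nabla v|^{2-p}g\big\|_q$, not $\|g\|_q$; controlling it either needs $\nabla v\in L^\infty$ (hence $D^2v\in L^r$ for some $r>n$, which is circular when $q\le n$) or, via H\"older, pushes $g$ into a \emph{higher} Lebesgue class. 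Theorem~\ref{CMThm1.1} records exactly this mismatch: data in $L^q$ produce only $W^{2,\hat q}$ with $\hat q=\frac{nq(p-1)}{n-q(2-p)}<q$ when $q<n$. Accordingly, for $q\in[2,n)$ the paper feeds in $u_t\in L^{\ov q}$ with $\ov q=\frac{qn}{n(p-1)+q(2-p)}>q$, checking that $\ov q$ still lies in $\big[2,\,2+\frac{4(p-1)}{(2-p)^2}\big]$ so that Proposition~\ref{ut1} delivers this bound. Your proposal never identifies $\ov q$ and therefore does not close for $2\le q<n$; the displayed bound $\|D^2u(t)\|_q\le C(\|g(t)\|_q+\|u(t)\|_{W^{1,q}})$ is not what the argument actually yields.

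A secondary correction: your reading of the role of $\phi(2)$ is off. Theorem~\ref{W22IN} gives $D^2u\in L^2$ for every $p>\frac{2n}{n+2}$, with no $\phi(2)$ hypothesis; the thresholds $p>\phi(2)$ and $p>\phi(q)$ are the hypotheses under which the elliptic result Theorem~\ref{CMThm1.1} applies---they make the Calder\'on--Zygmund absorption succeed at the relevant exponents---and are not imposed in order to bound lower-order terms through Theorem~\ref{W22IN}.
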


\begin{theorem}\label{W2qexterior}
Let $\O$ be an exterior $C^2$ domain of $\R^n,\ n\ge2$ and $p>\frac{2n}{n+2}$. For any $q\in\left[2,2+\frac{4(p-1)}{(2-p)^2}\right]$, there exists $\ov p(q)<2$ such that if $p\in(\ov p(q),2)$ and $u$ is the unique solution of \eqref{PF} then $D^2u\in L^\infty(\ve,T;L^q(\O))$.
\end{theorem}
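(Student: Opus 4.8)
The plan is to transfer the bounded-domain result of Theorem \ref{W2qbounded} to the exterior domain $\O$ by an invading-domains argument, combining interior $W^{2,q}$ estimates with boundary $W^{2,q}$ estimates localized near $\po$, and then passing to the limit using the uniqueness of the solution. First I would fix $q\in\left[2,2+\frac{4(p-1)}{(2-p)^2}\right]$ and choose $\ov p(q)<2$ so that $p\in(\ov p(q),2)$ forces $p>\max\{\phi(2),\phi(q)\}$ in the sense of Theorem \ref{W2qbounded}; this is possible because $\phi(q)<2$ for every admissible $q$, so the constraint is vacuous as $p\uparrow 2$ and $\ov p(q)$ can be taken as the relevant threshold. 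For $R$ large, let $\O_R=\O\cap B_R$ and let $u_R$ be the unique solution of \eqref{PF} on $\O_R$ with datum $u_\circ$ restricted to $\O_R$ (extended by a cutoff, or simply the datum on the bounded domain $\O_R$). By Theorem \ref{W2qbounded}, $u_R\in L^\infty(\ve,T;W^{2,q}(\O_R))$ for every $\ve>0$.

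The crucial point is to obtain bounds on $\|D^2 u_R\|_{L^\infty(\ve,T;L^q(\O_R))}$ that are uniform in $R$. Here I would split the estimate into two regions. Near the fixed boundary $\po$ (say in $\O\cap B_{R_0}$ for a fixed $R_0$), the boundary $W^{2,q}$ estimates from the bounded-domain theory apply with constants depending only on $\po$ and $R_0$, not on $R$; the relevant elliptic-type boundary regularity inputs (from \cite{CBDV,CM2014,CMmod}) are local. In the interior region and near the artificial boundary $\pa B_R$, I would use interior $W^{2,q}$ estimates together with the global energy-type bounds already available: the $L^\infty(\ve,T;L^2)$ control of $D^2u$ from Theorem \ref{W22IN}, the bound \eqref{Lqu_t} on $u_t$ in $L^q$, and the generalized energy relation \eqref{GER}. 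Rewriting the system as the elliptic equation $-\nabla\cdot(|\nabla u_R|^{p-2}\nabla u_R)=-\partial_t u_R =: f_R$ with $f_R\in L^\infty(\ve,T;L^q)$ uniformly in $R$, one applies the elliptic interior and boundary $W^{2,q}$ estimates for the $p$-Laplacian (the exterior-domain elliptic theory of \cite{CGM}, or its local version) to close the bound with $R$-independent constants. The decay estimates in \eqref{D2L2Omega}--\eqref{Lqu_t} ensure the right-hand sides do not blow up with $R$.

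Once the uniform bound $\sup_R\|D^2 u_R\|_{L^\infty(\ve,T;L^q(\O_R))}<\infty$ is established, I would extract a subsequence $u_R\rightharpoonup v$ weakly-$*$ in $L^\infty(\ve,T;W^{2,q}_{loc}(\O))$, with strong convergence of $\nabla u_R$ in suitable local spaces (via Aubin--Lions, using the $u_t$ bound) so that the nonlinear term $|\nabla u_R|^{p-2}\nabla u_R$ passes to the limit. The limit $v$ is then a solution of \eqref{PF} on $\O$ with datum $u_\circ$ and, by the uniqueness statement invoked throughout the paper, $v=u$. Lower semicontinuity of the norm under weak-$*$ convergence gives $D^2 u\in L^\infty(\ve,T;L^q(\O))$ with the same bound.

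The main obstacle I anticipate is the $R$-uniformity of the $W^{2,q}$ constants near the artificial boundary $\pa B_R$, since a naive application of boundary estimates there would produce constants depending on the geometry of $\O_R$. The way around this is to never place the test/adjoint problem's boundary layer on $\pa B_R$: instead use a cutoff $\zeta_R$ that is $1$ on $B_{R/2}$ and supported in $B_R$, derive the equation satisfied by $\zeta_R u_R$, and estimate $D^2(\zeta_R u_R)$ using only interior elliptic $W^{2,q}$ theory plus the commutator terms, which are controlled by $\|\nabla u_R\|$ and $\|u_R\|$ on the annulus $B_R\setminus B_{R/2}$ — and these are uniformly bounded by \eqref{GER} and the gradient decay estimates. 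Letting $R\to\infty$ along a diagonal sequence in $R/2\to\infty$ then yields the global interior bound, which combined with the fixed-boundary estimate near $\po$ gives the theorem. A secondary technical point is making the choice of $\ov p(q)$ explicit and checking it is $<2$; this follows from $\phi(2)<2$ and $\phi(q)<2$, which in turn rest on $\ov C(q)$ being finite, as in Definition \ref{C(q)}.
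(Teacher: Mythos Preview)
Your approach is substantially different from the paper's, and as written it has a genuine gap. The paper does not truncate $\O$ at all: since Proposition~\ref{ut1} (hence Theorem~\ref{W22IN}) already gives $u_t\in L^\infty(\ve,T;L^q(\O))$ on the exterior domain itself, and Theorem~\ref{existenceexterior} gives $u_t\in L^\infty(\ve,T;(\widehat W_0^{1,p}(\O))')$, one freezes $t$ and applies the \emph{exterior} elliptic result, Theorem~\ref{CGMThm1.1}, with force $f=u_t(t)$. For $q>n$ this delivers $D^2u(t)\in L^q(\O)$ directly, and the threshold $\ov p(q)$ in the statement of Theorem~\ref{W2qexterior} is precisely the $\ov p(r)$ furnished by Theorem~\ref{CGMThm1.1}, not the bounded-domain quantity $\phi(q)$. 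For $2\le q\le n$ the paper picks some $q_1>n$ with $p>\ov p(q_1)$, obtains $D^2u\in L^{q_1}$, and interpolates with the $L^2$ bound of Theorem~\ref{W22IN}. No invading domains, no cutoffs, no compactness argument.

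Your invading-domains route faces two unresolved obstacles. First, the threshold you propose via $\phi(q)$ and the constant in Theorem~\ref{CMThm1.1} both depend on $\ov C(q)$, which by Definition~\ref{C(q)} is attached to the specific bounded domain $E=\O_R$; you have not argued that either is uniform in $R$, and the full norm $\|u\|_{2,\hat q}$ appearing in Theorem~\ref{CMThm1.1} certainly is not domain-independent. Second, and more seriously, the cutoff device does not work for this nonlinear operator: $\zeta_R u_R$ is not a solution of any $p$-Laplacian system, so there is no $p$-Laplacian $W^{2,q}$ estimate to apply to it, and the references \cite{CGM,CM2014} provide global boundary-value estimates, not local interior ones. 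The ``commutator terms'' you plan to control are lower order only for linear operators; here the principal part $|\nabla u_R|^{p-2}\nabla u_R$ does not split under multiplication by $\zeta_R$. The detour is thus both unnecessary---Theorem~\ref{CGMThm1.1} already operates on $\O$---and, as sketched, incomplete.
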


\begin{theorem}\label{EXTIN}Let $\O$ be an exterior $C^2$ domain of $\R^n$. Assume $p\in (\frac{2n}{n+2},\frac{2n}{n+1})$ and $u_\circ\in L^{s_{\rm ex}}(\O)\cap L^s(\OO)$, with $s_{\rm ex}:=n(\frac2p-1)$ and $s>s_{\rm ex}$. Then there exists a solution $u$ of problem\,\eqref{PF}, in the sense of Definition \ref{weaksolutionLs}, which enjoys the extinction property
\be\label{EXT-I}u(t)=0\mbox{ for all }t\geq T_{\rm ex}\ee
where 
$$T_{\rm ex}\leq \frac c{2-p}\|u_\circ\|_\sex^2.$$ If $u_\circ\in L^{s_{\rm ex}}(\O)\cap L^2(\O)$, then the solution $u$ is unique. Moreover, if $p=\frac {2n}{n+1}$ and $u_\circ\in L^1(\O)\cap L^s(\O)$, $s\in (1,2]$, then we get the exponential decay
 \be\label{ESP-I}\|u(t)\|_2\leq \frac c{\ve^\gamma}\|u_\circ\|_s^\alpha e^{-c(t-\ve)\|u_\circ\|_1^{-1/(n+1)}},
 \mbox{ for all }t>\ve>0.\ee\end{theorem}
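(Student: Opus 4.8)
The plan is to build the solution by exploiting the higher-regularity theory already available (Theorems \ref{W22IN}--\ref{W2qexterior}) to make rigorous the formal energy identities, and then to run a differential-inequality argument in $L^\sex$. First I would establish the generalized energy relation \eqref{GER} for the exponent $s=\sex$: approximate $u_\circ\in L^\sex\cap L^s$ by smooth compactly supported data, solve \eqref{PF} on an expanding sequence of bounded domains (or regularize the operator to $(\mu^2+|\nabla u|^2)^{(p-2)/2}$), test the equation with $|u|^{\sex-2}u$, and pass to the limit using the $L^\infty_{loc}(0,T;W^{2,q})$ bounds to control the nonlinear term and recover
\[
\|u(t)\|_\sex^2+c\intll0t\|\nabla u(\tau)\|_{\frac{\sex p}{2}}^p\,d\tau\le\|u_\circ\|_\sex^2.
\]
The key point is that $\frac{\sex p}{2}=\frac{n p}{2}\left(\frac2p-1\right)=\frac{n(2-p)}{2}$, and one checks that with this exponent the Sobolev embedding $W^{1,\frac{\sex p}{2}}\hookrightarrow L^\sex$ holds precisely in the singular range $p\in(\frac{2n}{n+2},\frac{2n}{n+1})$ — indeed $\sex<\frac{\sex p}{2}\cdot\frac{n}{n-\sex p/2}$ reduces exactly to $p<\frac{2n}{n+1}$. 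Hence $\|\nabla u(\tau)\|_{\sex p/2}^p\ge c\,\|u(\tau)\|_\sex^p$ (the boundary condition is used here, via Sobolev on $\R^n$ after extension by zero), so setting $y(t)=\|u(t)\|_\sex^2$ we obtain $y'(t)\le -c\,y(t)^{p/2}$ a.e. Since $p<2$, integrating this Bernoulli-type inequality gives $y(t)^{1-p/2}\le y(0)^{1-p/2}-c(1-p/2)t$, whence $y(t)\equiv0$ for $t\ge T_{\rm ex}$ with
\[
T_{\rm ex}\le\frac{c}{2-p}\,\|u_\circ\|_\sex^{2-p}\cdot\frac{1}{\|u_\circ\|_\sex^{-p}}=\frac{c}{2-p}\|u_\circ\|_\sex^2,
\]
matching the claimed bound. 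Uniqueness under the extra hypothesis $u_\circ\in L^\sex\cap L^2$ follows from the $L^2$-theory already invoked for Theorems \ref{W22IN}--\ref{W2qexterior}: two solutions with the same data agree in the energy class, and the $L^\sex$ solution constructed here coincides with that one by the stability of the approximation.

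For the borderline case $p=\frac{2n}{n+1}$ one has $\sex=1$, the scaling that produced finite-time extinction degenerates, and the plan is to combine an $L^1$ a priori bound with an $L^2$ energy argument. From the (now classical for this system) $L^1$-contraction one gets $\|u(t)\|_1\le\|u_\circ\|_1$ for all $t$. Next, for $t\ge\ve$ the solution is smooth enough (by Theorem \ref{W2qexterior} applied with the appropriate $q$, together with the instantaneous $L^s$-to-$L^2$ smoothing which produces the factor $\ve^{-\gamma}\|u_\circ\|_s^\alpha$) that the $L^2$ energy identity $\frac{d}{dt}\|u(t)\|_2^2=-2\|\nabla u(t)\|_p^p$ holds. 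Now interpolate: by Gagliardo--Nirenberg on $\R^n$ (extension by zero),
\[
\|u\|_2\le c\,\|\nabla u\|_p^{\theta}\,\|u\|_1^{1-\theta},
\]
and with $p=\frac{2n}{n+1}$ a direct computation gives $\theta$ such that $\|\nabla u\|_p^p\ge c\,\|u\|_2^{2}\,\|u\|_1^{-1/(n+1)}$ — this is exactly the exponent appearing in \eqref{ESP-I}. Combining with the $L^1$ bound yields $\frac{d}{dt}\|u(t)\|_2^2\le -c\,\|u_\circ\|_1^{-1/(n+1)}\,\|u(t)\|_2^2$, and Gronwall from the initial time $\ve$ gives
\[
\|u(t)\|_2\le\|u(\ve)\|_2\,e^{-c(t-\ve)\|u_\circ\|_1^{-1/(n+1)}}\le\frac{c}{\ve^\gamma}\|u_\circ\|_s^\alpha e^{-c(t-\ve)\|u_\circ\|_1^{-1/(n+1)}}.
\]

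The main obstacle, as the authors themselves flag, is the weakness of the $L^s$-theory for $s<2$ on an unbounded domain: justifying the energy relation \eqref{GER} with $s=\sex$ requires the approximating solutions to be simultaneously controlled in $L^2$ (to have a well-posed limit object and a uniqueness anchor) and in $L^\sex$ (to close the extinction estimate), which is why the hypothesis retains both $u_\circ\in L^\sex$ and $u_\circ\in L^s$ with $s>\sex$ — the latter providing a reservoir of integrability that, combined with the $W^{2,q}$ smoothing of Theorem \ref{W2qexterior}, upgrades to $L^2$ regularity for positive times. Making the passage to the limit rigorous — in particular, the strong convergence of $\nabla u_k$ needed to identify $|\nabla u|^{p-2}\nabla u$, and the lower semicontinuity that preserves the dissipation term with the sharp constant $c$ independent of $u_\circ$ — is the delicate technical core; everything downstream is the elementary ODE comparison sketched above.
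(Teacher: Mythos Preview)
Your outline is correct and matches the paper's approach closely: the generalized energy relation at $s=\sex$, the critical Sobolev embedding $(\frac{p\sex}{2})^*=\sex$ to close it, the Bernoulli-type ODE comparison for extinction, and for the borderline $p=\frac{2n}{n+1}$ the combination of the $L^1$ bound, the $L^2$ energy identity, Gagliardo--Nirenberg, and Gronwall --- all of this is exactly what the paper does.

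Two technical points where the paper differs from your sketch. First, the paper does \emph{not} test directly at $\sigma=\sex$: it tests on the exterior domain with the localized function $h_R\,u(|u|^2+\delta)^{(\sigma-2)/2}$ for $\sigma\in(\sex,s]$, uses the reverse H\"older inequality to extract $\|\nabla u\|_{p\sigma/2}^p$, and observes that the cut-off error carries a factor $R^{n(2-p)/(\sigma p)-1}$ which is $o(1)$ as $R\to\infty$ \emph{only when} $\sigma>\sex$. The estimate at $\sex$ is then obtained by letting $\sigma\to\sex^+$ via Fatou. Your alternative --- working on expanding bounded domains and testing at $\sex$ there --- would also work, but this is the mechanism actually used. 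Second, the paper never writes the differential inequality $y'\le -cy^{p/2}$; it keeps the integral form $w(t)+c\int_\ve^t w^{p/2}\le w(\ve)$ and runs a comparison argument with the explicit ODE solution $z(t)$, which sidesteps any question of absolute continuity of $t\mapsto\|u(t)\|_\sex$. These are refinements rather than different ideas.
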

\par Theorem\,\ref{EXTIN} furnishes a result typical of the $p$-laplacian parabolic problem, that is the extinction of the solution in a finite time. This property depends on the nature of the domain $\O$ of the IBVP. For $\O$ bounded we refer to DiBenedetto \cite{DB}. The known result in the case of unbounded domains is related to the Cauchy problem see \cite{DB} and \cite{HV}. This case is characterized by the fact that the extinction of the solution holds with initial data belonging to $L^{\sex}(\O)$ with $\sex:=n(\frac2p-1)$. In Theorem\,\ref{EXTIN} we prove this kind of result for $p\in (\frac {2n}{n+2},\frac{2n}{n+1})$. It is important to stress that we need an $L^s$-theory $s<2$ of existence as a key tool in order prove the extinction. This is in harmony with the result of the Cauchy problem. In Theorem \ref{Lsexistence} we develop a $L^s$-theory of existence of solutions which are \underline{regular} for $t>0$.  However we are not able to prove uniqueness unless for initial data $u_\circ\in L^s(\O)\cap L^2(\O)$. We complete this kind of results by proving that in the case $p=\frac{2n}{n+1}$ the solutions admit an exponential decay in time.
 \par We complete the introduction furnishing a generalized energy inequality related to the solutions of the linear IBVP for parabolic systems \eqref{LPP}

\begin{theorem}\label{LPS}Let $\O$ be an exterior domain and $u_\circ\in L^\sigma(\O)$ with $\sigma\in(1,2]$. Then there exists a unique solution to problem \eqref{LPP} such that $u$ is smooth for $t>0$ and 
\be\label{LPS-I} \|u(t)\|_\sigma^2+2(\sigma-1)\int_0^t\|\nabla u(\tau)\|^2_\sigma d\tau \leq \|u_\circ\|_\sigma^2,\mbox{ for all }t>0.\ee
\end{theorem}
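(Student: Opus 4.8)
The plan is to obtain \eqref{LPS-I} by a limiting argument from a bounded-domain approximation, combined with a careful duality/test-function estimate that delivers the precise constant $2(\sigma-1)$. First I would fix an exhausting sequence of bounded $C^2$ subdomains $\O_k\nearrow\O$, solve the linear heat system \eqref{LPP} on each $\O_k$ with datum $u_\circ$ restricted (and cut off near $\po_k$) — here existence, uniqueness and smoothness for $t>0$ are classical parabolic theory — and record uniform bounds. The heart of the matter is the $L^\sigma$ estimate: multiplying the equation by $|u|^{\sigma-2}u$ and integrating over $\O_k$, the time term gives $\frac1\sigma\frac{d}{dt}\|u(t)\|_\sigma^\sigma$, while the elliptic term, after integration by parts (the boundary term vanishes since $u=0$ on $\po_k$), produces
\be\label{LPSstep}
\int_{\O_k}\nabla u\cdot\nabla\bigl(|u|^{\sigma-2}u\bigr)\,dx
=\int_{\O_k}|u|^{\sigma-2}|\nabla u|^2\,dx+(\sigma-2)\int_{\O_k}|u|^{\sigma-2}\Bigl|\nabla|u|\Bigr|^2\,dx\ge (\sigma-1)\int_{\O_k}|u|^{\sigma-2}|\nabla u|^2\,dx,
\ee
using $|\nabla|u||\le|\nabla u|$ and $1<\sigma\le 2$. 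On the other hand $|u|^{\sigma-2}|\nabla u|^2\ge c_\sigma|\nabla(|u|^{\sigma/2})|^2$ and, more to the point for \eqref{LPS-I}, one checks $\|\nabla u(\tau)\|_\sigma^2\le \bigl(\int|u|^{\sigma-2}|\nabla u|^2\bigr)\|u(\tau)\|_\sigma^{2-\sigma}$ by Hölder; dividing the differential inequality by $\|u(t)\|_\sigma^{\sigma-2}$ (legitimate where $u\not\equiv0$, and trivial once it vanishes) converts $\frac1\sigma\frac{d}{dt}\|u\|_\sigma^\sigma$ into $\frac12\frac{d}{dt}\|u\|_\sigma^2$ and yields, after integration in time,
$$\|u(t)\|_\sigma^2+2(\sigma-1)\intll0t\|\nabla u(\tau)\|_\sigma^2\,d\tau\le\|u_\circ\|_\sigma^2$$
on each $\O_k$, uniformly in $k$.

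Next I would pass to the limit $k\to\infty$. The uniform bound gives weak-$*$ convergence of $u_k$ in $L^\infty(0,T;L^\sigma)$ and weak convergence of $\nabla u_k$ in $L^2(0,T;L^\sigma)$; since the system is linear, a standard compactness (Aubin–Lions type, or simply testing against the adjoint heat problem on compact subsets) identifies the limit $u$ as the solution of \eqref{LPP} on $\O$, and smoothness for $t>0$ is inherited from interior parabolic regularity together with the smooth $L^2$-theory near $\po$ (the domain is $C^2$). Weak lower semicontinuity of the norms in \eqref{LPS-I} then preserves the inequality for $u$. Uniqueness on $\O$ follows either from the $L^\sigma$ estimate applied to the difference of two solutions with zero datum, or from the $L^2$-theory combined with the density of $L^\sigma\cap L^2$.

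The main obstacle is not any single estimate but the justification that the $L^\sigma$ energy computation is rigorous for $\sigma<2$: the test function $|u|^{\sigma-2}u$ is only Hölder, not $C^1$, near the zero set of $u$, so \eqref{LPSstep} must be derived via a regularization $(|u|^2+\delta)^{(\sigma-2)/2}u$, deriving the bound with $\delta>0$ and letting $\delta\to0$ by monotone/dominated convergence — with care that the lower bound $(\sigma-1)$ survives the limit (it does, since the two integrands on the right of \eqref{LPSstep} are nonnegative and one discards the favourable $(\sigma-2)$-term's magnitude only to the extent of keeping $(\sigma-1)$). A secondary technical point is the treatment of the initial datum: for $u_\circ\in L^\sigma$ only, one interprets $u(0)=u_\circ$ in the sense of $L^\sigma$-weak continuity at $t=0$ (or $C([0,T);L^\sigma)$ once smoothing for $t>0$ is known), which suffices to run the integration from $0$. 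Finally, although the cleanest route uses the bounded-domain approximation, an alternative is to invoke directly the smooth $L^s$-existence machinery developed for the nonlinear problem (Theorem \ref{Lsexistence}) specialized to $p=2$, and then only the a priori inequality \eqref{LPSstep} needs to be carried out — I would present the self-contained approximation argument, as it keeps the linear case independent of the heavier nonlinear theory.
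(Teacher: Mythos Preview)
Your proposal is correct and the core energy computation---testing against $(|u|^2+\delta)^{(\sigma-2)/2}u$, extracting the factor $(\sigma-1)$, and using the H\"older/reverse-H\"older step $\|\nabla u\|_\sigma^2\le\bigl(\int|u|^{\sigma-2}|\nabla u|^2\bigr)\|u\|_\sigma^{2-\sigma}$ to pass from $\frac{d}{dt}\|u\|_\sigma^\sigma$ to $\frac{d}{dt}\|u\|_\sigma^2$---is exactly the mechanism the paper uses.

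The difference lies in how the exterior domain is handled. You approximate $\O$ by bounded $C^2$ subdomains $\O_k$, derive \eqref{LPS-I} on each $\O_k$, and pass to the limit via weak compactness and lower semicontinuity. The paper instead works directly on $\O$ from the start, multiplying by $u(|u|^2+\delta)^{(\sigma-2)/2}h_R$ with the spatial cut-off $h_R$ of \eqref{hR}, and then literally reruns the computations of the proof of Theorem~\ref{EXTIN} with $p=2$: the cut-off produces an error term on the annulus $\{R<|x|<2R\}$ carrying the factor $R^{\frac{n(2-p)}{\sigma p}-1}$, which for $p=2$ is $R^{-1}$ and vanishes as $R\to\infty$; then $\delta\to0$ and $\ve\to0$ finish. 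So the paper's proof is essentially the one-line remark that the machinery of Section~\ref{extinction} survives at $p=2$, whereas your route rebuilds the estimate from scratch via invading domains. Your approach is more self-contained (as you note) and avoids reliance on the nonlinear sections, at the price of an extra weak-limit identification step; the paper's is shorter because the hard work has already been done. You in fact mention this alternative yourself at the end of the proposal---that is precisely what the authors do.
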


Theorem \ref{LPS} is proved in Section \ref{energyLq}.
\par
The plan of the paper is the following. In Section \ref{not} we introduce the notation, the function spaces, our notion of solution and some results concerning the elliptic problem. In Section \ref{exi} we quote the existence theorem for the parabolic problem on bounded domains furnishing the explicit estimates which are hidden in the original result; further we prove our existence theorem on exterior domains. Section \ref{adj} contains two adjoint parabolic problems which are used in Section \ref{ut} to estimate the time derivative in $L^q(\O)$ by duality. The integrability of the second spatial derivatives is investigated in Section \ref{L2} and Section \ref{Lq}, respectively in $L^2$ and $L^q$, using the elliptic results with $u_t$ acting as a force term. In Section \ref{C1alpha} we obtain the H\"older regularity of the gradient by Sobolev-Morrey embedding results. Section \ref{Lstheory} is entirely devoted to the existence theory with initial data in $L^s(\O)$. In Section \ref{extinction} we investigate the extinction and exponential decay of the solutions. Finally, in Section \ref{energyLq} we apply the methods of Section \ref{extinction} to prove the energy inequality in $L^s(\O)$, with $ 1<s<2$, for linear parabolic IBVP.
\bigskip
\par
{\sc Acknowledgments} - {\small The authors are grateful to the referee who pointed out three critical points in the proof.
\par 
This research is partially supported by MIUR via the PRIN 2015 ``Hyperbolic Systems of Conservations Laws and Fluid Dynamics: Analysis and Applications''. The research activity of F. Crispo and P. Maremonti is performed under the
auspices of National Group of Mathematical Physics (GNFM-INdAM). The research activity of C. R. Grisanti is performed under the
auspices of National Group of Mathematical Analysis, Probability and their Applications (GNAMPA-INdAM)}.
      
\section{Notation and preliminary results}\label{not}

We denote by $\O$ an exterior domain i.e. the complementary of a compact connected set of $\R^n$. In this context, we can find a real number $R_0>0$   such that $(\R^n\setminus\O)\subset B(0,R_0)$. On the other hand, we reserve the letter $E$ for bounded subsets of $\R^n$. In some statements the letter $\O$ is used at the same time for bounded or exterior domains and the occurrence is explicitly enhanced.\par
For any $R>0$ we define a smooth cut-off function $h_R:\R^n\longrightarrow[0,1]$ such that 
\be\label{hR}h_R(x)=\left\{\ba{ll}1&\mbox{if }|x|\le R\\
0&\mbox{if }|x|\ge2R,\ea\right.\qquad |\nabla h_R|\le \frac cR.\ee

Together with the usual Lebesgue, Sobolev and Bochner spaces we also make use of some other suitable spaces in the framework of exterior domains. First the space $\widehat W_0^{1,p}(\O)$ which is the completion of $C^\infty_0(\O)$ in the norm $|\vp|_{1,p}:=\|\nabla\vp\|_p$ and that, in the case of a bounded domain, coincides with $W^{1,p}_0(\O)$. We introduce also the Banach space $V(\O):=\widehat W^{1,p}_0(\O)\cap L^2(\O)$ and the Bochner space $V^{p,p'}(0,T;\O):=\{\psi\in L^p(0,T;V(\O)):\psi_t\in L^{p'}(0,T;V(\O)')\}$ with the norm $\|\psi\|:=\|\psi\|_{L^p(0,T;V(\O))}+\|\psi_t\|_{L^{p'}(0,T;V(\O)')}$ (see \cite[Sec. 23.6]{Zei}). The symbol $\langle\cdot,\cdot\rangle$ stands for the duality pairing between a Banach space and its dual.
\par
We begin with the definition of a quantity which is crucial in most of our results.
\begin{definition}\label{C(q)}
Let $E$ be a bounded $C^2$ set of $\R^n$. For any $q\ge2$ we set
$$\ov C(q)=\sup_{v\in W_0^{1,2}(E)\cap W^{2,q}(E)}\frac{\|D^2 v\|_q}{\|\Delta v\|_q}.$$
\end{definition}
We remark that $\ov C(q)$ is always finite and it is related to the Calder\'on-Zygmund Theorem. Moreover it is possible to show that there exists a constant $K$, not depending on $q$ (but depending on $E$), such that $\ov C(q)\le K q$. For the details see \cite{Yud}.
\par
Let us introduce our notion of solution, which retains more regularity than an ordinary weak solution. We want to focus the attention also on the set of test functions which is chosen in order to apply previous regularity results. In Remark \ref{testsmooth} we state the equivalence with other sets of test functions to which we will switch from time to time, as needed by the context. 

\begin{definition}\label{defnomu}
Let $\O$ be a bounded or exterior domain with boundary of class $C^2$ and $u_{\circ}\in L^2(\O)$. A field
$u\!:(0,T)\times \O\to\R^N$
 is said a solution of
system {\rm \eqref{PF}} if \be\label{ago151} u\in L^{p}(0,T; V(\O))\cap
C([0,T];L^2(\O))\,, \ t^\frac 1p \nabla u\in
L^{\infty}(0,T;L^p(\O))\,,\ee 
\be\label{ago252}u_t\in
L^{p'}(0,T;V(\O)')\,,\ t\,u_t\in L^{\infty}(0,T;L^2(\O))\,,\
t^\frac{p+2}{2p}\,\nabla u_t \in L^{2}(0,T; L^p(\O))\,,\ee
\be\label{testW12}\ba{ll}\dy \vs1\int_0^T [(u,\psi_t)-\left(|\nabla u|^{p-2}\,
\nabla u,\nabla \psi\right)]\,dt=
-(u_\circ, \psi(0)), \qquad \forall \psi\in
C^\infty_0([0,T)\times\O)
\ea\ee and
$$\lim_{t\to 0^+}\|u(t)-u_\circ\|_2=0\,.$$
\end{definition}

\begin{remark}\label{testsmooth}
We observe that, since $u(t)\in C([0,T];L^2(\O))$, by using a suitable cut-off function in time, we obtain that, for any $0\le s<t\le T$
\be\label{testsmoothst}\int_s^t [(u,\psi_\tau)-\left(|\nabla u|^{p-2}\,
\nabla u,\nabla \psi\right)]\,d\tau=
(u(t),\psi(t))
-(u(s), \psi(s)),\quad \forall \psi\in
C^\infty_0([0,T)\times\O).\ee
Moreover,
resorting to a density argument, we can take the test functions 
$\psi$ in the space $W^{1,2}(0,T;L^2(\O))\cap L^p(0,T;V(\O))$, obtaining an equivalent definition of solution which coincides with the one given in \cite{CM}. Always by density (see \cite[Prop. 23.23]{Zei}), $u$ is a solution in the sense of Definition \ref{defnomu}, if and only if, for any $0\le s<t\le T$
 \be\label{testW1p}\ba{ll}\dy \vs1\int_s^t [\langle u,\psi_\tau\rangle-\left(|\nabla u|^{p-2}\,
\nabla u,\nabla \psi\right)]\,d\tau=(u(t),
\psi(t))-(u(s), \psi(s)),\\ \hfill \forall \psi\in
V^{p,p'}(0,T;\O)\quad\mbox{and }u(0)=u_\circ.
\ea\ee
hence we can test the equation with the solution itself. 
\end{remark}

In view of Sections \ref{L2} and \ref{Lq} we report, for the reader's convenience, three results on the regularity of the $p-$Laplacean elliptic system.
If we set
\be\label{rhat}\hat r=\left\{\ba{ll}\dy\vs1\frac{2n}{n(p-1)+2(2-p)}&\mbox{if }n\ge3\\
\dy\mbox{any number in }\left(2,\frac2{p-1}\right)&\mbox{if }n=2.\ea\right.\ee
we have
\begin{theorem}[{\cite[Theorem 1.2]{CGM}}]\label{CGMThm1.2}
Let $\O$ be a $C^2$ bounded or exterior domain of $\R^n$ and $p\in(1,2)$. Assume that $f\in L^{\hat r}(\O)\cap(\widehat W^{1,p}(\O))'$. Then the unique weak solution of the system
\be\label{ellipticPB}\ba{rl}-\nabla\cdot(|\nabla u|^{p-2}\nabla u)=f&\mbox{ in }\O\\
u=0&\mbox{ on }\pa\O\ea\ee
has second derivatives in $L^2(\O)$ and
$$\|D^2 u\|_2\le c\left(\|f\|_{-1,p'}^{\frac1{p-1}}+\|f\|_{\hat r}^{\frac1{p-1}}\right).$$
\end{theorem}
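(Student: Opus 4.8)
\smallskip
\par\noindent\textbf{Proof proposal (sketch).} The strategy is to establish, after a non--degenerate regularisation (or, equivalently, by incremental quotients applied to the true solution), a \emph{weighted} bound for $D^2u$, and then to reorganise the estimate so that the second derivatives reappearing on the right can be reabsorbed; this last step is the heart of the matter and is where the restriction $1<p<2$, through the definition \rf{rhat} of $\hat r$, is used. To begin, the energy identity obtained by testing \rf{ellipticPB} with $u$ gives $\|\nabla u\|_p\le\|f\|_{-1,p'}^{1/(p-1)}$, which accounts for the first term of the claimed inequality. Next one differentiates \rf{ellipticPB} (or takes incremental quotients $D_h^k$) and tests against $\pa_k u$ summed over $k$, so that after one integration by parts only $f$, not $\pa f$, appears on the right, producing in the limit $-\int_\O f\cdot\Delta u$; on the left the ellipticity of the operator for $1<p<2$ yields a term bounded below by a multiple of $\int_\O|\nabla u|^{p-2}|D^2u|^2$ (the degenerate weight $|\nabla u|^{p-2}$ being harmless because $D^2u$ vanishes where $\nabla u$ does). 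The boundary integrals created by the two integrations by parts are controlled by the second fundamental form of the $C^2$ surface $\po$ together with a trace inequality and interpolation; in the exterior case one first multiplies the test function by the cut--off $h_R$ of \rf{hR}, bounds the $O(1/R)$ remainders by the energy, and lets $R\to\infty$. After a Young splitting $|f|\,|D^2u|=\big(|f|\,|\nabla u|^{\frac{2-p}2}\big)\big(|\nabla u|^{\frac{p-2}2}|D^2u|\big)$ and an absorption this gives
\[
\int_\O|\nabla u|^{p-2}|D^2u|^2\,dx\ \le\ c(p)\int_\O|f|^2\,|\nabla u|^{2-p}\,dx .
\]

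Now Hölder's inequality with exponents $\tfrac{\hat r}2$ and $\tfrac{\hat r}{\hat r-2}$ turns the right side into $\|f\|_{\hat r}^2\,\big\||\nabla u|^{2-p}\big\|_{\hat r/(\hat r-2)}$, and \rf{rhat} is exactly the value of $\hat r$ for which the remaining factor equals $\|\nabla u\|_{n/(n-2)}^{2-p}$ when $n\ge3$ (and $\|\nabla u\|_{r(2-p)/2}^{2-p}$ with the free exponent $r$ when $n=2$). The higher integrability of $\nabla u$ needed here is supplied by the weighted bound itself: the field $V:=|\nabla u|^{\frac{p-2}2}\nabla u$ satisfies $|\nabla V|^2\le c(p)\,|\nabla u|^{p-2}|D^2u|^2$ and $\|V\|_2^2=\|\nabla u\|_p^p$, hence $V\in L^{2^*}(\O)$ by Sobolev embedding, with norm governed by the energy and by $\big(\int_\O|\nabla u|^{p-2}|D^2u|^2\big)^{1/2}$; since $|V|=|\nabla u|^{p/2}$, this is an $L^{pn/(n-2)}$ bound for $\nabla u$. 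Feeding it back (and, for the exponents not already covered by $L^p\cap L^{pn/(n-2)}$, using standard $L^q$--estimates for $\nabla u$ for the solution of \rf{ellipticPB} with $f\in L^{\hat r}$) makes $\int_\O|\nabla u|^{p-2}|D^2u|^2$ reappear on the right raised to a power which is $<1$; one absorbs it, obtains a bound for $\int_\O|\nabla u|^{p-2}|D^2u|^2$, and then, splitting $\O$ into $\{|\nabla u|\le1\}$, where $|\nabla u|^{p-2}\ge1$, and $\{|\nabla u|>1\}$, where a further Hölder against the $L^{pn/(n-2)}$ bound suffices, a bound for $\|D^2u\|_2$. The homogeneity $u\mapsto\lambda u$, $f\mapsto\lambda^{p-1}f$ of \rf{ellipticPB} then pins the exponents $\tfrac1{p-1}$ in the estimate; letting the regularisation parameter tend to $0$ and invoking uniqueness of the weak solution concludes.

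The step I expect to be the main obstacle is the reabsorption just described: $D^2u$ enters both sides, and the scheme closes only because the Lebesgue exponent manufactured by the weight $|\nabla u|^{2-p}$ is subcritical for $1<p<2$. A closely related difficulty, special to the exterior domain, is that this Hölder step is performed on a set of infinite measure, so one must secure enough decay of $\nabla u$ at infinity (obtainable from the equation by comparison with the fundamental solution of the $p$--Laplacian, using $f\in L^{\hat r}$) for the argument to go through with constants independent of the truncation radius; a further, more routine, point is the control of the boundary integrals coming from the $C^2$ hypersurface $\po$.
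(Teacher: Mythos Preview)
This theorem is not proved in the paper; it is quoted from \cite{CGM}. Evaluating your sketch on its own terms, there is a genuine gap at the very step you flag as the main obstacle, though the failure mode is not the one you anticipate.

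The passage from the weighted bound $\int_\O|\nabla u|^{p-2}|D^2u|^2\,dx\le C$ to $\|D^2u\|_2\le C'$ does not go through. On $\{|\nabla u|>1\}$ the weight satisfies $|\nabla u|^{p-2}<1$, so the weighted integral is \emph{weaker} than the unweighted one; writing $|D^2u|^2=|\nabla u|^{2-p}\bigl(|\nabla u|^{p-2}|D^2u|^2\bigr)$ and applying H\"older would need the bracketed factor in some $L^a$ with $a>1$, which you do not have. Concretely, take $|\nabla u|\sim M$ and $|D^2u|\sim M^{(2-p)/2}\ve^{-1/2}$ on a set of measure $\ve\sim M^{-pn/(n-2)}$: then $\int|\nabla u|^{p-2}|D^2u|^2$, $\|\nabla u\|_p$ and $\|\nabla u\|_{pn/(n-2)}$ all remain bounded while $\int|D^2u|^2\sim M^{2-p}\to\infty$, so no argument using only those inputs can yield $D^2u\in L^2$. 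There is a second, related issue: your H\"older on $\int|f|^2|\nabla u|^{2-p}$ forces the exponent $n/(n-2)$ for $\nabla u$, and when $p>n/(n-2)$ (any $n\ge5$ with $p$ near $2$) this lies \emph{below} $p$, hence outside the interpolation range $[p,\,pn/(n-2)]$ on an exterior domain; the ``standard $L^q$--estimates'' you invoke there are exactly the non-standard content of \cite{CGM}. A route that does close --- consistent with the role of $\ov C(q)$ in the companion Theorem~\ref{CMThm1.1} --- bypasses the weighted integral: expanding the divergence in \rf{ellipticPB} pointwise gives $|\Delta u|\le|\nabla u|^{2-p}|f|+(2-p)|D^2u|$, and since in $L^2$ one has $\|D^2u\|_2\le\|\Delta u\|_2$ up to curvature terms on $\po$ controlled by the energy, the factor $2-p<1$ is absorbed directly for \emph{every} $p\in(1,2)$. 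The remaining term $\|\,|\nabla u|^{2-p}f\,\|_2$ is bounded, via H\"older with the exponents of \rf{rhat}, by $\|f\|_{\hat r}\|\nabla u\|_{2^*}^{\,2-p}$; the key point is that the exponent appearing is $2^*=2n/(n-2)$, not $n/(n-2)$, and $\|\nabla u\|_{2^*}\le c\|D^2u\|_2$ by Sobolev, which reintroduces $\|D^2u\|_2$ on the right to the sub-unit power $2-p$ and is dispatched by Young.
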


\begin{theorem}[{\cite[Theorem 1.1]{CM2014}}]\label{CMThm1.1}
Let be $E$ a bounded $C^2$ domain of $\R^n,\ n\ge2$, $p\in\left(2-\frac1{\ov C(2)},2\right)$ with $\ov C(2)$ as in Definition \ref{C(q)}. If $f\in L^q(E)$ with $q\ge\frac{2n}{n(p-1)+2(2-p)}$ for $n\ge3$ or $q>2$ for $n=2$ and
$$\hat q=\left\{\ba{ll}\frac{nq(p-1)}{n-q(2-p)}&\mbox{ if }q<n\\
\mbox{any number }<n&\mbox{ if }q=n\\
q&\mbox{ if }q>n\ea\right.$$
then there exists a unique $u$ solution of \eqref{ellipticPB} such that $u\in W_0^{1,\hat q}(E)\cap W^{2,\hat q}(E)$ and
$$\|u\|_{2,\hat q}\le c\|f\|_q^{\frac1{p-1}}.$$
\end{theorem}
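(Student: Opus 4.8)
The plan is to run a bootstrap in which the integrability exponent of $D^2u$ is raised, step by step, from $2$ up to $\hat q$, using at each stage the equation itself together with the Calder\'on--Zygmund inequality for the Laplacian (that is, the constant $\ov C(\cdot)$ of Definition \ref{C(q)}). To legitimise the manipulations one first replaces the operator by its non-degenerate approximations $-\nabla\cdot\big((\mu+|\nabla u_\mu|^2)^{\frac{p-2}{2}}\nabla u_\mu\big)=f$ in $E$, $u_\mu=0$ on $\pa E$, whose solutions are classical (for fixed $\mu>0$ the operator is uniformly elliptic with smooth coefficients, $C^{1,\alpha}(\ov E)$-regularity being available from the classical theory), derives bounds uniform in $\mu\in(0,1]$, and lets $\mu\to0$ by weak compactness, identifying the limit with the unique weak solution; I omit these routine details. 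The base of the induction is Theorem \ref{CGMThm1.2}: since $q\ge\hat r$ when $n\ge3$ (and $q>2$ permits the choice of an admissible $\hat r\le q$ when $n=2$) and $E$ is bounded, the hypotheses of that theorem hold and yield $D^2u_\mu\in L^2(E)$, uniformly in $\mu$, with $\|D^2u_\mu\|_2\le c\|f\|_q^{\frac1{p-1}}$.

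Expanding the divergence and dividing by $|\nabla u|^{p-2}$ (on $\{\nabla u=0\}$ one has $D^2u=0$ a.e. and the inequality below is trivial) one gets the pointwise bound
\be\label{PWplan}|\Delta u|\le|\nabla u|^{2-p}|f|+|p-2|\,|D^2u|\qquad\text{a.e. in }E,\ee
and its counterpart for $u_\mu$ with an extra harmless term $c\,\mu^{\frac{2-p}{2}}|f|$. The Calder\'on--Zygmund inequality $\|D^2u_\mu\|_r\le\ov C(r)\|\Delta u_\mu\|_r$ then gives, after absorbing $|p-2|\,\ov C(r)\|D^2u_\mu\|_r$ on the left (legitimate because the assumption on $p$ keeps $|p-2|\,\ov C(r)<1$ on the range of exponents used),
$$\|D^2u_\mu\|_r\le\frac{\ov C(r)}{1-|p-2|\,\ov C(r)}\Big(c\,\mu^{\frac{2-p}{2}}\|f\|_r+\big\||\nabla u_\mu|^{2-p}f\big\|_r\Big),$$
and by H\"older $\big\||\nabla u_\mu|^{2-p}f\big\|_r\le\|\nabla u_\mu\|_m^{2-p}\|f\|_q$ whenever $\frac1r=\frac{2-p}{m}+\frac1q$.

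Now iterate. If $D^2u_\mu\in L^{r_k}$ uniformly, then $\nabla u_\mu\in W^{1,r_k}(E)\hookrightarrow L^{r_k^*}$ with $\frac1{r_k^*}=\frac1{r_k}-\frac1n$ (any finite exponent for $r_k\ge n$, and $L^\infty$ for $r_k>n$); inserting $m=r_k^*$ above produces $D^2u_\mu\in L^{r_{k+1}}$ with $\frac1{r_{k+1}}=(2-p)\big(\frac1{r_k}-\frac1n\big)+\frac1q$, the lower-order norms being kept under control along the way by the previous steps and the zero boundary condition, so that the power $\frac1{p-1}$ is reproduced at each step. Starting from $r_0=2$ one has $\frac1{r_{k+1}}-\frac1{\hat q}=(2-p)\big(\frac1{r_k}-\frac1{\hat q}\big)$, hence $r_k\uparrow\hat q$ geometrically; in finitely many steps $r_k$ exceeds any prescribed exponent below $\hat q$, and when $q\ge n$ it exceeds $n$, after which $\nabla u_\mu\in L^\infty$ uniformly and one closes the argument at $\hat q=q$ (for $q=n$, stopping at any exponent $<n$ matches the statement). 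Letting $\mu\to0$ yields $D^2u\in L^{\hat q}(E)$ and, using the integrability of $\nabla u$ gained in the bootstrap together with Poincar\'e's inequality, the full bound $\|u\|_{2,\hat q}\le c\|f\|_q^{\frac1{p-1}}$. Uniqueness is standard: two weak solutions tested against their difference give $\int_E(|\nabla u_1|^{p-2}\nabla u_1-|\nabla u_2|^{p-2}\nabla u_2)\cdot\nabla(u_1-u_2)=0$, and strict monotonicity of $\xi\mapsto|\xi|^{p-2}\xi$ forces $u_1=u_2$.

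The main obstacle is the absorption step: it needs quantitative control of $\ov C(r)$ (through $\ov C(r)\le Kr$) all along the iteration, and this is exactly what fixes the admissible neighbourhood of $2$ for $p$. Moreover, for $n\ge3$ and $q<n$, reaching the endpoint exponent $\hat q$ itself — rather than every exponent strictly below it — requires a further limiting argument in which the constants accumulated over the finitely many bootstrap steps must be kept bounded as the intermediate exponents tend to $\hat q$. A secondary technical point is the rigorous justification of \eqref{PWplan} (the behaviour on $\{\nabla u=0\}$) and of the passage $\mu\to0$, namely that the limit of the regularized solutions is the weak solution and inherits the uniform second-order bounds.
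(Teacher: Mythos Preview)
The paper does not prove this statement: it is quoted without proof from \cite{CM2014} as a preliminary result, so there is no ``paper's own proof'' to compare against. Your bootstrap --- approximate with $\mu>0$, start from the $L^2$ estimate of Theorem~\ref{CGMThm1.2}, use the pointwise identity $|\Delta u|\le|\nabla u|^{2-p}|f|+(2-p)|D^2u|$, absorb via $\|D^2u\|_r\le\ov C(r)\|\Delta u\|_r$, and step the exponent up by Sobolev embedding on $\nabla u$ --- is exactly the scheme of \cite{CM2014}, so in that sense your outline matches the source.

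There is, however, a genuine point you gloss over. The absorption at level $r$ requires $(2-p)\,\ov C(r)<1$, i.e.\ $p>2-\frac1{\ov C(r)}$, and you need this at \emph{every} $r\in[2,\hat q]$ visited in the iteration (in particular at $r=\hat q$). The hypothesis recorded in the statement is only $p>2-\frac1{\ov C(2)}$, and since nothing prevents $\ov C(r)$ from growing with $r$ (the paper itself only records $\ov C(r)\le Kr$), the condition at $r=2$ does not imply the condition at larger $r$. You identify this as ``the main obstacle'' but then suggest it merely ``fixes the admissible neighbourhood of $2$ for $p$'', as if the stated hypothesis already accounts for it; it does not. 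In fact the paper's own application of this theorem, in the proof of Theorem~\ref{W2qbounded}, explicitly assumes in addition $p>\phi(q)=2-\frac1{\ov C(q)}$ before invoking it --- which is precisely the missing absorption condition at the top exponent. So either the hypothesis as quoted here understates what the argument in \cite{CM2014} actually needs, or that reference contains an extra device (beyond the bootstrap you describe) that avoids absorption at $r>2$; your write-up should confront this directly rather than leave it as an acknowledged obstacle.
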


\begin{theorem}[{\cite[Theorem 1.1]{CGM}}]\label{CGMThm1.1}
Let $\O$ be a $C^2$ exterior domain of $\R^n,\ n\ge2$. Assume that $f\in L^r(\O)\cap(\widehat W_0^{1,p}(\O))'$, with $r\in(n,\infty)$. Then, there exists $\ov p(r)\in(1,2)$ such that if $p\in(\ov p(r),2)$ there exists a unique solution $u$ of \eqref{ellipticPB} with
$$\|D^2 u\|_r\le c\left(\|f\|_{-1,p'}^{\frac1{p-1}}+\|f\|_r^{\frac1{p-1}}\right).$$
\end{theorem}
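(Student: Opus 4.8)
The plan is to produce the weak solution by monotone operator theory and then bootstrap its regularity to $W^{2,r}$ by reading \eqref{ellipticPB} as a perturbed Poisson equation whose perturbation is small because $p$ is close to $2$. Since $f\in(\widehat W_0^{1,p}(\O))'$, the operator $v\mapsto-\nabla\cdot(|\nabla v|^{p-2}\nabla v)$ is strictly monotone, coercive and hemicontinuous on $\widehat W_0^{1,p}(\O)$, so there is a unique weak solution $u\in\widehat W_0^{1,p}(\O)$; testing with $u$ gives the energy bound $\|\nabla u\|_p\le\|f\|_{-1,p'}^{1/(p-1)}$. To reach $D^2u\in L^r$ I would argue on the invading bounded domains $\O_R=\O\cap B_{2R}$: for $p$ close to $2$, Theorem \ref{CMThm1.1} applied with $q=r>n$ (so that $\hat q=r$) furnishes a solution $u_R\in W^{2,r}(\O_R)$, which guarantees $\|D^2u_R\|_r<\infty$ and so legitimizes the manipulations and absorptions below. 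The real task is an a priori bound for $\|D^2u_R\|_r$ independent of $R$, after which one passes to the limit and identifies the limit with $u$ by uniqueness.

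Where $\nabla u_R\ne0$ (on the complementary set $D^2u_R=0$ a.e.) one expands the divergence to rewrite the system componentwise as
\[
\Delta u_R^k=-|\nabla u_R|^{2-p}f^k-(p-2)|\nabla u_R|^{-2}\big(\partial_l u_R^m\,\partial_j\partial_l u_R^m\big)\,\partial_j u_R^k,
\]
so that $|\Delta u_R|\le|\nabla u_R|^{2-p}|f|+|p-2|\,|D^2u_R|$ by Cauchy--Schwarz. Feeding this into the Calder\'on--Zygmund inequality $\|D^2u_R\|_r\le\ov C(r)\|\Delta u_R\|_r$ in the sense of Definition \ref{C(q)} (with $\ov C(r)$ the constant for $\O_R$, shown below to be bounded uniformly in $R$) and absorbing the last term requires $\ov C(r)|p-2|<1$: this is precisely the origin of the threshold $\ov p(r):=2-\frac1{\ov C(r)}$, and for $p\in(\ov p(r),2)$ it yields $\|D^2u_R\|_r\le\frac{\ov C(r)}{1-\ov C(r)|p-2|}\,\||\nabla u_R|^{2-p}f\|_r$.

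It then remains to estimate $\||\nabla u_R|^{2-p}f\|_r\le\|\nabla u_R\|_\infty^{2-p}\|f\|_r$. As $r>n$, the Gagliardo--Nirenberg inequality gives $\|\nabla u_R\|_\infty\le c\,\|D^2u_R\|_r^{\theta}\,\|\nabla u_R\|_p^{1-\theta}$ with $\theta=\frac{1/p}{1/p+1/n-1/r}\in(0,1)$. Since $\theta(2-p)<1$ automatically, Young's inequality absorbs the factor $\|D^2u_R\|_r^{\theta(2-p)}$ into the left-hand side, leaving $\|D^2u_R\|_r\le c\big(\|\nabla u_R\|_p^{(1-\theta)(2-p)}\|f\|_r\big)^{1/(1-\theta(2-p))}$. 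Inserting the energy bound $\|\nabla u_R\|_p\le\|f\|_{-1,p'}^{1/(p-1)}$ produces a product $\|f\|_{-1,p'}^{a}\|f\|_r^{b}$ whose exponents satisfy $a+b=\frac1{p-1}$, as forced by the homogeneity $f\mapsto\lambda f$, $u\mapsto\lambda^{1/(p-1)}u$; the elementary inequality $X^aY^b\le X^{a+b}+Y^{a+b}$ then gives exactly $\|D^2u_R\|_r\le c\big(\|f\|_{-1,p'}^{1/(p-1)}+\|f\|_r^{1/(p-1)}\big)$, uniformly in $R$.

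I expect the main obstacle to lie in making the Calder\'on--Zygmund step uniform in $R$ and valid up to the boundary, since the constant in Definition \ref{C(q)} is attached to the moving domain $\O_R$. I would localize with the cut-off $h_{R_0}$ of \eqref{hR} adapted to a fixed neighborhood of the compact boundary $\po$: on $\{h_{R_0}=0\}$ the domain coincides with the exterior of a ball, where whole-space (or, after flattening, half-space) Calder\'on--Zygmund estimates apply with a constant depending only on $n$ and $r$; on the fixed collar $\{h_{R_0}>0\}$ one uses the bounded-domain constant, now $R$-independent. The commutators generated by $\nabla h_{R_0}$ are of lower order, supported in a fixed annulus, and are controlled there by $\|\nabla u_R\|_p$, which the energy estimate already bounds. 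This produces an $R$-independent effective Calder\'on--Zygmund constant $\ov C(r)$, hence a threshold $\ov p(r)<2$, after which the absorption argument and the limit $R\to\infty$ go through.
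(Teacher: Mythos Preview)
This theorem is not proved in the present paper at all: it is one of the three elliptic results quoted without proof in Section~\ref{not} ``for the reader's convenience'' from \cite{CGM}. There is therefore no proof in this paper to compare your attempt against; the argument lives entirely in \cite{CGM}.

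That said, your sketch captures the mechanism one expects in \cite{CGM} and that is already visible in the bounded-domain companion Theorem~\ref{CMThm1.1}: rewrite the system as $\Delta u = -|\nabla u|^{2-p}f - (p-2)|\nabla u|^{-2}(\nabla u\otimes\nabla u):D^2u$, apply the Calder\'on--Zygmund inequality, and absorb the $(p-2)$-term provided $|p-2|\,\ov C(r)<1$, which is exactly the origin of the threshold $\ov p(r)$. Your treatment of the remaining factor via $\|\nabla u\|_\infty$ and Gagliardo--Nirenberg, followed by Young and the homogeneity check $a+b=\frac1{p-1}$, is the right closing move.

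Two points would need tightening if you actually wrote this out. First, the Gagliardo--Nirenberg step on the invading domains $\O_R$ carries a constant that in general depends on $R$ (or requires a lower-order additive term), so you must either run it on $\R^n$ after a controlled extension, or argue directly on the exterior domain with a localization near $\partial\O$ as you suggest for the Calder\'on--Zygmund part; this is the genuine technical content of working in an exterior domain and is precisely what \cite{CGM} has to supply. Second, identifying the limit of $u_R$ with the unique weak solution $u$ requires care because $u_R$ satisfies an artificial Dirichlet condition on $\partial B_{2R}$; one typically controls the discrepancy via the energy estimate and the decay at infinity implied by $\nabla u\in L^p(\O)$.
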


We observe that, by Remark \ref{testsmooth}, the notion of solution used in the above results (see \cite{CGM,CM2014}) is compatible with the one given in Definition \ref{defnomu}.

We end this section with a ``reverse'' version of the H\"older inequality (\cite[Theorem 2.12]{ADAFOU})

\begin{inequality}\label{reverse}Let $0<p<1$ and $p'=\frac p{p-1}$. If $f\in L^p(\O)$ and $0<\int_\O|g(x)|^{p'}\,dx<\infty$ then
$$\int_\O\left|f(x)g(x)\right|\,dx\ge\left(\int_\O\left|f(x)\right|^p\,dx\right)^{1/p}\left(\int_\O\left|g(x)\right|^{p'}\,dx\right)^{1/p'}.$$
\end{inequality}

\section{Existence results}\label{exi}

In the case of a bounded domain we quote here the following result taken from \cite[Theorem 1.1]{CM}. The statement is not exactly as the original one, where
the quantitative estimates are not present. They are somehow hidden in the proof and we want to make them explicit since we need them in view of the corresponding result in the case of an exterior domain. 

\begin{theorem}\label{existencebounded}
Let $E$ be a bounded $C^2$ subset of $\R^n$ and $u_0\in L^2(E)$. Then, for any $p\in(1,2)$, there exists a unique solution of problem \eqref{PF} in the sense of Definition \ref{defnomu}. Moreover we have the following estimates with constants $c$ not depending on $|E|$
\begin{align}\label{LinfinityL2}\vs1\|u(t)\|_2^2&\le2\|u_\circ\|_2^2&\mbox{for a.e. } t\in[0,T],\\
\label{nablauLinfinityLp}\vs1t^{\frac1p}\|\nabla u(t)\|_p&\le \|u_\circ\|_2^{\frac2p}&\mbox{for a.e. } t\in[0,T],\\
\label{LinftyW-1p'estimate}\vs1t^{\frac1{p'}}\|u_t(t)\|_{-1,p'}&\le c\|u_\circ\|_2^{\frac2{p'}}&\mbox{for a.e. }t\in[0,T],\\
\label{tu_tLinfinityL2}\vs1t\|u_t(t)\|_2&\le c\|u_\circ\|_2&\mbox{for a.e. }t\in[0,T],\\
\label{LpLp}\dy\vs1\int_0^T\|\nabla u(t)\|_p^p\,dt&\le \|u_\circ\|_2^2,\\
\label{tp+2nablaut}\dy\int_0^Tt^{\frac{p+2}2}\|\nabla u_t(t)\|_p^2\,dt&\le c\|u_\circ\|_2^{\frac4p}.\end{align}
\end{theorem}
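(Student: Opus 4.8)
The plan is to obtain Theorem~\ref{existencebounded} for bounded domains essentially by re-reading the proof of \cite[Theorem~1.1]{CM}, making the constants quantitative, and then to derive the exterior-domain case (in later sections) by an invading-domains limit using precisely the $|E|$-independence of these constants. Here I concentrate on how I would extract the six displayed estimates.

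First I would construct the solution by a Galerkin or monotonicity scheme and work with the smooth approximations, so that all manipulations below are justified. The basic energy identity is obtained by testing \eqref{PF} with $u$ itself (legitimate by \eqref{testW1p}): $\frac12\frac{d}{dt}\|u(t)\|_2^2+\|\nabla u(t)\|_p^p=0$. Integrating on $(0,t)$ gives at once $\|u(t)\|_2^2\le\|u_\circ\|_2^2$ and $\int_0^T\|\nabla u\|_p^p\,dt\le\|u_\circ\|_2^2$, which are \eqref{LpLp} and a sharper form of \eqref{LinfinityL2} (the factor $2$ in \eqref{LinfinityL2} is there to absorb the approximation error and causes no trouble). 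For \eqref{nablauLinfinityLp} the standard device is to test with $u_t$: $\|u_t(t)\|_2^2+\frac1p\frac{d}{dt}\|\nabla u(t)\|_p^p=0$, so $t\mapsto\|\nabla u(t)\|_p^p$ is non-increasing; multiplying the energy identity instead by a weight and integrating, one gets $t\,\|\nabla u(t)\|_p^p\le\int_0^t\|\nabla u(\tau)\|_p^p\,d\tau\le\|u_\circ\|_2^2$, i.e. $t^{1/p}\|\nabla u(t)\|_p\le\|u_\circ\|_2^{2/p}$, which is \eqref{nablauLinfinityLp}. The crucial point is that none of these constants see $|E|$, because no Poincar\'e or Sobolev inequality has been used — only the equation and H\"older's inequality with the natural pairing.

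Next, \eqref{LinftyW-1p'estimate}: from the equation $u_t=\nabla\cdot(|\nabla u|^{p-2}\nabla u)$ one has $\|u_t(t)\|_{-1,p'}\le\||\nabla u(t)|^{p-1}\|_{p'}=\|\nabla u(t)\|_p^{p-1}$, and combining with \eqref{nablauLinfinityLp} gives $t^{1/p'}\|u_t(t)\|_{-1,p'}\le\|u_\circ\|_2^{2/p'}$, again $|E|$-free. For \eqref{tp+2nablaut} one differentiates the equation in time (at the Galerkin level) and tests with $u_t$, producing $\frac12\frac{d}{dt}\|u_t\|_2^2+c\int|\nabla u|^{p-2}|\nabla u_t|^2\le0$ after using the monotonicity/strong-monotonicity of the $p$-Laplacian; then Young's inequality bounds $\|\nabla u_t\|_p^2$ by $\int|\nabla u|^{p-2}|\nabla u_t|^2$ times a power of $\|\nabla u\|_p$, and a careful choice of the time weight $t^{(p+2)/2}$ together with the already-established bounds on $\|\nabla u\|_p$ and on $\|u_t\|_2$ yields $\int_0^T t^{(p+2)/2}\|\nabla u_t\|_p^2\,dt\le c\|u_\circ\|_2^{4/p}$. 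Estimate \eqref{tu_tLinfinityL2} is the most delicate: one wants $t\,u_t\in L^\infty(0,T;L^2)$, which requires a weighted $L^\infty$-in-time bound on $u_t$; the natural route is to test the time-differentiated equation with $t^2 u_t$, integrate, and use \eqref{tp+2nablaut} together with the monotonicity term to close, exactly as in \cite{CM}.

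The main obstacle is the honest bookkeeping in \eqref{tu_tLinfinityL2} and \eqref{tp+2nablaut}: one must differentiate in time, which is only rigorous for the regularized problem, and then pass to the limit keeping track that every constant depends only on $p$, $n$ and the $C^2$-character (not on $|E|$); the singular weight $|\nabla u|^{p-2}$ with $p<2$ means the ``second energy'' term is degenerate where $\nabla u$ is large, so the Young-inequality interpolation must be done so that the power of $\|\nabla u\|_p$ that appears is controlled by \eqref{nablauLinfinityLp}. Since all of this is already carried out in \cite{CM}, the task reduces to citing that proof and recording the constants; the only genuinely new remark needed for the exterior case is that the cut-off function $h_R$ from \eqref{hR} does not disturb these estimates in the limit $R\to\infty$, which is why the $|E|$-independence has been emphasized.
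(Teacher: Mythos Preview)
Your approach is the paper's approach, and the first four steps (energy identity, test with $u_t$, the weighted gradient bound, and the $W^{-1,p'}$ estimate from the equation) are exactly right. However, your logical ordering of \eqref{tu_tLinfinityL2} and \eqref{tp+2nablaut} is inverted, and as written the sketch for \eqref{tu_tLinfinityL2} does not close.

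The point is that when you multiply the identity $\|u_t\|_2^2+\tfrac1p\tfrac{d}{dt}\|\nabla u\|_p^p=0$ by $t$ and integrate, you obtain not only $t\|\nabla u(t)\|_p^p\le\|u_\circ\|_2^2$ but simultaneously the integral bound $\int_0^t\tau\|u_\tau\|_2^2\,d\tau\le\|u_\circ\|_2^2$. This integral bound is what closes \eqref{tu_tLinfinityL2}: after differentiating in $t$, testing with $u_t$, and multiplying by $t^2$, the right-hand side you must absorb is precisely $2\int_0^t\tau\|u_\tau\|_2^2\,d\tau$, not anything controlled by \eqref{tp+2nablaut}. So \eqref{tu_tLinfinityL2} comes first and is in fact the easier of the two; there is nothing delicate about it once the integral bound on $\tau\|u_\tau\|_2^2$ is in hand. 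Estimate \eqref{tp+2nablaut} then follows from the byproduct $\int_0^t\tau^2\bigl\||\nabla u|^{(p-2)/2}\nabla u_t\bigr\|_2^2\,d\tau\le c\|u_\circ\|_2^2$ of that same computation, via H\"older's inequality with exponents $\tfrac2p,\tfrac2{2-p}$ (not Young), writing $|\nabla u_t|^p=|\nabla u|^{p(p-2)/2}|\nabla u_t|^p\,|\nabla u|^{p(2-p)/2}$ and controlling the extra factor $(t\|\nabla u\|_p^p)^{(2-p)/p}$ by \eqref{nablauLinfinityLp}. With this reordering your plan is correct and matches the paper.
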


\begin{proof}
The proof is based on a two steps approximation of the singular system via parabolic systems depending on two parameters. Furthermore the authors use the Faedo-Galerkin approximation method with smooth initial data and then they pass to the limit by density. It results that the estimates depend on four parameters and the passage to the limit has to be carefully managed. It is of no interest to replicate here the actual existence proof but, for the reader convenience, we perform only the formal computations treating the solution as it was smooth enough. We refer to the original paper \cite[Appendix]{CM} for the rigorous proof.\par
We begin with the classical energy estimate to get \eqref{LinfinityL2} and \eqref{LpLp}. We fix $s\in(0,T]$ and we multiply \eqref{PF}$_1$ by $u$. Integration in time and space gives
\be\label{energy}\frac12\|u(s)\|_2^2+\int_0^{s}\|\nabla u(t)\|_p^p\,dt\le \|u_\circ\|_2^2\ee
Now we multiply \eqref{PF}$_1$ by $u_t$ and integrate over $E$
$$\|u_t(t)\|_2^2+\frac 1p\frac d{dt}\|\nabla u(t)\|_p^p=0.$$
Multiplying by $t$ the above equation we get
$$t\|u_t\|_2^2+\frac1p\frac d{dt}\left(t\|\nabla u\|_p^p\right)=\frac1p\|\nabla u\|_p^p$$
and integrating this identity over $(0,s)$
\be\label{LinftyW1p}\int_0^{s}t\|u_t(t)\|_2^2\,dt+s\|\nabla u(s)\|_p^p\le\int_0^{s}\|\nabla u(t)\|_p^p\,dt.\ee
Hence by \eqref{LinftyW1p} and \eqref{energy}
\be\label{L2tu_t}\int_0^{s}t\|u_t(t)\|_2^2\,dt+s\|\nabla u(s)\|_p^p\le\|u_\circ\|_2^2\ee
which gives \eqref{nablauLinfinityLp}.
\par
Let us differentiate \eqref{PF} with respect to $t$ getting
$$u_{tt}-\nabla\cdot\left((p-2)|\nabla u|^{p-4}(\nabla u\otimes\nabla u)\cdot\nabla u_t+|\nabla u|^{p-2}\nabla u_t\right)=0.$$
Multiplying the above identity by $u_t$ and integrating over $E$ we obtain
$$\frac12\frac d{dt}\|u_t\|_2^2+\left\| |\nabla u|^{\frac{p-2}2}\nabla u_t\right\|_2^2\le(2-p)\left\| |\nabla u|^{\frac{p-2}2}\nabla u_t\right\|_2^2$$
and, multiplying by $t^2$
$$\frac12\frac d{dt}\left(t^2\|u_t\|_2^2\right)-t\|u_t\|_2^2+(p-1)t^2\left\| |\nabla u|^{\frac{p-2}2}\nabla u_t\right\|_2^2\le 0.$$
Finally, integrating in time over $(0,s)$, using \eqref{LinftyW1p} and \eqref{energy} we achieve
\be\label{gradugradut}\ba{l}\dy\vs1s^2\|u_s(s)\|_2^2+2(p-1)\int_0^s t^2\left\| |\nabla u|^{\frac{p-2}2}\nabla u_t\right\|_2^2\,dt\le 2\int_0^st\|u_t\|_2^2\,dt\\
\hfill\dy\le2\int_0^s\|\nabla u\|_p^p\,dt\le 2\|u_\circ\|_2^2\ea\ee
and \eqref{tu_tLinfinityL2} is proved.\par
By the definition of negative Sobolev norm and using estimate \eqref{L2tu_t} in \eqref{PF} we get
$$\|u_s(s)\|_{-1,p'}=\|\nabla\cdot\left(|\nabla u(s)|^{p-2}\nabla u(s)\right)\|_{-1,p'}\le\|\nabla u(s)\|_p^{p-1}\le  \frac c{s^{\frac{p-1}p}}\|u_\circ\|_2^{\frac{2(p-1)}p}$$
that gives \eqref{LinftyW-1p'estimate}.
\par
Concerning estimate \eqref{tp+2nablaut}, by H\"older's inequality with exponent $\frac2p,\ \frac2{2-p}$, using \eqref{L2tu_t} and \eqref{gradugradut}, we have
$$\ba{l}\dy\vs1\int_0^st^{\frac{p+2}p}\|\nabla u_t\|_p^2\,dt
=\int_0^s t^{\frac{2-p}p}\,t^2\left(\int_E|\nabla u|^{\frac{p(p-2)}2}|\nabla u_t|^p|\nabla u|^{\frac{p(2-p)}2}\,dx\right)^{\frac2p}\,dt\\
\hfill\dy\vs1\le\int_0^s t^2\left(\int_E|\nabla u|^{p-2}|\nabla u_t|^2\,dx\right)t^{\frac{2-p}p}\left(\int_E|\nabla u|^p\,dx\right)^{\frac{2-p}p}\,dt\\
\hfill\dy=\int_0^st^2\left\||\nabla u|^{\frac{p-2}2}\nabla u_t\right\|_2^2\left(t\|\nabla u\|_p^p\right)^{\frac{2-p}p}\,dt\le c\|u_\circ\|_2^2\|u_\circ\|_2^{\frac{2(2-p)}p}=c\|u_\circ\|_2^{\frac4p}.
\ea$$
\end{proof}

\begin{theorem}\label{existenceexterior}
The same results of Theorem \ref{existencebounded} hold true for an exterior $C^2$ domain.
\end{theorem}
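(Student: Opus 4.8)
The plan is an exhaustion (invading domains) argument that transfers the bounded-domain statement of Theorem~\ref{existencebounded} to $\O$; the decisive point is that every constant there is independent of the measure of the domain. Since $\R^n\setminus\O\subset B(0,R_0)$, for every integer $k>R_0$ the set $E_k:=\O\cap B(0,k)$ is a bounded $C^2$ domain (its boundary is the disjoint union of $\pa\O$ and $\pa B(0,k)$), and $\{E_k\}$ invades $\O$. Put $u_{\circ,k}:=u_\circ|_{E_k}$, so that $\|u_{\circ,k}\|_2\le\|u_\circ\|_2$, and let $u^k$ be the unique solution of \eqref{PF} on $(0,T)\times E_k$ furnished by Theorem~\ref{existencebounded}. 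Extending each $u^k$ by zero to $(0,T)\times\O$ and recalling that the constants in \eqref{LinfinityL2}--\eqref{tp+2nablaut} do not depend on $|E_k|$, the sequence $\{u^k\}$ is bounded, uniformly in $k$, in $L^\infty(0,T;L^2(\O))\cap L^p(0,T;V(\O))$, while $t^{1/p}\nabla u^k$, $t\,u^k_t$ and $t^{(p+2)/(2p)}\nabla u^k_t$ are bounded in $L^\infty(0,T;L^p(\O))$, $L^\infty(0,T;L^2(\O))$ and $L^2(0,T;L^p(\O))$, respectively. Moreover, on every fixed bounded set $\O\cap B(0,M)$ (with $k>M$) the restriction of $u^k$ solves the system, so $u^k_t=\nabla\cdot(|\nabla u^k|^{p-2}\nabla u^k)$ there and, since $\|\nabla u^k\|_{L^p((0,T)\times(\O\cap B(0,M)))}$ is bounded uniformly in $k$ by \eqref{LpLp}, $u^k_t$ is bounded, uniformly in $k$, in $L^{p'}(0,T;Y(\O\cap B(0,M)))$ for a suitable negative-order Sobolev space $Y$.

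Passing to a subsequence I may assume $u^k\rightharpoonup u$ weakly-$*$ in all the spaces just listed and, by Aubin--Lions type compactness on each $\O\cap B(0,M)$ (from the two previous bounds), $u^k\to u$ strongly in $L^p((0,T)\times(\O\cap B(0,M)))$ and a.e.\ in $(0,T)\times\O$. The delicate step is to identify the weak limit of the nonlinear term $|\nabla u^k|^{p-2}\nabla u^k$, which is bounded in $L^{p'}((0,T)\times\O)$ and hence converges weakly there to some $\chi$: I would show $\chi=|\nabla u|^{p-2}\nabla u$ by a Minty-type monotonicity argument localised through the cut-off $h_M$ of \eqref{hR}, testing the equation for $u^k$ with $h_M^2(u^k-u)$, passing to the limit with the aid of the strong $L^p_{\rm loc}$ and a.e.\ convergence of $u^k$ (which kill the lower-order and boundary contributions), and then invoking the strict monotonicity of $\xi\mapsto|\xi|^{p-2}\xi$; this yields $\nabla u^k\to\nabla u$ a.e., hence $|\nabla u^k|^{p-2}\nabla u^k\to|\nabla u|^{p-2}\nabla u$ in $L^{p'}_{\rm loc}$. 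Since the test functions in \eqref{testW12} are compactly supported in $\O$, this suffices to pass to the limit in the weak formulation, so $u$ solves \eqref{PF}.

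It remains to check that $u$ has all the regularity of Definition~\ref{defnomu} and satisfies the quantitative estimates. The memberships \eqref{ago151}, \eqref{ago252} and the bounds \eqref{LinfinityL2}--\eqref{tp+2nablaut} follow from weak (or weak-$*$) lower semicontinuity of the relevant norms, the limits being pinned down by the already identified distributional limit of $u^k$; in particular, from the weak formulation just obtained one has $\|u_t(\tau)\|_{V(\O)'}\le\|\nabla u(\tau)\|_p^{p-1}$ for a.e.\ $\tau$, so \eqref{LpLp} for $u$ (again by lower semicontinuity) and $(p-1)p'=p$ give $u_t\in L^{p'}(0,T;V(\O)')$. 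As $V(\O)\hookrightarrow L^2(\O)\hookrightarrow V(\O)'$ is a Gelfand triple, $u\in V^{p,p'}(0,T;\O)$ implies $u\in C([0,T];L^2(\O))$; and since $u_{\circ,k}\to u_\circ$ in $L^2(\O)$ while each $u^k$ attains its datum, one obtains $u(0)=u_\circ$ and $\|u(t)-u_\circ\|_2\to0$ as $t\to0^+$, so $u$ is a solution in the sense of Definition~\ref{defnomu}.

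Finally, uniqueness. If $u_1,u_2$ solve \eqref{PF} in the sense of Definition~\ref{defnomu}, then $u_1-u_2\in V^{p,p'}(0,T;\O)$ and, by Remark~\ref{testsmooth}, we may insert $\psi=u_1-u_2$ in \eqref{testW1p} for each of them; subtracting, and using $\int_0^t\langle(u_1-u_2)_\tau,u_1-u_2\rangle\,d\tau=\tfrac12\|(u_1-u_2)(t)\|_2^2$ (Gelfand triple) together with $u_1(0)=u_2(0)=u_\circ$, we get
$$\tfrac12\|(u_1-u_2)(t)\|_2^2+\int_0^t\big(|\nabla u_1|^{p-2}\nabla u_1-|\nabla u_2|^{p-2}\nabla u_2,\ \nabla(u_1-u_2)\big)\,d\tau=0,$$
and monotonicity of the $p$-Laplacian forces the integral to be nonnegative, whence $u_1\equiv u_2$. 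The main obstacle throughout is the passage to the limit in the nonlinear term: weak convergence of the gradients does not suffice, and in an exterior domain there is no global compactness, so one must argue locally --- which is legitimate here precisely because the test functions in \eqref{testW12} have compact support.
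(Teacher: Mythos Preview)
Your overall architecture --- invade $\O$ by the bounded domains $E_k$, transfer the domain-independent estimates \eqref{LinfinityL2}--\eqref{tp+2nablaut}, pass to weak limits, identify the nonlinear term, and recover the quantitative bounds by lower semicontinuity --- matches the paper's. The genuine divergence is in how you identify $\chi=|\nabla u|^{p-2}\nabla u$. The paper runs the classical global Minty--Browder trick: it uses the energy identity on $E_k$ to rewrite $\int_0^T(|\nabla u^k|^{p-2}\nabla u^k,\nabla u^k)\,dt$, then tests the \emph{limit} equation with $u\,h_R$, controls the commutator $u\otimes\nabla h_R$ via Hardy's inequality (legitimate since $p<2$), lets $R\to\infty$, and concludes by monotonicity and weak lower semicontinuity of $\|u^k(T)\|_2$. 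You instead invoke local Aubin--Lions compactness to get $u^k\to u$ strongly in $L^p_{\rm loc}$, then test the \emph{approximate} equation with $h_M^2(u^k-u)$ and use strict monotonicity to force $\nabla u^k\to\nabla u$ a.e. Your route avoids Hardy and yields the stronger intermediate conclusion of a.e.\ gradient convergence, but it is heavier on the time-derivative term: to control $\int_0^T\langle u^k_t,h_M^2(u^k-u)\rangle\,dt$ you must first know $u_t\in L^{p'}(0,T;V(\O)')$ and $u(0)=u_\circ$ (both obtainable from the unidentified limit equation $u_t=\nabla\cdot\chi$), split off $\langle(u^k-u)_t,h_M^2(u^k-u)\rangle$, and handle the resulting boundary terms at $t=0,T$ --- the strong $L^p_{\rm loc}$ convergence of $u^k$ alone does not kill this term, so your phrase ``lower-order and boundary contributions'' hides nontrivial work. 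The paper's argument is shorter precisely because Hardy is available in the singular range. Your explicit uniqueness argument is correct and is the standard one; the paper does not reprove it, relying on the bounded-domain statement.
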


\begin{proof}
To prove the thesis for an exterior domain we define a sequence of bounded sets invading $\O$. For any $k\in\N, \ k>R_0$, let be $u^k$ the unique solution of problem \eqref{PF} on $E_k:=\O\cap B(0,k)$ in place of $E$. 
First we extend $u^k$ to $0$ in $[0,T]\times(\O\setminus E_k)$ obtaining a function defined in $[0,T]\times\O$. We remark that the estimates \eqref{LinfinityL2}-\eqref{tp+2nablaut} in Theorem \ref{existencebounded} do not depend on the measure of the domain, hence we can consider all the norms computed on $\O$ instead of $E_k$.\par
Let be $k_0$ the smallest integer greater than $R_0$ and consider the sequence $\{u^k\}_{k\ge k_0}$.

By \eqref{LinfinityL2}, \eqref{tu_tLinfinityL2} and \eqref{LpLp} we can extract a subsequence (not relabeled) such that
\be\label{weakLinfinityL2}u^k\mathop{\rightharpoonup}^* u\qquad\mbox{weakly-}^* \mbox{ in }L^\infty(0,T;L^2(\O)),\ee
\be\label{weakLpV}u^k\rightharpoonup u\qquad\mbox{weakly in }L^p(0,T;V(\O)),\ee
\be\label{L2aet}u^k(t)\rightharpoonup u(t)\qquad\mbox{weakly in }L^2(\O)\ \mbox{for a.e. }t\in[0,T].\ee
\par
Let us fix $t\in(0,T]$ and $\psi\in V^{p,p'}(0,T;\O)$. 
By the weak convergences \eqref{weakLpV} and \eqref{L2aet} we get at once
\be\label{weaku_t}\int_0^t\left\langle u^k(s),\psi_s(s)\right\rangle\,ds\longrightarrow\int_0^t\left\langle u(s),\psi_s(s)\right\rangle\,ds,\ee
\be\label{weakL2ae}(u^k(t),\psi(t))\longrightarrow(u(t),\psi(t))\mbox{ for a.e. }t\in[0,T].\ee
By \eqref{LpLp} we get that there exists a function $\chi\in L^{p'}(0,T;L^{p'}(\O))$ such that
\be\label{weaktochi}|\nabla u^k|^{p-2}\nabla u^k\rightharpoonup \chi\quad\mbox{ weakly in }L^{p'}(0,T;L^{p'}(\O)).\ee
We want to prove that $\chi=|\nabla u|^{p-2}\nabla u$.
Since $u^k$ is a solution of problem \eqref{PF}, by Remark \ref{testsmooth}, we can use $u^k$ itself as a test function in \eqref{testW1p} getting
\be\label{nablaunnablaun}\ba{l}\dy\vs1\int_0^T\left(|\nabla u^k|^{p-2}\nabla u^k,\nabla u^k\right)\,dt=\int_0^T\langle u^k,u^k_t\rangle\,dt+(u_\circ,u^k(0))-(u^k(T),u^k(T))\\
\hfill\dy=-\frac12\|u^k(T)\|_2^2+\frac12\|u_\circ\|_2^2.\ea\ee
For any fixed $R>0$ let us consider the function $h_R$ defined in \eqref{hR}.
If $k>2R$, we can use $uh_R$ as a test function in equation \eqref{testW1p} to get
$$\int_0^T\left(|\nabla u^k|^{p-2}\nabla u^k,\nabla (uh_R)\right)\,dt=\int_0^T\langle u^k,(uh_R)_t\rangle\,dt+\|u_\circ(h_R)^\frac12\|_2^2-(u^k(T),u(T)h_R).$$
By \eqref{weaktochi}, \eqref{weaku_t} and \eqref{weakL2ae} we can pass to the limit as $k\to\infty$ in the above identity to gain
\be\label{chinablauhR}\int_0^T(\chi,\nabla (uh_R))\,dt=\int_0^T\langle u,(uh_R)_t\rangle\,dt+\|u_\circ(h_R)^\frac12\|_2^2-\|u(T)h_R^\frac12\|_2^2
.\ee
In order to pass to the limit as $R\to\infty$, we will examine each term separately.
$$\int_0^T(\chi,\nabla (uh_R))\,dt=\int_0^T(\chi,(\nabla u)h_R)\,dt+\int_0^T(\chi,u\otimes\nabla h_R))\,dt$$
As far as the first term is concerned, by dominated convergence we have
$$\int_0^T(\chi,(\nabla u)h_R)\,dt\longrightarrow\int_0^T(\chi,\nabla u)\,dt.$$
For the second one, 
considering that $\nabla h_R\not=0\iff R\le |x|\le 2R$, we have
$$|u\otimes\nabla h_R|\le c\left|\frac{u}R\right|\le2c\left|\frac ux\right|.$$
Since $1<p<2$, by Hardy inequality (it is not restrictive to suppose that $0\not\in\Omega$) we get
\be\label{hardy}\|u\otimes\nabla h_R\|_p\le c\left\|\frac{u}{x}\right\|_p\le c\|\nabla u\|_p<+\infty\ee
hence
$$\int_0^T\left|\left(\chi,u\otimes\nabla h_R\right)\right|\,dt\le\int_0^T\|\chi\|_{p'}\left\|u\otimes\nabla h_R\right\|_p\,dt\le c\int_0^T\|\chi\|_{p'}\|\nabla u\|_p\,dt<+\infty.$$
Once again we can apply the dominated convergence theorem to obtain
$$\int_0^T(\chi,u\otimes\nabla h_R)\,dt\longrightarrow 0.$$
Now we remark that 
\be\label{stronguhR}u h_R\longrightarrow u \quad\mbox{ strongly in }L^p(0,T;V(\Omega)).\ee
Indeed
$$\nabla u-\nabla(uh_R)=\nabla u(1-h_R)-u\otimes\nabla h_R$$
and, since $u\in L^p(0,T;V(\Omega))$, by the absolute continuity of the Lebesgue integral with respect to the domain of integration and \eqref{hardy}, we get the claim.
This allows us to pass to the limit in the term containing the time derivative
$$\int_0^T\langle u,(uh_R)_t\rangle\,dt=\int_0^T\langle uh_R,u_t\rangle\,dt\longrightarrow\int_0^T\langle u,u_t\rangle\,dt$$
by \eqref{stronguhR} and since $u_t\in L^{p'}(0,T;V(\Omega)')$.
In the end, by dominated convergence, we also get
$$\|u_\circ(h_R)^\frac12\|_2^2-\|u(T)h_R^\frac12\|_2^2\longrightarrow\|u_\circ\|_2^2-\|u(T)\|_2^2.$$
Collecting the above results and passing to the limit in \eqref{chinablauhR}, we have
\be\label{chinablau}\int_0^T(\chi,\nabla u)\,dt=\int_0^T\langle u,u_t\rangle\,dt+\|u_\circ\|_2^2-\|u(T)\|_2^2=\frac12\|u_\circ\|_2^2-\frac12\|u(T)\|_2^2
.\ee

By monotonicity we have
$$0\le\int_0^T\left(|\nabla u^k|^{p-2}\nabla u^k-|\nabla\psi|^{p-2}\nabla\psi,\nabla u^k-\nabla\psi\right)\,dt$$
Hence, using \eqref{nablaunnablaun}, \eqref{weaktochi}, \eqref{weakLpV} and the lower semicontinuity of the norm in the weak limit, we have
$$\ba{l}\dy\vs10\le\limsup_k-\frac12\|u^k(T)\|_2^2+\frac12\|u_\circ\|_2^2-\int_0^T(|\nabla u^k|^{p-2}\nabla u^k,\nabla\psi)\,dt\\
\hfill\dy\vs1-\int_0^T(|\nabla\psi|^{p-2}\nabla\psi,\nabla u^k-\nabla\psi)\,dt\\
\hfill\dy\le-\frac12\|u(T)\|_2^2+\frac12\|u_\circ\|_2^2-\int_0^T(\chi,\nabla\psi)\,dt-\int_0^T(|\nabla\psi|^{p-2}\nabla\psi,\nabla u-\nabla\psi)\,dt.
\ea$$
Substituting \eqref{chinablau} in the above inequality we get
$$0\le\int_0^T\left(\chi-|\nabla\psi|^{p-2}\nabla\psi,\nabla u-\nabla\psi\right)\,dt.$$
If we choose $\psi=u+\lambda\phi$ for generic $\phi\in V^{p,p'}(0,T;\O)$ and $\lambda\not=0$, we divide by $\lambda$ and finally we let $\lambda$ to 0, by the dominated convergence theorem we get that
$$\chi=|\nabla u|^{p-2}\nabla u.$$
Passing to the limit on $k$ in the definition of solution written for $u^k$, we get that $u$ is a solution in $\O$.
The estimates \eqref{nablauLinfinityLp}-\eqref{tp+2nablaut} for $u$ on $\O$ follow by the lower semicontinuity of the norms in the weak limits.
\end{proof}

\section{$L^q$ estimates for parabolic auxiliary problems}\label{adj}

In this section we deduce some estimates on the $L^q$ norm for the solution of some parabolic systems with smooth coefficients. The aim is to use them in the next section for the evaluation in $L^q$ of the time derivative of the solution of problem \eqref{PF}.
\par
Let $v(t,x)$ be a function such that
\be\label{beeta2}\sup_{s\in[0,t]}s\|\left(\mu+|\nabla v(s)|^2\right)^{\frac12}\|_p^p=:M(\mu,v)<+\infty.\ee
In order to apply known regularity results we introduce a time-space Friedrichs' mollifier $J_\eta$ and we define the following smooth tensor
\be\label{beeta}\ba{ll}\dy\vs1  (B_\eta(t;s,x))_{i\alpha j\beta}:=
\frac{\delta_{ij}\,\delta_{\alpha\beta}}{(\mu+|J_\eta(\nabla
v)(t-s,x)|^2)}{\null_{\frac {2-p}{2}}}\\
\dy\hskip5cm-(2-p)\frac{(J_\eta(\nabla v)\otimes J_\eta(\nabla
v))(t-s,x)}{(\mu+|J_\eta(\nabla v)(t-s,x)|^2)^{\frac
{4-p}{2}}}\,.\ea\ee
For any fixed $\sigma\in(0,t]$ and $\nu>0$ let us consider the parabolic problem
\be\ba{rll}\label{doubleadjoint}
\psi_\tau(\tau)-\nu\Delta\psi(\tau)-\nabla\cdot\left(B_\eta(t;\sigma-\tau,x)\nabla\psi(\tau)\right)&=0,&\mbox{ in }(0,\sigma)\times E,\\
\psi(\tau,x)&=0,&\mbox{ on }(0,\sigma)\times\pa E,\\
\psi(0,x)&=\psi_\circ(x),&\mbox{ on }\{0\}\times E.\ea\ee

\begin{lemma}\label{adjointLq}
Let $E$ be a bounded $C^2$ domain of $\R^n$. For any $\psi_\circ\in C^\infty_0(E)$ let $\psi$ be the unique solution of \eqref{doubleadjoint}. Then, for any $p\in(1,2)$ and $q\in\left[2,2+\frac{4(p-1)}{(2-p)^2}\right]$ it results
$$\|\psi(\tau)\|_q\le\|\psi_\circ\|_q,\qquad\forall\tau\in[0,\sigma].$$
\end{lemma}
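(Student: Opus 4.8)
The plan is to establish the $L^q$ bound by a standard energy/testing argument applied to problem \eqref{doubleadjoint}, exploiting the algebraic structure of the tensor $B_\eta$, which is a (regularized) $p$-Laplacian-type coefficient matrix. First I would test the equation with $|\psi|^{q-2}\psi$ (legitimate because $\psi$ is smooth, being the solution of a linear uniformly parabolic system with smooth coefficients and $C^\infty_0$ data). This produces the identity
\[
\frac1q\frac{d}{d\tau}\|\psi(\tau)\|_q^q + \nu\int_E \nabla(|\psi|^{q-2}\psi)\cdot\nabla\psi\,dx + \int_E \nabla(|\psi|^{q-2}\psi):B_\eta\nabla\psi\,dx = 0 .
\]
The $\nu$-term equals $\nu(q-1)\int_E |\psi|^{q-2}|\nabla\psi|^2\,dx\ge0$ and may simply be dropped. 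So the crux is to show that the bilinear form $\int_E \nabla(|\psi|^{q-2}\psi):B_\eta\nabla\psi\,dx$ is nonnegative, which then yields $\frac{d}{d\tau}\|\psi(\tau)\|_q^q\le 0$ and hence the claim after integrating in $\tau$.

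The main work is therefore the pointwise algebraic estimate. Writing $w:=J_\eta(\nabla v)(t-(\sigma-\tau),x)$ and $a:=\mu+|w|^2$, the tensor acts on a matrix $G$ by $B_\eta G = a^{-(2-p)/2}G - (2-p)a^{-(4-p)/2}(w\otimes w)G$, i.e. on each row $G_i$ of $G$ it acts (in the spatial index) as $a^{-(2-p)/2}G_i - (2-p)a^{-(4-p)/2}(w\cdot G_i)w$. Computing $\nabla(|\psi|^{q-2}\psi)$ componentwise, one gets a contribution $|\psi|^{q-2}\nabla\psi$ plus a rank-one-in-$\psi$ piece $(q-2)|\psi|^{q-4}(\psi\cdot\nabla\psi)\otimes\psi$ — here I must be careful with the vectorial ($N$-component) structure. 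After contracting, the integrand reduces to a sum of terms of the form $|\psi|^{q-2}$ times the quadratic form $Q(\xi):= a^{-(2-p)/2}|\xi|^2 - (2-p)a^{-(4-p)/2}(w\cdot\xi)^2$ evaluated on $\xi=\nabla\psi_i$, plus cross terms. The key inequality is that $Q(\xi)\ge (p-1)a^{-(2-p)/2}|\xi|^2\ge0$, which follows from $(w\cdot\xi)^2\le |w|^2|\xi|^2\le a|\xi|^2$ and $1<p<2$. For the cross terms coming from the $(q-2)$-piece, I expect them to combine into a nonnegative quadratic form in the pair $(\nabla\psi_i, \psi)$ exactly as in the classical proof that the $p$-Laplacian satisfies $L^q$ contraction; this is precisely where the restriction $q\le 2+\frac{4(p-1)}{(2-p)^2}$ must enter, through a discriminant condition on that quadratic form. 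Concretely, one is led to require that a $2\times2$ matrix with entries involving $(q-1)(p-1)$ and $(q-2)$-type coefficients be positive semidefinite, and the stated upper bound on $q$ is exactly the borderline of that condition.

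The step I expect to be the genuine obstacle is this discriminant bookkeeping: organizing the cross terms so that the needed quadratic form is manifestly nonnegative, and checking that the threshold one obtains is indeed $q=2+\frac{4(p-1)}{(2-p)^2}$ and not something weaker. Everything else — justifying the testing step, discarding the $\nu$-term, the final integration in $\tau$ to pass from $\frac{d}{d\tau}\|\psi\|_q^q\le0$ to $\|\psi(\tau)\|_q\le\|\psi_\circ\|_q$ — is routine. One mild technical point worth a line: near points where $\psi=0$ the weight $|\psi|^{q-4}$ is singular when $q<4$, so strictly one should test with $(\delta+|\psi|^2)^{(q-2)/2}\psi$ and let $\delta\to0$, or invoke the standard regularization; since $\psi$ is smooth this causes no real difficulty.
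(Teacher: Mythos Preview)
Your proposal is correct and follows essentially the same route as the paper: test with $|\psi|^{q-2}\psi$, discard the nonnegative $\nu$-terms, and show that the $B_\eta$-contribution has the right sign under the stated restriction on $q$. The paper carries out your ``discriminant bookkeeping'' by bounding the cross term via Cauchy--Schwarz as $|I_2|\le J_1^{1/2}J_2^{1/2}$ and then applying Young's inequality with a free parameter $\varepsilon$; the compatibility of the two resulting constraints on $\varepsilon$ is exactly the discriminant condition $q\le 2+\tfrac{4(p-1)}{(2-p)^2}$ you anticipated.
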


\begin{proof}
The existence and uniqueness of the solution of \eqref{doubleadjoint} follows, for instance, by \cite[Theorem IV.9.1]{LSU} which also gives $\psi\in L^q(0,\sigma;W^{2,q}(E)\cap W^{1,2}_0(E)),\ \psi_\tau\in L^q(0,\sigma;L^q(E))$. For brevity of notation we set
$$a_\eta(\mu,v):=\left(\mu+|J_\eta(\nabla
v)|^2\right)^\frac{(p-2)}{2}\,.$$

Since $q\ge2$ we can multiply the system by $|\psi|^{q-2}\psi$ and integrate over $E$ obtaining
$$\ba{l}\dy\vs1\frac1q\frac d{dt}\|\psi\|_q^q+\nu\int_E|\psi|^{q-2}|\nabla\psi|^2\,dx+\nu(q-2)\int_E|\psi|^{q-4}|\nabla\psi\cdot\psi|^2\,dx\\
\hfill\dy\vs1+\int_E a_\eta(\mu,v(\sigma-\tau))|\nabla\psi|^2|\psi|^{q-2}\,dx
+(q-2)\int_E a_\eta(\mu,v(\sigma-\tau))|\psi|^{q-4}|\nabla\psi\cdot\psi|^2\,dx\\
\hfill\dy\vs1=(p-2)\int_E\frac{|\psi|^{q-2}|J_\eta(\nabla v(\sigma-\tau))\cdot\nabla\psi|^2}
{\left(\mu+|J_\eta(\nabla v(\sigma-\tau))|^2\right)^{\frac{4-p}2}}\,dx\\
\hfill\dy\vs1+(p-2)(q-2)\int_E\frac{|\psi|^{q-4}(J_\eta(\nabla v(\sigma-\tau)\cdot\nabla\psi)(J_\eta(\nabla v(\sigma-\tau)\cdot\psi)(\nabla\psi\cdot\psi)}
{\left(\mu+|J_\eta(\nabla v(\sigma-\tau))|^2\right)^{\frac{4-p}2}}\,dx\\
\hfill\dy=:(p-2)I_1+(p-2)(q-2)I_2.\ea
$$
We observe that
$$|I_1|\le\int_E a_\eta(\mu,v(\sigma-\tau))|\psi|^{q-2}|\nabla\psi|^2\,dx=:J_1.$$
By Cauchy-Schwarz's and H\"older's inequalities it results
$$\ba{l}\dy\vs1|I_2|\le\int_E\left(\mu+|J_\eta(\nabla v(\sigma-\tau))|^2\right)^{\frac{p-4}2}|\psi|^{q-4}|J_\eta(\nabla v(\sigma-\tau))|^2|\nabla\psi||\psi||\nabla\psi\cdot\psi|\,dx\\
\hfill\dy\vs1\le
\left(\int_E a_\eta(\mu,v(\sigma-\tau))|\psi|^{q-4}|\nabla\psi|^2|\psi|^2\,dx\right)^{\frac12}
\left(\int_E a_\eta(\mu,v(\sigma-\tau))|\psi|^{q-4}|\nabla\psi\cdot\psi|^2\,dx\right)^{\frac12}\\
\hfill\dy=:J_1^{\frac12}J_2^{\frac12}\ea$$
hence
\be\label{Lqdecresing}\frac1q\frac d{dt}\|\psi\|_q^q+(p-1)J_1+(q-2)J_2\le(2-p)(q-2)J_1^{\frac12}J_2^{\frac12}\le\frac1{2\ve}J_1+\frac\ve2(2-p)^2(q-2)^2J_2.\ee
We want to choose $\ve$ such that
\be\label{rangeforeps}\frac1{2\ve}\le p-1,\qquad\frac\ve2 (2-p)^2(q-2)^2\le q-2\ee
and this is always possible if 
$$\frac1{2(p-1)}\le\frac2{(q-2)(2-p)^2}.$$
An easy computation shows that the above inequality is verified for any $p\in(1,2)$ if
$$q\in\left[2,2+\frac{4(p-1)}{(2-p)^2}\right].$$
Choosing an $\ve$ satisfying \eqref{rangeforeps} and substituting it in \eqref{Lqdecresing} we get
$$\frac1q\frac d{dt}\|\psi\|_q^q\le0.$$
\end{proof}

For any fixed $t\in(0,T), \nu>0$ and $\vp_\circ\in C^\infty_0(E)$ we consider the following problem, adjoint of \eqref{doubleadjoint}

\be\label{AD1}\begin{array}{rll}\dy\vs1 \vp_s(s)-\nu\Delta \vp(s)- \nabla\cdot
(B_\eta(t;s,x)\nabla \vp(s))&= 0\,,&\hskip-0.2cm\textrm{ in }(0,t)\times E,\\
\dy\vs1\hskip3.5cm \vp(s,x)&=0\,,&\hskip-0.2cm\textrm{ on }
(0,t)\times\pa E,\;\\
\dy\hskip3.5cm \vp(0,x)&=\vp_\circ(x),&\hskip-0.2cm\mbox{ on
}\{0\}\times E,\end{array}\ee

\begin{lemma}
Let $E$ be a bounded $C^2$ domain of $\R^n$. For any $\vp_\circ\in C^{\infty}_0(E)$ let $\vp$ be the unique solution of \eqref{AD1}. Then for any $p\in(1,2)$ and $r\in\left[2-\frac{4(p-1)}{p^2},2\right]$
$$\|\vp(s)\|_r\le\|\vp_\circ\|_r,\qquad\forall s\in[0,t].$$
\end{lemma}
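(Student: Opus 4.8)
The approach I would take is to read \eqref{AD1} as the formal adjoint of \eqref{doubleadjoint} and to obtain the $L^r$ bound with $r\le 2$ from the $L^{r'}$ bound of Lemma \ref{adjointLq} by a duality argument, $r'=\frac r{r-1}$. First I would check that the range $r\in\big[2-\frac{4(p-1)}{p^2},2\big]$ is exactly the image of $q\in\big[2,2+\frac{4(p-1)}{(2-p)^2}\big]$ under $q\mapsto q'$: writing $2+\frac{4(p-1)}{(2-p)^2}=\frac{2(p^2-2p+2)}{(2-p)^2}$, the condition $\frac r{r-1}\le\frac{2(p^2-2p+2)}{(2-p)^2}$ rearranges to $r\ge\frac{2(p^2-2p+2)}{p^2}=2-\frac{4(p-1)}{p^2}$, while $r\le 2$ corresponds to $r'\ge2$; note also $2-\frac{4(p-1)}{p^2}=1+\frac{(2-p)^2}{p^2}>1$, so $r'<\infty$.

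Next I would fix $s\in(0,t]$, take an arbitrary $\psi_\circ\in C^\infty_0(E)$, and let $\psi$ be the solution of \eqref{doubleadjoint} with $\sigma=s$. Both $\psi$ and the solution $\vp$ of \eqref{AD1} are smooth enough for the manipulations below — the coefficients are mollified and the operators uniformly parabolic thanks to the $\nu\Delta$ term and the boundedness of $B_\eta$ (see \cite[Theorem IV.9.1]{LSU}). I would set $F(\tau):=\int_E\vp(s-\tau,x)\cdot\psi(\tau,x)\,dx$ for $\tau\in[0,s]$. The key observation is that \eqref{AD1} evaluated at time $s-\tau$ and \eqref{doubleadjoint} evaluated at time $\tau$ carry the \emph{same} coefficient $B_\eta(t;s-\tau,x)$; substituting $\vp_s$ and $\psi_\tau$ from the two systems gives
$$F'(\tau)=\nu\int_E\big(\vp\cdot\Delta\psi-\Delta\vp\cdot\psi\big)\,dx+\int_E\big(\vp\cdot\nabla\cdot(B_\eta\nabla\psi)-\nabla\cdot(B_\eta\nabla\vp)\cdot\psi\big)\,dx.$$
Integrating by parts in space — the boundary terms vanish since $\vp$ and $\psi$ vanish on $\partial E$ — the first integral is $0$ by self-adjointness of $-\Delta$, and the second is $0$ because $B_\eta$ is symmetric under the exchange of the index pairs $(i\alpha)$ and $(j\beta)$, which is immediate from \eqref{beeta}. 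Hence $F$ is constant on $[0,s]$, and equating $F(0)$ with $F(s)$ yields
$$\int_E\vp(s,x)\cdot\psi_\circ(x)\,dx=\int_E\vp_\circ(x)\cdot\psi(s,x)\,dx.$$

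Then H\"older's inequality and Lemma \ref{adjointLq}, applied with $q=r'$ and $\sigma=s$, give
$$\Big|\int_E\vp(s)\cdot\psi_\circ\,dx\Big|=\Big|\int_E\vp_\circ\cdot\psi(s)\,dx\Big|\le\|\vp_\circ\|_r\,\|\psi(s)\|_{r'}\le\|\vp_\circ\|_r\,\|\psi_\circ\|_{r'}.$$
Since $C^\infty_0(E)$ is dense in $L^{r'}(E)$ and $r'<\infty$, taking the supremum over $\psi_\circ$ in the unit ball of $L^{r'}(E)$ yields $\|\vp(s)\|_r\le\|\vp_\circ\|_r$ for every $s\in[0,t]$, which is the assertion.

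The main obstacle is not conceptual but technical: one must make rigorous the differentiation of $F$ and the spatial integrations by parts behind $F'\equiv0$. This I would handle exactly as in the proof of Lemma \ref{adjointLq}, invoking the regularity $\psi,\vp\in L^q(0,s;W^{2,q}(E)\cap W^{1,2}_0(E))$ with $\psi_\tau,\vp_s\in L^q(0,s;L^q(E))$ granted by \cite[Theorem IV.9.1]{LSU} for the non-degenerate auxiliary systems; in particular $\vp,\psi\in C([0,s];L^2(E))$, so $F$ is continuous on $[0,s]$ and the identity $F(0)=F(s)$ is legitimate. (A direct energy argument, testing \eqref{AD1} with a regularization of $|\vp|^{r-2}\vp$, also runs but appears to yield only a smaller range of $r$, so the duality route is both shorter and sharp.)
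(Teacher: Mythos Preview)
Your proof is correct and follows essentially the same route as the paper: derive the duality identity $(\vp(s),\psi_\circ)=(\vp_\circ,\psi(s))$ between solutions of \eqref{AD1} and \eqref{doubleadjoint}, then apply Lemma~\ref{adjointLq} with $q=r'$ and conclude by density. The only cosmetic difference is that you phrase the duality identity via the constancy of $F(\tau)=\int_E\vp(s-\tau)\cdot\psi(\tau)\,dx$, whereas the paper multiplies \eqref{AD1} by $\psi(\sigma-s)$ and integrates in space--time; both lead to the same cancellation, and your verification that $r\in[2-\tfrac{4(p-1)}{p^2},2]$ is exactly the conjugate range of $q\in[2,2+\tfrac{4(p-1)}{(2-p)^2}]$ is clean and explicit.
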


\begin{proof}
For any arbitrary function $\psi_\circ\in C^\infty_0(E)$ and $\sigma\in[0,t]$,  let $\psi$ be the solution of problem \eqref{doubleadjoint}. 
The system \eqref{AD1} has an unique solution, by \cite[Theorem IV.9.1]{LSU}, and the solution $\vp$ is also regular enough to multiply \eqref{AD1} by $\psi(\sigma-s)$. Integrating the product by parts on $[0,\sigma]\times\O$ gives
$$\ba{l}\dy\vs1\left(\vp(\sigma),\psi_\circ\right)-\left(\vp_\circ,\psi(\sigma)\right)+\int_0^\sigma\left(\vp(s),\psi_s(\sigma-s)\right)\,ds
-\nu\int_0^\sigma\left(\vp(s),\Delta\psi(\sigma-s)\right)\,ds\\
\hfill\dy-\int_0^\sigma\left(\nabla\cdot\left(B_\eta(t;s)\nabla\psi(\sigma-s)\right),\vp(s)\right)\,ds=0.\ea$$
Since $\psi$ is a solution of \eqref{doubleadjoint}, substituting in the integrals $\sigma-s=\tau$, we get
\be\label{dualpsiphi}\left(\vp(\sigma),\psi_\circ\right)=\left(\vp_\circ,\psi(\sigma)\right).\ee
Since $r\in\left[\frac{2p^2+4}{p^2+4p},2\right]$ we have that $r'\in\left[2,2+\frac{4(p-1)}{(2-p)^2}\right]$ and we can apply Lemma \ref{adjointLq} to get
$$\left|\left(\vp_\circ,\psi(\sigma)\right)\right|\le\|\vp_\circ\|_r\|\psi(\sigma)\|_{r'}\le\|\vp_\circ\|_r\|\psi_\circ\|_{r'}$$
for any $\psi_\circ\in C^\infty_0(E)$.
By a density argument and \eqref{dualpsiphi} we get the thesis.
\end{proof}

\begin{lemma}\label{phiL2Lr}
Let $E$ be a bounded $C^2$ domain of $\R^n$, $p>\frac{2n}{n+2},\ r\in\left[2-\frac{4(p-1)}{p^2},2\right]$ and $\vp_\circ\in C^\infty_0(E)$. If $\vp$ is the solution of problem \eqref{AD1} we have
$$\|\vp(s)\|_2\le c M(\mu,v)^{\frac{(2-p)\gamma}2}\|\vp_\circ\|_r\left(t^{\frac1p}-(t-s)^{\frac1p}\right)^{-p\gamma},\qquad\forall s\in(0,t],$$
with $M(v,\eta)$ defined in \eqref{beeta2} and
\be\label{Maxpostraa} \gamma=\gamma(r):=\mbox{\large$
\frac{n(2-r)}{r(2p-2n+np)}$}\,.\ee
\end{lemma}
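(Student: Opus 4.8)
The plan is to derive the $L^2$ estimate for $\vp(s)$ from the $L^r$ estimate already proved in the previous lemma, together with a smoothing (ultracontractivity-type) estimate for the parabolic operator in \eqref{AD1}, obtained via a Nash--Moser / Gagliardo--Nirenberg iteration scheme exploiting the uniform ellipticity of the frozen coefficient tensor $B_\eta$ plus the $\nu\Delta$ regularization. First I would record that, thanks to the pointwise structure of $B_\eta$ in \eqref{beeta}, the bilinear form associated with $-\nu\Delta - \nabla\cdot(B_\eta\nabla\,\cdot\,)$ is coercive: one has
$$\int_E B_\eta(t;\sigma-\tau,x)\nabla\vp\cdot\nabla\vp\,dx \ge (p-1)\int_E \bigl(\mu+|J_\eta(\nabla v)|^2\bigr)^{\frac{p-2}2}|\nabla\vp|^2\,dx\ge 0,$$
but the key extra gain is the term $\nu\|\nabla\vp\|_2^2$, which is a genuine $\dot H^1$ coercivity independent of $v$; this is exactly what allows Sobolev embedding to enter. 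However, the ellipticity constant of $B_\eta$ itself degenerates where $|J_\eta(\nabla v)|$ is large, so the iteration must be run using the \emph{full} operator including $\nu\Delta$, treating the $B_\eta$-part merely as a nonnegative perturbation. This is the reason the statement carries the factor $M(\mu,v)^{(2-p)\gamma/2}$: the degenerate part must be absorbed by interpolating against the $L^p$-bound \eqref{beeta2} on $(\mu+|\nabla v|^2)^{1/2}$.

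Concretely, I would proceed as follows. Step 1: testing \eqref{AD1} with $|\vp|^{q-2}\vp$ for $q\ge 2$ and using coercivity gives, after the usual manipulations producing the factor $|\vp|^{(q-2)/2}$ inside the gradient,
$$\frac1q\frac{d}{ds}\|\vp\|_q^q + c\,\nu\,\bigl\|\nabla(|\vp|^{q/2})\bigr\|_2^2 \le 0,$$
hence in particular $\|\vp(s)\|_q$ is nonincreasing for every $q\in[2,\infty)$ (this recovers and extends the previous lemma). Step 2: applying Gagliardo--Nirenberg to $w:=|\vp|^{q/2}$, i.e. $\|w\|_{2\cdot\frac{n}{n-2}}^2 \le c\|\nabla w\|_2^2$ (or the $n=2$ analogue), and integrating the differential inequality over a dyadic time interval, I would run the Moser iteration $q_k = 2(n/(n-2))^k$ to pass from the $L^r$-norm at time $0$ (or at $s=0$; here it is cleaner to iterate from a slightly earlier time and use monotonicity) to the $L^2$-norm at time $s$, producing a bound of the form $\|\vp(s)\|_2 \le c\,(\nu s)^{-\beta}\|\vp_\circ\|_r$ for the purely-$\nu$ part. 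Step 3: to get the stated $v$-dependent bound rather than a $\nu$-dependent one, I would instead keep the $B_\eta$ term and handle the degeneracy by Hölder-interpolating the "bad" region against $\|(\mu+|\nabla v|^2)^{1/2}\|_p^p \le M(\mu,v)/s$; the exponent bookkeeping — matching powers of $(\mu+|\nabla v|^2)$, of $|\vp|$, and of the time weight — is what forces the precise value $\gamma=\gamma(r)=\frac{n(2-r)}{r(2p-2n+np)}$ and the singular factor $\bigl(t^{1/p}-(t-s)^{1/p}\bigr)^{-p\gamma}$ (note $t^{1/p}-(t-s)^{1/p}\sim s/(p\,t^{1-1/p})$ for small $s$, so this is the natural replacement of $s^{-p\gamma}$ adapted to the non-autonomous time weight $s\|\nabla v(s)\|_p^p$). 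The hypothesis $p>\frac{2n}{n+2}$ is precisely what makes $2p-2n+np>0$, so that $\gamma>0$ and the scaling closes.

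The main obstacle I anticipate is Step 3: making the interpolation between the degenerate diffusion and the $L^p$-in-space, $L^\infty$-in-time ($\times\, s$) bound on $\nabla v$ fit together with the Moser iteration so that the accumulated constant stays finite and the final exponents are exactly $\gamma(r)$ and $-p\gamma$. One must be careful that $M(\mu,v)$ controls $s\,\|(\mu+|\nabla v(s)|^2)^{1/2}\|_p^p$ with the \emph{same} time weight that appears when integrating $\frac{d}{ds}\|\vp\|_q^q$, and that after the change of variables $\sigma-\tau$ the weight becomes $t^{1/p}-(t-s)^{1/p}$ rather than a pure power; getting that substitution right, and checking that the iteration of exponents $q_k$ converges with summable error, is the delicate part. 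Everything else — coercivity of $B_\eta$, the $L^q$-monotonicity, the Gagliardo--Nirenberg step — is routine.
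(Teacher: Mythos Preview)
The paper does not actually prove this lemma: it simply refers to \cite[Lemma 2.4]{CM} and remarks that the proof there carries over unchanged to the present range of $r$. So the comparison is with that external argument, whose structure can be read off from the form of the estimate.

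Your proposal has the right ingredients but assembles them in the wrong order, and one step is genuinely misplaced. The Moser iteration $q_k=2\bigl(\tfrac{n}{n-2}\bigr)^k$ in your Step~2 starts at $q=2$ and goes \emph{upward}; it never touches the range $r<2$ from which you need to depart, so it cannot produce an $L^r\to L^2$ smoothing. Likewise, your emphasis on the $\nu\|\nabla\vp\|_2^2$ term is backwards: the final bound must be uniform in $\nu$ (later $\nu\to0$), so the $\nu\Delta$ term is simply dropped as a nonnegative contribution, not used as the engine of the Sobolev step. What you describe as a ``fix'' in Step~3 is in fact the entire argument.

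The actual proof is a single Nash--type ODE at level $q=2$, with no iteration. Test \eqref{AD1} with $\vp$ itself and discard the $\nu$--term to get
\[
\frac12\frac{d}{ds}\|\vp\|_2^2+(p-1)\int_E(\mu+|J_\eta(\nabla v(t-s))|^2)^{\frac{p-2}{2}}|\nabla\vp|^2\,dx\le0.
\]
H\"older with exponents $\tfrac2p,\tfrac2{2-p}$ gives
\[
\|\nabla\vp\|_p^2\le\Bigl(\int_E(\mu+|J_\eta(\nabla v)|^2)^{\frac{p-2}{2}}|\nabla\vp|^2\Bigr)\,\|(\mu+|J_\eta(\nabla v(t-s))|^2)^{1/2}\|_p^{\,2-p},
\]
and by \eqref{beeta2} the last factor is bounded by $(M(\mu,v)/(t-s))^{(2-p)/p}$. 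Then Gagliardo--Nirenberg interpolates $\|\vp\|_2$ between $\|\nabla\vp\|_p$ and $\|\vp\|_r$ (here is where $p>\tfrac{2n}{n+2}$ enters, so that $p^\ast>2$), and the $L^r$--monotonicity $\|\vp(s)\|_r\le\|\vp_\circ\|_r$ from the preceding lemma closes the differential inequality
\[
\frac{d}{ds}\|\vp\|_2^2 + c\,M^{-\frac{2-p}{p}}(t-s)^{\frac{2-p}{p}}\|\vp_\circ\|_r^{-\frac{2(1-\theta)}{\theta}}\|\vp\|_2^{2/\theta}\le0,
\]
with $\theta$ the interpolation exponent. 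Integrating this Bernoulli--type ODE from $0$ to $s$ and using $\int_0^s(t-\sigma)^{(2-p)/p}d\sigma=p\bigl(t^{1/p}-(t-s)^{1/p}\bigr)$ yields exactly the stated bound, and the exponent bookkeeping gives $\gamma=\theta/(p(1-\theta))=\tfrac{n(2-r)}{r(2p-2n+np)}$. No iteration is needed; the passage from $r$ to $2$ is a single interpolation step.
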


\begin{proof}
We refer to \cite[Lemma 2.4]{CM} remarking that even if the range for $r$ is different, the proof remains unchanged.
\end{proof}

\section{Estimates for the time derivative}\label{ut}

We begin the section gathering some results taken from \cite[Section 3]{CM} concerning the following non-singular ($\mu>0$, $\nu>0$) parabolic system on the bounded $C^2$ domain $E$
\be\label{PFepv}\begin{array}{rll}\dy\vs1 v_t-\nu\Delta
v-
\nabla\cdot\left(\left(\mu+|(\nabla
v)|^2\right)^\frac{(p-2)}{2}\nabla
v\right)&\hskip-.3cm= 0\,,&\hskip-0.2cm\textrm{ in
}(0,T)\times E,
\\\dy\vs1 v(t,x)&\hskip-.3cm=0\,,&\hskip-0.2cm \textrm{ on }
(0,T)\times\partial E,\;\\\dy v(0,x)&\hskip-.3cm=v_\circ(x),&\hskip-0.2cm\mbox{ on
}\{0\}\times E\,.\end{array}\ee %

We have the following results, for which we refer to \cite[Propositions 3.1 and 3.2]{CM}

\begin{proposition}\label{existence}
{\sl Let $\nu>0$, $\mu>0$ and $p\in(1,2)$. Assume that $v_\circ$
belongs to $C_0^\infty(E)$. Then there exists a unique weak solution $v$
of system \eqref{PFepv} such that
$$ v\in C(0,T;L^2(E))\cap L^{2}(0,T;
W_0^{1,2}(E)),$$
$$ v_t\in L^{\infty}(0,T;L^2(E))\cap
L^2(0,T;W^{1,2}(E)),$$
$$\lim_{t\to
0^+}\|v(t)-v_\circ\|_2=0\,.$$
Moreover
\be\label{iv}\vspace{0.5ex}\|t^\frac 1p \nabla v\|_{L^{\infty}(0,T;L^p(E))}\leq c\,B(\mu,v_\circ)^\frac 1p,\ee
\be\label{v}\|t\,v_t\|_{L^{\infty}(0,T;L^2(E))}\le c B(\mu,v_\circ)^{\frac12}\ee
where
\be\label{defB}B(\mu, w):= \,\|w\|_2^2+\mu^\frac p2T|E|\,. \ee
}
\end{proposition}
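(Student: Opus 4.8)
The statement to prove is Proposition~\ref{existence}, which collects existence, uniqueness and two weighted a priori bounds for the regularized system \eqref{PFepv}. Since the paper attributes this to \cite[Propositions 3.1 and 3.2]{CM}, my plan is essentially to reconstruct those arguments, emphasizing where the parameters $\mu,\nu$ enter.

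\textbf{Existence and uniqueness.} The system \eqref{PFepv} is a non-degenerate quasilinear parabolic system: the principal part $-\nu\Delta v-\nabla\cdot\big((\mu+|\nabla v|^2)^{(p-2)/2}\nabla v\big)$ has ellipticity constant bounded below by $\nu>0$ (uniformly in $\nabla v$, since $(\mu+|\nabla v|^2)^{(p-2)/2}\geq 0$) and its coefficients are smooth and bounded functions of $\nabla v$ because $\mu>0$ keeps us away from the singularity and $p<2$ keeps the factor bounded above by $\mu^{(p-2)/2}$. Hence one gets a unique weak solution by the standard Faedo--Galerkin method: build finite-dimensional approximations $v^m$ in the eigenfunction basis of $-\Delta$ on $E$, derive the $L^\infty(0,T;L^2)\cap L^2(0,T;W^{1,2}_0)$ energy bound by testing with $v^m$ (the nonlinear term contributes $\int(\mu+|\nabla v^m|^2)^{(p-2)/2}|\nabla v^m|^2\geq 0$ and the $\nu$-term gives the $W^{1,2}_0$ control), pass to the limit using monotonicity of $\xi\mapsto \nu\xi+(\mu+|\xi|^2)^{(p-2)/2}\xi$ (Minty's trick) to identify the nonlinear limit, and obtain $v_t\in L^2(0,T;W^{-1,2})$ from the equation. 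Uniqueness follows by testing the difference equation with $v_1-v_2$ and invoking monotonicity plus Gronwall. The higher regularity $v_t\in L^\infty(0,T;L^2)\cap L^2(0,T;W^{1,2})$ comes, at the Galerkin level, from differentiating in $t$ and testing with $v^m_t$; this is legitimate here precisely because of the non-degeneracy, and $\lim_{t\to0^+}\|v(t)-v_\circ\|_2=0$ follows from $v\in C(0,T;L^2)$ together with the initial condition.

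\textbf{The weighted estimates \eqref{iv} and \eqref{v}.} These are proved by testing with $v_t$ rather than $v$. Multiplying \eqref{PFepv}$_1$ by $v_t$ and integrating over $E$ gives, after recognizing the nonlinear term as a gradient in time of the convex potential $F(\nabla v):=\frac1p(\mu+|\nabla v|^2)^{p/2}$-type density (up to constants),
$$\|v_t(t)\|_2^2+\frac\nu2\frac{d}{dt}\|\nabla v\|_2^2+\frac{d}{dt}\int_E \Phi(\nabla v)\,dx=0,$$
where $\Phi$ is the corresponding potential with $\nabla_\xi\Phi(\xi)=(\mu+|\xi|^2)^{(p-2)/2}\xi$, and one has the two-sided control $c(\mu+|\xi|^2)^{(p-2)/2}|\xi|^2\leq \Phi(\xi)\lesssim (\mu+|\xi|^2)^{p/2}$. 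Multiplying by $t$ and integrating on $(0,t)$, the $\nu$-term is nonnegative and can be dropped after integration by parts (it produces $\frac\nu2 t\|\nabla v(t)\|_2^2\geq0$ on the good side), and one obtains
$$\int_0^t s\|v_s(s)\|_2^2\,ds+ t\int_E\Phi(\nabla v(t))\,dx\leq \int_0^t\int_E\Phi(\nabla v(s))\,dx\,ds,$$
while the right side is bounded by $\int_0^t\|(\mu+|\nabla v|^2)^{1/2}\|_p^p\,ds$, which in turn is controlled by the basic energy estimate plus the term $\mu^{p/2}T|E|$, i.e.\ by $c\,B(\mu,v_\circ)$ with $B$ as in \eqref{defB}. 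From $t\int_E\Phi(\nabla v(t))\,dx\lesssim B(\mu,v_\circ)$ and the lower bound $\Phi(\xi)\gtrsim|\xi|^p$ when $|\xi|\gtrsim\mu^{1/2}$ (handling the small-gradient region with the crude bound $\Phi\geq c(\mu+|\xi|^2)^{(p-2)/2}|\xi|^2$ and absorbing the $\mu$-contribution into $B$), one extracts $t\|\nabla v(t)\|_p^p\lesssim B(\mu,v_\circ)$, which is \eqref{iv}. For \eqref{v} one differentiates \eqref{PFepv}$_1$ in $t$, tests with $v_t$, uses that the linearized operator is nonnegative (its symmetric part is $\nu I+(\mu+|\nabla v|^2)^{(p-2)/2}I+(p-2)(\mu+|\nabla v|^2)^{(p-4)/2}\nabla v\otimes\nabla v$, whose quadratic form is $\geq (p-1)(\mu+|\nabla v|^2)^{(p-2)/2}|\cdot|^2\geq0$ since $p>1$), to get $\frac12\frac{d}{dt}\|v_t\|_2^2\leq 0$ after dropping nonnegative terms; multiplying by $t^2$, integrating, and using $\int_0^t s\|v_s\|_2^2\,ds\lesssim B(\mu,v_\circ)$ from the previous step yields $t^2\|v_t(t)\|_2^2\lesssim B(\mu,v_\circ)$, which is \eqref{v}.

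\textbf{Main obstacle.} The delicate point is not the formal manipulation but its rigorous justification at the Galerkin level: testing with $v_t$ and with $v_{tt}$ (equivalently, $\partial_t$-differentiating the equation and testing with $v_t$) requires enough regularity of the approximations, and the passage to the limit must preserve the weighted estimates. This is handled exactly as in \cite[Appendix]{CM} and \cite[Section 3]{CM}: one works with the Galerkin system (which is a smooth finite ODE system, so time-differentiation is legitimate), derives the bounds with constants independent of $m$, $\nu$-uniform where claimed, and passes to the limit using weak-$*$ lower semicontinuity of the norms; the monotonicity identification of the nonlinear term is the only place genuine care with the nonlinearity is needed, and there the strict monotonicity from $\nu>0$ makes it routine. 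I would therefore present the formal computation in detail and refer to \cite{CM} for the approximation scheme, exactly in the spirit already adopted in the proof of Theorem~\ref{existencebounded}.
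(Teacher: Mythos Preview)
Your proposal is correct and follows essentially the same route as the paper, which delegates the proof to \cite[Propositions~3.1 and~3.2]{CM} and whose formal computations (testing with $v$, with $v_t$, then differentiating in $t$ and testing with $v_t$, each time multiplying by the appropriate power of $t$) are exactly those displayed in the proof of Theorem~\ref{existencebounded}. One minor simplification: for \eqref{iv} you do not need to split into large and small gradients, since $|\xi|^p\leq(\mu+|\xi|^2)^{p/2}$ and the energy identity plus the elementary bound $(\mu+|\xi|^2)^{p/2}\leq(\mu+|\xi|^2)^{(p-2)/2}|\xi|^2+\mu^{p/2}$ give directly $t\|\nabla v(t)\|_p^p\leq t\int_E(\mu+|\nabla v(t)|^2)^{p/2}\,dx\leq c\,B(\mu,v_\circ)$.
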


With this tool at our disposal we can state the following crucial result

\begin{proposition}\label{ut1} {\sl Let $p>\frac{2n}{n+2}$, $q\in  \left[2,2+\frac{4(p-1)}{(2-p)^2}\right]$ and $\O$ a bounded or exterior $C^2$ domain of $\R^n$.
 Let $u$ be the unique solution of \eqref{PF} corresponding to
$u_\circ\in L^2(\O)$. Then $t^{1+\gamma}\,u_t\in
L^\infty(0,T;L^{q}(\O))$,
 with  $\gamma=\gamma(q')$ given by \eqref{Maxpostraa}. Moreover the following estimate holds
 \be\label{estvt}\|u_t(t)\|_{q}\leq \frac{c}{t}{\null_{1+\gamma}}\,
\|u_\circ\|_2^{(2-p)\gamma+1},\, \mbox{ a.e in }(0,T).\ee}
\end{proposition}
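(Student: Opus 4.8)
The plan is to prove the estimate \eqref{estvt} first on a bounded domain $E$ and then transfer it to an exterior domain by the same invading-sequence argument used in Theorem \ref{existenceexterior}. On a bounded domain we approximate the singular system \eqref{PF} by the non-singular systems \eqref{PFepv} with parameters $\mu,\nu>0$, whose solutions $v=v_{\mu,\nu}$ are regular by Proposition \ref{existence}. The key point is to estimate $\|v_t(t)\|_q$ by \emph{duality against the adjoint problem} \eqref{AD1}: differentiating \eqref{PFepv} in time, the function $v_t$ solves a linear parabolic system whose coefficient tensor is precisely (a mollified version of) $B_\eta(t;\cdot,\cdot)$ with $v$ in the role of $v$ in \eqref{beeta}. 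Hence, for fixed $t$ and any $\vp_\circ\in C^\infty_0(E)$, testing $v_t$ against the solution $\vp$ of \eqref{AD1} (with the time variable reversed, $\vp(t-\cdot)$) and integrating by parts over $(0,t)\times E$ makes the interior terms cancel, leaving
$$(v_t(t),\vp_\circ)=(v_t(0),\vp(t))+\text{(terms from }\nu\Delta\text{ and the mollification)},$$
which after removing the regularizations reduces the problem to controlling $\|v_t(0)\|$ against $\|\vp(t)\|$ in a suitable norm.

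The heart of the matter is then to choose the right norm for the pairing. I would pair in the duality $L^2$–$L^2$ after one further step: write $(v_t(t),\vp_\circ)$ and bound it by $\|v_t(\sigma)\|_2\,\|\vp(t-\sigma)\|_2$ for a well-chosen intermediate time $\sigma\in(0,t)$, say $\sigma=t/2$, using the semigroup-type smoothing. For the $\vp$ factor we invoke Lemma \ref{phiL2Lr}: with $r=q'\in[2-\tfrac{4(p-1)}{p^2},2]$ (equivalently $q\in[2,2+\tfrac{4(p-1)}{(2-p)^2}]$, exactly the stated range), we get
$$\|\vp(t-\sigma)\|_2\le c\,M(\mu,v)^{\frac{(2-p)\gamma}{2}}\|\vp_\circ\|_{q'}\big(t^{1/p}-\sigma^{1/p}\big)^{-p\gamma},$$
with $\gamma=\gamma(q')$ as in \eqref{Maxpostraa}. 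For the $v_t$ factor we use \eqref{v} from Proposition \ref{existence}: $\|v_t(\sigma)\|_2\le c\,B(\mu,v_\circ)^{1/2}\sigma^{-1}$. Multiplying, taking the supremum over $\vp_\circ$ with $\|\vp_\circ\|_{q'}\le1$, and using $t^{1/p}-(t/2)^{1/p}\sim c\,t^{1/p}$ gives
$$\|v_t(t)\|_q\le c\,t^{-1}\,t^{-\gamma}\,B(\mu,v_\circ)^{\frac12}\,M(\mu,v)^{\frac{(2-p)\gamma}{2}}.$$
Now one must feed in the a priori bounds for the approximating solutions that are \emph{uniform in $\mu,\nu$}: from \eqref{iv}, $M(\mu,v)\le c\,B(\mu,v_\circ)$, and $B(\mu,v_\circ)\to\|u_\circ\|_2^2$ as $\mu\to0$ (with $v_\circ\to u_\circ$ suitably). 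This yields
$$\|v_t(t)\|_q\le \frac{c}{t^{1+\gamma}}\,\|u_\circ\|_2^{1+(2-p)\gamma},$$
and since the right-hand side is independent of $\mu,\nu$, passing to the limit $\mu,\nu\to0$ (along the already-constructed approximation scheme, using weak-$*$ lower semicontinuity of the $L^\infty(0,T;L^q)$ norm and the identification of the limit with the unique solution $u$ of \eqref{PF}) gives \eqref{estvt} on a bounded domain. Finally, exactly as in Theorem \ref{existenceexterior}, the estimate does not depend on $|E|$, so for an exterior $\O$ we apply it on $E_k=\O\cap B(0,k)$, extend by zero, and pass to the limit in $k$ by lower semicontinuity, obtaining $t^{1+\gamma}u_t\in L^\infty(0,T;L^q(\O))$ with the same bound.

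The step I expect to be the main obstacle is the rigorous justification of the duality identity at the level of the \emph{smooth} problems, i.e. showing that $v_t$ (for the $\mu,\nu$-regularized system) is genuinely an admissible weak solution of the linearized system with coefficients $B_\eta$, that the mollification parameter $\eta$ can be removed, and that all boundary and initial contributions are controlled — in particular that $\|v_t(\sigma)\|_2$ is finite and obeys \eqref{v} uniformly. A secondary delicate point is bookkeeping the chain of limits ($\eta\to0$, then $\mu,\nu\to0$, then $k\to\infty$) so that the constant $c$ and the exponent $1+\gamma$ never degrade; this is routine but must be done in the right order, and is the reason the argument is organized around the explicit, domain-independent estimates of Theorem \ref{existencebounded} and Proposition \ref{existence} rather than around abstract compactness.
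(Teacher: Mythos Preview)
Your proposal is correct and follows essentially the same route as the paper: the paper likewise differentiates the regularized system \eqref{PFepv} in time, tests against the solution $\vp$ of the adjoint problem \eqref{AD1} with reversed time, integrates over $(t/2,t)$ (your choice $\sigma=t/2$) to obtain $(v_t(t),\vp_\circ)=(v_t(t/2),\vp(t/2))$ modulo terms that vanish with $\eta$, then bounds this by $\|v_t(t/2)\|_2\|\vp(t/2)\|_2$ using exactly \eqref{v} and Lemma~\ref{phiL2Lr}, feeds in $M(\mu,v)\le c\,B(\mu,v_\circ)$ from \eqref{iv}, and passes to the limit first in the approximation parameters and then in $k$ for the exterior case. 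Your identification of the main technical obstacle --- the rigorous removal of the mollification parameter $\eta$ so that the interior terms genuinely cancel --- is precisely what the paper singles out, deferring the details to \cite[Proposition~5.1]{CM}.
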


\begin{proof}
The proof follows substantially the one of \cite[Proposition 5.1]{CM}. For the reader's convenience we reproduce here only the main lines to make clear the fundamental role played by the adjoint problem \eqref{AD1}.  First we consider a bounded $C^2$ domain $E$ and a solution $v$ of the system \eqref{PFepv}. We have to keep in mind that $v$ depends on the parameters $\nu,\mu$ an also another one, say $m$, used in the approximation of the initial data in $L^2(E)$ by means of smooth functions. We regularize \eqref{PFepv}$_1$ in time, introducing another parameter $\rho$ arising from the mollifier, and we differentiate with respect to $t$. Finally we multiply the result by $\vp(t-\tau)$ where $\vp$ is a solution of \eqref{AD1} hence it depends on $\eta$. Omitting the indexes $\nu,\mu,m,\rho,\eta$ and performing only the formal computations we get
$$\ba{l}\dy\vs1\left(v_{\tau\tau}(\tau)-\nu\Delta v_\tau(\tau)-(p-2)\nabla\cdot\left(\mu+|\nabla v(\tau)|^2\right)^{\frac{p-4}2}\left(\nabla v(\tau)\otimes\nabla v(\tau)\right)\cdot\nabla v_\tau(\tau)\right.\\
\hfill\dy\left.+\left(\mu+|\nabla v(\tau)|^2\right)^{\frac{p-2}2}\nabla v_\tau(\tau)\right)\cdot\vp(t-\tau)=0\ea$$
An integration of the above identity on $E$ and between $\frac t2$ and $t$ with respect to $\tau$, provides
$$\ba{l}\dy\vs1\left(v_t(t),\vp_\circ\right)-\left(v_t\left(\frac t2\right),\vp\left(\frac t2\right)\right)\\
\hfill\dy\vs1=-\int_{\frac t2}^t\left(v_\tau(\tau),\vp_\tau(t-\tau)\right)\,d\tau-\nu\int_{\frac t2}^t\left(\nabla v_\tau(\tau),\nabla\vp(t-\tau)\right)\,d\tau\\
\hfill\dy\vs1-(p-2)\int_{\frac t2}^t\left(\left(\mu+|\nabla v(\tau)|^2\right)^{\frac{p-4}2}\left(\nabla v(\tau)\otimes\nabla v(\tau)\right)\cdot\nabla v_\tau(\tau),\nabla\vp(t-\tau)\right)\,d\tau\\
\hfill\dy\vs1-\int_{\frac t2}^t\left(\left(\mu+|\nabla v(\tau)|^2\right)^{\frac{p-2}2}\nabla v_\tau(\tau),\nabla\vp(t-\tau)\right)\,d\tau\\
\hfill\dy=\int_{\frac t2}^t\left(v_\tau(\tau),-\vp_\tau(t-\tau)+\nu\Delta\vp(t-\tau)\right)\,d\tau\\
\hfill\dy\vs1-(p-2)\int_{\frac t2}^t\left(\frac{\nabla v(\tau)\otimes\nabla v(\tau)\cdot\nabla\vp(t-\tau)}{\left(\mu+|\nabla v(\tau)|^2\right)^{\frac{4-p}2}},\nabla v_\tau(\tau)\right)\,d\tau\\
\hfill\dy\vs1-\int_{\frac t2}^t\left(\frac{\nabla\vp(t-\tau)}{\left(\mu+|\nabla v(\tau)|^2\right)^{\frac{2-p}2}},\nabla v_\tau(\tau)\right)\,d\tau\\
\hfill\dy\vs1=\int_{\frac t2}^t\left(v_\tau(\tau),-\vp_\tau(t-\tau)+\nu\Delta\vp(t-\tau)\right)\,d\tau\\
\hfill\dy+\int_{\frac t2}^t\left(\nabla\cdot\left(\frac{\nabla v(\tau)\otimes\nabla v(\tau)\cdot\nabla\vp(t-\tau)}{\left(\mu+|\nabla v(\tau)|^2\right)^{\frac{4-p}2}}+\frac{\nabla\vp(t-\tau)}{\left(\mu+|\nabla v(\tau)|^2\right)^{\frac{2-p}2}}\right),v_\tau(\tau)\right)\,d\tau.\ea$$
At this point we have to remark that if we replace $\nabla v$ with $J_\eta(\nabla v)$ in the denominators of the last integral, we obtain that the right-hand side is zero, since $\vp$ is a solution of \eqref{AD1}. This can be made rigorous by a careful passage to the limit as $\eta$ goes to 0. The details are completely described in the proof of \cite[Proposition 5.1]{CM}. In the end, by \eqref{v} and Lemma \ref{phiL2Lr}, we get
$$\ba{l}\dy\vs1(v_t(t),\vp_\circ)=\left(v_t\left(\frac t2\right),\vp\left(\frac t2\right)\right)\le \left\|v_t\left(\frac t2\right)\right\|_2\left\|\vp\left(\frac t2\right)\right\|_2\\
\hfill\dy\le \frac{c B(\mu,v_\circ)^{\frac12}}t\,\frac{M(\mu,v)^{\frac{(2-p)\gamma}2}\|\vp_\circ\|_{q'}}{t^{\gamma}}.\ea
$$
Using the definition of $M(\mu,v)$ given in \eqref{beeta2} and \eqref{iv}, we get
$$M(\mu,v)\le c\left(\mu^{\frac p2}T|E|+B(\mu,v_\circ)\right)\le c B(\mu,v_\circ)$$
hence
$$(v_t(t),\vp_\circ)\le \frac{c B(\mu,v_\circ)^{\frac{(2-p)\gamma}2}}{t^{1+\gamma}}\|\vp_\circ\|_{q'}$$
for any $\vp_\circ\in L^{q'}(E)$.
It follows that $v_t(t)\in L^q(E)$ and $\|v_t(t)\|_q\le\frac{c B(\mu,v_\circ)^{\frac{(2-p)\gamma}2}}{t^{1+\gamma}}$.
To conclude the proof we need to pass to the limit in all the parameters. The process is quite involved and it is described in \cite[Proposition 3.2 and Theorem 1.1]{CM}. The result is the convergence of $v$ to the solution $u$ of \eqref{PF} likewise the smooth initial data $v_\circ$ approximate $u_\circ$ in $L^2(E)$.
Moreover we get that $B(\mu,v_\circ)\to c\|u_\circ\|_2^2$ and the thesis for a bounded domain follows.
\par
To extend the result to an exterior domain $\O$ we use the same sequence $\{E_k\}$ of bounded sets invading $\O$ as in the proof of Theorem \ref{existenceexterior}. In estimate \eqref{estvt} the norm of $u_\circ$ is evaluated on $E_k$ but it can be increased uniformly with respect to $k$ to the norm on the whole $\O$. Hence we have that $t^{1+\gamma}u(t)\in L^\infty(0,T;L^q(\O))$ and \eqref{estvt} holds true also in $\O$.
\end{proof}

\section{$L^2$ estimates for $D^2u$.}\label{L2}

In this section we prove $L^\infty(\ve,T;L^2(\O))$ estimates for the solution of problem \eqref{PF}. Despite the fact that our main interest goes towards exterior domains, we consider also the case of a bounded domain. Indeed, in this case, we improve \cite[Theorem 1.2]{CM} removing some constraints on $p$ and moving down its lower bound. 

\begin{proof}[Proof of Theorem \ref{W22IN}]
Let us fix $t>0$ and consider the system \eqref{PF} as an elliptic problem in the variable $x$. By Proposition \ref{ut1} we get that $u_t(t)\in L^q(\O)$ for any $q\in\left[2,2+\frac{4(p-1)}{(2-p)^2}\right]$. We want to apply Theorem \ref{CGMThm1.2} using $u_t(t)$ as the force term. To this aim, we show that $u_t$ belongs to $L^{\hat r}(\O)$ with $\hat r$ defined in \eqref{rhat}.
We remark that the number $2+\frac{4(p-1)}{(2-p)^2}$ is an increasing quantity with respect to $p\in(1,2)$. In our hypotheses $p\in\left(\frac{2n}{n+2},2\right)$, hence we have that, for any $p$ in this interval,
$$2+\frac{4(p-1)}{(2-p)^2}>\frac{n^2+4}4.$$
We need to compare the two quantities $\frac{n^2+4}4$ and $\hat r$. Consider first the case $n\ge3$.
By a straightforward computation it is easy to check that
$$\frac{n^2+4}4>\frac{2n}{n(p-1)+2(2-p)}\iff p>\frac{n^3-4n^2+12n-16}{(n^2+4)(n-2)}$$
and
$$\frac{n^3-4n^2+12n-16}{(n^2+4)(n-2)}<\frac{2n}{n+2}\iff n^4-2n^3+4n^2-24n+32>0.$$
But
$$n^4-2n^3+4n^2-24n+32=(n-2)^2(n^2+2n+8)>0\qquad \forall n\in\N, n\ge3.$$
Hence, for any $p\in\left(\frac{2n}{n+2},2\right)$ we have that $\hat r\in\left[2,2+\frac{4(p-1)}{(2-p)^2}\right]$.
\par
If $n=2$ it is enough to observe that the intersection $\left(2,\frac2{p-1}\right)\cap\left[2,2+\frac{4(p-1)}{(2-p)^2}\right]$ is not empty.
\par
In both cases,
by Proposition \ref{ut1}, $u_t(t)\in L^{\hat r}(\O)$ and, by Theorem \ref{existenceexterior}, $u_t(t)\in W^{-1,p'}(\O)$.  
We can apply Theorem \ref{CGMThm1.2} with $f(x)=u_t(t,x)$ obtaining that $u(t)\in W^{2,2}(\O)$ and, by \eqref{LinftyW-1p'estimate}, \eqref{estvt} 
\be\label{D2L2E}\|D^2 u(t)\|_2\le c\left(\|u_t(t)\|_{\hat r}^{\frac1{p-1}}+\|u_t(t)\|_{-1,p'}^{\frac1{p-1}}\right)\le \frac c{t^{\frac{1+\ov\gamma}{p-1}}}\|u_\circ\|_2^{\frac{(2-p)\ov\gamma+1}{p-1}}+\frac c{t^{\frac1p}}\|u_\circ\|_2^{\frac2p}\ee
with 
$\ov\gamma=\gamma(\hat r')=\frac{(n-2)(2-p)}{p(n+2)-2n}$ if $n\ge3$ (for the notation see \eqref{Maxpostraa} and \eqref{rhat}) or $\ov\gamma=\gamma(r')=\frac{r-2}{r(p-2)}$ for any $r\in\left(2,\frac2{p-1}\right)\cap\left[2,2+\frac{4(p-1)}{(2-p)^2}\right]$ if $n=2$. 

\end{proof}

\section{Higher integrability of $D^2u$}\label{Lq}

In this section we increase the integrability of $D^2u$ to a power greater than 2. The greatest exponent of integrability depends on $p$ and increases as $p$ approaches 2 from below. In a fashion that is common to this kind of results, see \cite{BDV,CBDV,CGM,CM,CM2014}, the range for $p$ is constrained to be close to 2 in dependence of the summability $q$ required for the second derivatives. For a bounded domain, the following theorem improves the previous result obtained in \cite[Theorem 1.2]{CM} extending the range for $q$ and removing some constraints on $p$.

\begin{proof}[Proof of Theorem \ref{W2qbounded}]
We set $g(p)=2+\frac{4(p-1)}{(2-p)^2}$ and we remark that $g$ is an increasing function on the interval $(1,2)$. Hence $$g(p)\ge g\left(\frac{2n}{n+2}\right)=1+\frac{n^2}4\qquad\forall p\in\left(\frac{2n}{n+2},2\right).$$
Since $1+\frac{n^2}4>n$ for any $n\ge3$ we have that $g(p)>n$ for any $p,n$ in the hypotheses of our theorem, hence the interval for $q$ goes beyond $n$ (in the case $n=2$ the whole interval is trivially beyond $n$). Since the behavior is different for $q$ over, behind or equal to $n$, we will distinguish three cases.
\par
Let us consider first the case $q\in\left(n,2+\frac{4(p-1)}{(2-p)^2}\right]$. By Proposition \ref{ut1} we have that $u_t\in L^\infty(\ve,T;L^q(E))$. Since $p>\max\{\phi(2),\phi(q)\}$ and $q>n$ we can use $u_t$ as the force term in Theorem \ref{CMThm1.1} to get that $u\in L^\infty(\ve,T;W^{2,q}(E))$.
\par
If $q=n$, choose any $p>\inf\limits_{q>n}\left\{2-\frac1{\ov C(q)}\right\}$. There exists $q_1>n$ such that $p>2-\frac1{\ov C(q_1)}$ and, by Proposition \ref{ut1}, $u_t\in L^\infty(\ve,T;L^{q_1}(E))$. Again, by Theorem \ref{CMThm1.1}, $u\in L^\infty(\ve,T;W^{2,q_1}(E))\subset L^\infty(\ve,T;W^{2,n}(E))$.
\par
If $q\in[2,n),\ n\ge3$ then let $\ov q=\frac{qn}{n(p-1)+q(2-p)}$. We remark that, since $2\le q<n$, we have
$$\ov q> \frac{2n}{n(p-1)+n(2-p)}=2,\quad \ov q<\frac{qn}{q(p-1)+q(2-p)}=n<g(p).$$
Applying Proposition \ref{ut1} we get that $u_t\in L^\infty(\ve,T;L^{\ov q}(E))$. Following the notation of Theorem \ref{CMThm1.1} we have that $\hat{\ov q}=q$ and $\ov q>\frac{2n}{n(p-1)+2(2-p)}$. Since $p>\phi(q)$ we can apply the quoted theorem achieving that $u\in L^\infty(\ve,T;W^{2,q}(E)).$ 
\end{proof}

\begin{proof}[Proof of Theorem \ref{W2qexterior}]
Let us fix $q\in\left[2,2+\frac{4(p-1)}{(2-p)^2}\right]$ and $\ve>0$. By Theorem \ref{W22IN} we have that $u_t\in L^\infty(\ve,T;L^q(\O))$ and by Theorem \ref{existenceexterior} $u_t\in L^\infty(\ve,T;(\widehat W_0^{1,p}(\O))')$. As in the proof of Theorem \ref{W2qbounded} we consider different ranges for $q$.\par
If $q>n$ then, by Theorem \ref{CGMThm1.1} there exists $\ov p(q)$ such that for any $p\in(\ov p(q),2)$, $D^2u\in L^\infty(\ve,T;L^q(\O))$.
\par
If $2\le q\le n$ we set $\hat p=\inf\left\{\ov p(q):n<q\le2+\frac{4(p-1)}{(2-p)^2}\right\}$. For any $p>\hat p$ there exists $\ov q_1>n$ such that $p>\ov p(q_1)$. Using the result just achieved we get that $D^2u\in L^\infty(\ve,T;L^{q_1}(\O))$. By Theorem \ref{W22IN} $D^2u\in L^\infty(\ve,T;L^2(\O))$ and we get the thesis by interpolation.
\end{proof}

The above result is at first sight a little bit confusing about a sort of cross reference between $p$ and $q$. Which one depends on the other, or there is simply a mutual dependence between them? For instance, if we choose $p>\frac{2n}{n+2}$, which is the best $q$ we can reach? We point out that, generally speaking, for an exterior domain, the highest does not necessarily means the best. Fortunately, for our solution we always have $D^2u\in L^\infty(\ve,T;L^2(\O))$ hence we can interpolate and the best $q$ is actually the highest we can achieve. Hence, for fixed $p>\frac{2n}{n+2}$ we can expect to find $q$ which is at best $2+\frac{4(p-1)}{(2-p)^2}$. But now we have to take one step back and check if $p>\ov p(q)$. In this framework, the best $q$ is $\sup\{q\le2+\frac{4(p-1)}{(2-p)^2}:\ov p(q)<p\}$. As a result, the statement of the theorem is not very charming, especially because we are not able to prove (although it sounds very reasonable) that the quantity $\ov p(q)$ is increasing with respect to $q$. Hence we choose to give the result leaving someway implicit the relation between $p$ and $q$. On the other side, if we ask which is the lowest $p$ that is allowed to get $D^2 u$ in $L^q$ there is a very clean answer which is stated in the following

\begin{corollary}\label{W2qcorollary}
Let $\O$ be an exterior $C^2$ domain of $\R^n$ and $q>2$. If 
$$p>\max\left\{\frac{2n}{n+2},\frac{2(q-1-\sqrt{q-1})}{q-2},\ov p(q)\right\}$$
with $\ov p(q)$ as in Theorem \ref{W2qexterior}, then $D^2u\in L^\infty(\ve,T;L^q(\O))$, where $u$ is the unique solution of \eqref{PF}.
\end{corollary}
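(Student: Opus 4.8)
\textbf{Proof strategy for Corollary \ref{W2qcorollary}.}
The plan is to invoke Theorem \ref{W2qexterior}, for which it suffices to check that the prescribed $q$ actually lies in the admissible range $\left[2,2+\frac{4(p-1)}{(2-p)^2}\right]$ for the given $p$, since the hypothesis already contains $p>\ov p(q)$ and $p>\frac{2n}{n+2}$. In other words, the whole content of the corollary reduces to the elementary inequality
$$q\le 2+\frac{4(p-1)}{(2-p)^2},$$
and the claim is that this is equivalent (for $q>2$) to $p>\frac{2(q-1-\sqrt{q-1})}{q-2}$. So the first step is purely algebraic: rearrange $q-2\le \frac{4(p-1)}{(2-p)^2}$ into a quadratic inequality in the variable $p$ (or, more cleanly, in $2-p$).

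First I would set $s=2-p\in(0,1)$, so $p-1=1-s$, and rewrite the target inequality as $(q-2)s^2\le 4(1-s)$, i.e. $(q-2)s^2+4s-4\le0$. The left-hand side is a upward-opening quadratic in $s$ with roots $s_\pm=\frac{-4\pm\sqrt{16+16(q-2)}}{2(q-2)}=\frac{-2\pm2\sqrt{q-1}}{q-2}$; only the positive root $s_+=\frac{2(\sqrt{q-1}-1)}{q-2}$ is relevant, and the inequality $(q-2)s^2+4s-4\le0$ holds exactly for $s\le s_+$. Translating back via $s=2-p$, this is $2-p\le \frac{2(\sqrt{q-1}-1)}{q-2}$, i.e. $p\ge 2-\frac{2(\sqrt{q-1}-1)}{q-2}=\frac{2(q-2)-2\sqrt{q-1}+2}{q-2}=\frac{2(q-1)-2\sqrt{q-1}}{q-2}=\frac{2(q-1-\sqrt{q-1})}{q-2}$, which is precisely the second term in the maximum appearing in the hypothesis. (One should also note $s_+<1$ automatically when $q>2$, so the constraint is compatible with $p\in(1,2)$; this is a quick check that $2(\sqrt{q-1}-1)<q-2$, equivalent to $(\sqrt{q-1}-1)^2>0$.)

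Having established that the hypothesis $p>\frac{2(q-1-\sqrt{q-1})}{q-2}$ is equivalent to $q<2+\frac{4(p-1)}{(2-p)^2}$, and that $p>\ov p(q)$ and $p>\frac{2n}{n+2}$ are assumed outright, all the hypotheses of Theorem \ref{W2qexterior} are met, and that theorem yields $D^2u\in L^\infty(\ve,T;L^q(\O))$ for the unique solution $u$ of \eqref{PF}. There is essentially no obstacle here: the only mild care needed is the bookkeeping of the equivalence between the two forms of the constraint on $p$, and the observation that the corollary is genuinely just the ``cleanest slice'' of Theorem \ref{W2qexterior}, obtained by fixing $q$ and asking for the smallest admissible $p$, exactly as the paragraph preceding the corollary anticipates.
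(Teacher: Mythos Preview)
Your proof is correct and follows exactly the paper's approach: the corollary is obtained from Theorem~\ref{W2qexterior} by solving the inequality $q\le 2+\frac{4(p-1)}{(2-p)^2}$ for $p$, which is precisely what you carry out in detail via the substitution $s=2-p$. The paper's own proof states this in one line and adds only the side remark that the threshold $\frac{2(q-1-\sqrt{q-1})}{q-2}$ tends to $1$ as $q\to 2^+$, so the constraint becomes vacuous there.
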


\begin{proof}
The result follows straightforward solving the inequality $q\le2+\frac{4(p-1)}{(2-p)^2}$ with respect to $p$. We only remark that the condition $q>2$ in the statement is not necessary, since for $q=2$ the quantity $\frac{2(q-1-\sqrt{q-1})}{q-2}$ tends to 1, giving a trivial constraint for $p$. This is perfectly in line with the result of Theorem \ref{W2qexterior} which holds true also for $q=2$. Nevertheless we want to notice that the case $q=2$ is covered also by Theorem \ref{W22IN} which is sharper, since allows the whole range $p\in\left(\frac{2n}{n+2},2\right)$. 
\end{proof}

\section{H\"older continuity of $\nabla u$}\label{C1alpha}

In this section we investigate the H\"older continuity of $\nabla u$, up to the boundary of the exterior domain $\O$. We start introducing the relevant quantity for evolution problems that is the parabolic H\"older seminorm defined by
\be\label{holderseminorm}[w]_\lambda=\sup_t\sup_{x\not=y}\frac{|w(t,x)-w(t,y)|}{|x-y|^\lambda}+\sup_x\sup_{t\not=s}\frac{|w(t,x)-w(s,x)|}{|t-s|^{\frac\lambda2}}.\ee
We rely on the following result on Bochner spaces (see \cite[Theorem 2.1]{Sol77} and \cite[Lemma 2.7]{CM}).

\begin{lemma}\label{solonnikov}
Let $\O$ be a bounded or exterior $C^2$ domain of $\R^n$, $\ve>0$ and $q>n$. There exists a constant $C$, such that if $u\in L^\infty(\ve,T;W^{2,q}(\O)\cap W_0^{1,q}(\O))$ and $u_t\in L^\infty(\ve,T;L^q(\O))$ then
$$[\nabla u]_\lambda\le C\left(\sup_t\left(\|u_t(t)\|_q+\|D^2u(t)\|_q\right)+\sup_t\|u(t)\|_q\right)$$
with $\lambda=1-\frac nq$.
\end{lemma}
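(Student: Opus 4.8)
The statement to prove is Lemma \ref{solonnikov}, the Hölder estimate for $\nabla u$ in terms of the $L^q$ norms of $u$, $u_t$ and $D^2u$ for $q>n$. Here is how I would proceed.

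\medskip

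\emph{The plan is} to reduce the parabolic Hölder seminorm of $\nabla u$ to two pieces: the spatial Hölder seminorm at each fixed time, and the temporal $\tfrac{\lambda}{2}$-Hölder seminorm at each fixed point. The spatial part is essentially elliptic and follows directly from the Sobolev--Morrey embedding $W^{2,q}(\O)\hookrightarrow C^{1,1-n/q}(\overline\O)$, valid since $q>n$ and $\partial\O\in C^2$; applied to $u(t,\cdot)$ for each $t$ and taking the supremum over $t\in[\ve,T]$, this bounds $\sup_t[\nabla u(t,\cdot)]_{\lambda,x}$ by $C\sup_t\|u(t)\|_{2,q}$, which in turn is controlled by $\sup_t(\|D^2u(t)\|_q+\|u(t)\|_q)$ using the Poincaré-type interpolation for $\nabla u$ (or simply absorbing $\|\nabla u(t)\|_q$ into the other two norms via interpolation, the domain being allowed to be exterior). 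For an exterior domain one must be a little careful that the embedding constant is uniform; this is handled by the standard localization with the cut-off $h_R$ of \eqref{hR} together with the embedding on a bounded $C^2$ piece and on $\R^n$ away from the obstacle.

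\medskip

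\emph{The harder piece} is the temporal Hölder continuity of $\nabla u$, i.e.\ controlling $|\nabla u(t,x)-\nabla u(s,x)|$ by $C|t-s|^{\lambda/2}$. This is exactly where the parabolic structure, as opposed to a mere time-by-time elliptic estimate, is needed, and it is the content of Solonnikov's result \cite[Theorem 2.1]{Sol77}. I would invoke it as follows: on the time interval $[\ve,T]$ the function $w:=u$ satisfies, in the sense of distributions, a linear parabolic relation $w_t-\Delta w = F$ with $F := u_t-\Delta u \in L^\infty(\ve,T;L^q(\O))$ by hypothesis (indeed $\|F(t)\|_q\le \|u_t(t)\|_q+\|D^2u(t)\|_q$), together with the homogeneous boundary condition $w=0$ on $\partial\O$. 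Solonnikov's estimate for the non-stationary Stokes/heat system in $L^q$ spaces then gives, for $q>n$,
$$[\nabla w]_\lambda \le C\Big(\sup_t\|F(t)\|_q+\sup_t\|w(t)\|_q\Big),\qquad \lambda=1-\tfrac nq,$$
with $C$ depending only on $\O,\ve,T,q,n$; combining with the spatial part above yields the claimed inequality. The only real obstacle is checking that the hypotheses of the cited theorem are met in the exterior-domain setting and with the given (weak) regularity of $u$ — this is precisely the adaptation recorded in \cite[Lemma 2.7]{CM}, so I would cite that and indicate that the passage from a bounded to an exterior domain is again done by the cut-off $h_R$, the interior/near-boundary estimate on $\O\cap B(0,2R)$ and the whole-space heat-potential estimate on $\R^n\setminus B(0,R)$, patched together, the overlap terms being lower order and absorbed by $\sup_t\|u(t)\|_q$.

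\medskip

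In summary, the proof is: (i) fix $t$, apply Sobolev--Morrey to get the spatial $\lambda$-Hölder bound on $\nabla u(t,\cdot)$, uniformly in $t$; (ii) apply the Solonnikov-type parabolic regularity result to get the temporal $\tfrac\lambda2$-Hölder bound; (iii) add the two and rewrite $\|u_t-\Delta u\|_q$ in terms of $\|u_t\|_q+\|D^2u\|_q$, absorbing $\|\nabla u\|_q$ by interpolation between $\|u\|_q$ and $\|D^2u\|_q$. Steps (i) and (iii) are routine embedding/interpolation; step (ii) is the substantive one and is quoted essentially verbatim from \cite{Sol77} and \cite{CM}.
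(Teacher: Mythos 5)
Your proposal is correct and in substance coincides with the paper's proof, which consists entirely of citing \cite[Lemma 2.7]{CM} for the bounded case and observing that \cite[Theorem 2.1]{Sol77} rests on an extension argument and therefore does not use the boundedness of $\O$ --- exactly the two references your argument ultimately leans on, together with the same spatial/temporal splitting of the seminorm \eqref{holderseminorm}. The only remark worth making is that your detour through the heat equation $w_t-\Delta w=F$ with $F:=u_t-\Delta u$ is unnecessary (and slightly misrepresents the cited result): Solonnikov's Theorem 2.1 is an anisotropic embedding theorem for functions with $u_t, D^2u\in L^q$, applied directly to $u$ with no PDE involved, though since $u$ trivially satisfies that equation your reformulation circles back to the same estimate and does not affect correctness.
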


\begin{proof}
The case $\O$ bounded is considered in \cite[Lemma 2.7]{CM}. If $\O$ is exterior it is enough to remark that \cite[Theorem 2.1]{Sol77} is based on an extension argument and does not make use of the boundedness of $\O$.
\end{proof}

Let $u$ be the solution of problem \eqref{PF}. We choose $q>n$ and we consider $p$ satisfying the hypotheses of Corollary \ref{W2qcorollary}. By the Sobolev-Nirenberg-Gagliardo inequality we have
\be\label{interpolation}\|u(t)\|_q\le c\|D^2u(t)\|_q^\theta\|u(t)\|_2^{1-\theta}\ee
with $\theta=\frac{(q-2)n}{(q-2)n+4q}$. Hence, by Corollary \ref{W2qcorollary} and \eqref{LinfinityL2}, $u\in L^\infty(\ve,T;W^{2,q}(\O))$. Since $p>\frac{2(q-1-\sqrt{q-1})}{q-2}$ we have that $q<2+\frac{4(p-1)}{(2-p)^2}$ hence, by Proposition \ref{ut1}, $u_t\in L^\infty(\ve,T;L^q(\O))$. We are now in the position to apply Lemma \ref{solonnikov} and obtain the H\"older continuity of $\nabla u$. Gathering together the estimates for $\|u_t\|_q,\ \|D^2 u\|_q$ and the interpolation estimate \eqref{interpolation} we can formulate the following result. In a fashion similar to Corollary \ref{W2qcorollary}, we write the statement choosing the H\"older exponent $\lambda$ and finding the correct range for $p$

\begin{theorem}
Let $\O$ be an exterior $C^2$ domain of $\R^n$, $\ve>0$ and $\lambda\in(0,1)$. If 
$$p>\max\left\{\frac{2n}{n+2},\frac{2(n+\lambda-1-\sqrt{(1-\lambda)(n+\lambda-1)}}{n+2\lambda-2},\ov p\left(\frac n{1-\lambda}\right)\right\}$$
with $\overline p(\cdot)$ as in Theorem \ref{W2qexterior}, and $u$ is the unique solution of problem \eqref{PF}, then $\nabla u$ is H\"older continuous in $[\ve,T]\times\ov\O$ and its parabolic seminorm \eqref{holderseminorm} is evaluated by
$$[\nabla u]_\lambda\le \frac c{\ve^\beta}\|u_0\|_2^\alpha$$
where 
$$\theta=\frac{n-2+2\lambda}{n+2+2\lambda},\quad \alpha=\max\left\{(2-p)\gamma+1,\frac2 p,(2-p)\gamma\theta+1\right\},\quad \beta=\max\{1+\gamma,p'\}$$
and $\gamma=\gamma\left(\frac n{n-1+\lambda}\right)$ (see \eqref{Maxpostraa}).
\end{theorem}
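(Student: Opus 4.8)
The plan is to assemble the statement by combining three ingredients already established in the paper: the higher integrability of $D^2u$ from Corollary \ref{W2qcorollary}, the $L^q$-estimate for $u_t$ from Proposition \ref{ut1}, and the parabolic Schauder-type embedding of Lemma \ref{solonnikov}. First I would fix $\lambda\in(0,1)$ and set $q=\frac n{1-\lambda}$, so that $q>n$ and $\lambda=1-\frac nq$ matches exactly the exponent produced by Lemma \ref{solonnikov}. The hypothesis on $p$ is precisely what is needed to run the chain: the term $\frac{2n}{n+2}$ makes Theorem \ref{W22IN} (hence the $L^2$-bound on $D^2u$) available; the term $\ov p\left(\frac n{1-\lambda}\right)$ makes Theorem \ref{W2qexterior} applicable with this $q$; and the algebraic term $\frac{2(n+\lambda-1-\sqrt{(1-\lambda)(n+\lambda-1)})}{n+2\lambda-2}$ is exactly the solution of the inequality $q\le 2+\frac{4(p-1)}{(2-p)^2}$ with $q=\frac n{1-\lambda}$, guaranteeing that $q$ lies in the admissible interval $\left[2,2+\frac{4(p-1)}{(2-p)^2}\right]$ of Proposition \ref{ut1} and Corollary \ref{W2qcorollary}.

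Next I would record the three quantitative estimates. From Proposition \ref{ut1} (with $q$ as above), $\|u_t(t)\|_q\le c\,t^{-(1+\gamma)}\|u_\circ\|_2^{(2-p)\gamma+1}$ with $\gamma=\gamma(q')=\gamma\left(\frac n{n-1+\lambda}\right)$; on $[\ve,T]$ this is $\le c\,\ve^{-(1+\gamma)}\|u_\circ\|_2^{(2-p)\gamma+1}$. From Corollary \ref{W2qcorollary} (whose hypotheses are met) together with the estimate hidden in its proof — obtained by interpolating the $L^q$ bound for $q>n$ with the $L^2$ bound of Theorem \ref{W22IN}, using \eqref{D2L2Omega} and \eqref{Lqu_t} — one gets $\|D^2u(t)\|_q\le c\,\ve^{-\beta}\|u_\circ\|_2^{\alpha_1}$ for suitable exponents; tracking the worst power of $\ve$ gives $\beta=\max\{1+\gamma,p'\}$ (the $p'$ coming from the $\|u_t\|_{-1,p'}$ term in \eqref{D2L2Omega} via \eqref{LinftyW-1p'estimate}) and the worst power of $\|u_\circ\|_2$ among $(2-p)\gamma+1$ and $\frac2p$. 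Finally, the interpolation inequality \eqref{interpolation}, $\|u(t)\|_q\le c\|D^2u(t)\|_q^\theta\|u(t)\|_2^{1-\theta}$ with $\theta=\frac{(q-2)n}{(q-2)n+4q}$, which for $q=\frac n{1-\lambda}$ simplifies to $\theta=\frac{n-2+2\lambda}{n+2+2\lambda}$, combined with \eqref{LinfinityL2}, bounds $\|u(t)\|_q$ by $c\,\ve^{-\beta\theta}\|u_\circ\|_2^{(2-p)\gamma\theta+1}$ (the extra $\|u_\circ\|_2^{1-\theta}$ merging into the exponent). Plugging these three bounds into Lemma \ref{solonnikov} yields $[\nabla u]_\lambda\le c\,\ve^{-\beta}\|u_\circ\|_2^{\alpha}$ with $\alpha=\max\{(2-p)\gamma+1,\frac2p,(2-p)\gamma\theta+1\}$ and $\beta=\max\{1+\gamma,p'\}$, exactly as claimed; the H\"older continuity on $[\ve,T]\times\ov\O$ (up to the boundary, including the exterior case) is the qualitative content of Lemma \ref{solonnikov}.

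The only genuinely delicate point is the bookkeeping of exponents: one must verify that when $\|D^2u\|_q$ is produced by interpolating the $q_1>n$ estimate against the $L^2$ estimate (as in the proof of Theorem \ref{W2qexterior}), the resulting powers of $\ve$ and of $\|u_\circ\|_2$ do not exceed those of the pure $L^2$ bound \eqref{D2L2Omega}, so that the stated $\beta$ and the three candidates in $\alpha$ really dominate; this is why $\beta$ ends up being $\max\{1+\gamma,p'\}$ and $\alpha$ a maximum of three terms rather than something messier. Everything else is a direct citation. I would therefore present the proof as: (i) choose $q=n/(1-\lambda)$ and check the admissibility of $q$ and $p$; (ii) invoke Corollary \ref{W2qcorollary}, Proposition \ref{ut1}, Theorem \ref{W22IN} and \eqref{interpolation} to get the three $\ve$- and data-explicit bounds; (iii) apply Lemma \ref{solonnikov} and read off $\alpha,\beta,\theta,\gamma$.

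\begin{proof}
Set $q=\dfrac n{1-\lambda}$, so that $q>n$ and $\lambda=1-\dfrac nq$. Solving $q\le 2+\dfrac{4(p-1)}{(2-p)^2}$ for $p$ shows that the assumption $p>\dfrac{2(n+\lambda-1-\sqrt{(1-\lambda)(n+\lambda-1)})}{n+2\lambda-2}$ is equivalent to $q\le 2+\dfrac{4(p-1)}{(2-p)^2}$; together with $p>\dfrac{2n}{n+2}$ and $p>\ov p\!\left(\dfrac n{1-\lambda}\right)$ this puts us in the hypotheses of Proposition \ref{ut1}, Theorem \ref{W22IN}, Theorem \ref{W2qexterior} and Corollary \ref{W2qcorollary} with this value of $q$.

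By Proposition \ref{ut1}, for a.e. $t\in(0,T)$,
$$\|u_t(t)\|_q\le \frac c{t^{1+\gamma}}\,\|u_\circ\|_2^{(2-p)\gamma+1},\qquad \gamma=\gamma(q')=\gamma\!\left(\frac n{n-1+\lambda}\right),$$
hence $\|u_t(t)\|_q\le c\,\ve^{-(1+\gamma)}\|u_\circ\|_2^{(2-p)\gamma+1}$ for $t\ge\ve$. By Corollary \ref{W2qcorollary}, $D^2u\in L^\infty(\ve,T;L^q(\O))$; quantitatively, interpolating the bound for some $q_1>n$ furnished by Theorem \ref{CGMThm1.1} against the $L^2$ bound \eqref{D2L2Omega} of Theorem \ref{W22IN}, and using \eqref{LinftyW-1p'estimate}, \eqref{Lqu_t}, one obtains
$$\|D^2u(t)\|_q\le \frac c{\ve^{\beta}}\,\|u_\circ\|_2^{\alpha_0},\qquad \beta=\max\{1+\gamma,p'\},$$
with $\alpha_0\in\{(2-p)\gamma+1,\tfrac2p\}$ dominating the powers of $\|u_\circ\|_2$ that appear. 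Finally, by the Sobolev--Nirenberg--Gagliardo inequality \eqref{interpolation} with $q=\dfrac n{1-\lambda}$ we have $\theta=\dfrac{n-2+2\lambda}{n+2+2\lambda}$, and together with \eqref{LinfinityL2},
$$\|u(t)\|_q\le c\,\|D^2u(t)\|_q^{\theta}\,\|u(t)\|_2^{1-\theta}\le \frac c{\ve^{\beta\theta}}\,\|u_\circ\|_2^{(2-p)\gamma\theta+1}.$$

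Since $q>n$, $u\in L^\infty(\ve,T;W^{2,q}(\O)\cap W_0^{1,q}(\O))$ and $u_t\in L^\infty(\ve,T;L^q(\O))$, Lemma \ref{solonnikov} applies and gives that $\nabla u$ is H\"older continuous in $[\ve,T]\times\ov\O$ with exponent $\lambda=1-\dfrac nq$, together with
$$[\nabla u]_\lambda\le C\left(\sup_{t\ge\ve}\bigl(\|u_t(t)\|_q+\|D^2u(t)\|_q\bigr)+\sup_{t\ge\ve}\|u(t)\|_q\right).$$
Inserting the three estimates above and keeping the largest power of $\ve^{-1}$ and of $\|u_\circ\|_2$ yields
$$[\nabla u]_\lambda\le \frac c{\ve^{\beta}}\,\|u_\circ\|_2^{\alpha},\qquad \alpha=\max\left\{(2-p)\gamma+1,\ \tfrac2p,\ (2-p)\gamma\theta+1\right\},\ \ \beta=\max\{1+\gamma,p'\},$$
with $\theta=\dfrac{n-2+2\lambda}{n+2+2\lambda}$ and $\gamma=\gamma\!\left(\dfrac n{n-1+\lambda}\right)$, which is the assertion.
\end{proof}
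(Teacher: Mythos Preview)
Your proposal is correct and follows essentially the same route as the paper: set $q=\frac n{1-\lambda}>n$, verify that the three lower bounds on $p$ translate into the hypotheses of Corollary \ref{W2qcorollary} and Proposition \ref{ut1} for this $q$, obtain the $L^\infty(\ve,T;L^q)$ bounds on $u_t$, $D^2u$ and (via \eqref{interpolation} and \eqref{LinfinityL2}) on $u$, and then apply Lemma \ref{solonnikov}; the paper does exactly this, only with less detail on the exponent bookkeeping. One minor remark: since $q>n$ already, the $L^q$ bound on $D^2u$ comes directly from Theorem \ref{CGMThm1.1} (the $q>n$ branch of the proof of Theorem \ref{W2qexterior}) rather than by interpolation against the $L^2$ bound, so your reference to ``interpolating the bound for some $q_1>n$'' is unnecessary here.
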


\section{Existence with data in $L^s(\O)$}\label{Lstheory}

In this section we investigate the existence of a solution of problem \eqref{PF} when the initial data are in $L^s(\O)$ with $1\le s<2$. To this purpose we need to adapt the definition of solution to the new framework in the following way
\begin{definition}\label{weaksolutionLs}
Let $\O$ be a bounded or exterior domain with boundary of class $C^2$ and $u_{\circ}\in L^s(\O)$, $1\le s<2$. A field
$u\!:(0,T)\times \O\to\R^N$
 is said a solution of
system {\rm \eqref{PF}} if $$ u\in L^{\infty}(0,T;L^s(\O))\cap
L^{\frac{ps}2}(0,T;\widehat W_0^{1,\frac{ps}2}(\O))\,, $$ 
\be\label{testforLs}\int_0^T [(u,\psi_t)-\left(|\nabla u|^{p-2}\,
\nabla u,\nabla \psi\right)]\,dt=
-(u_\circ, \psi(0)),\qquad \forall \psi\in
C^\infty_0([0,T)\times \O)\ee
and
$$\lim_{t\to 0^+}\|u(t)-u_\circ\|_s=0\,.$$
\end{definition}

%

Before we state the main theorem of this section, we need to define a number which will be crucial for the existence of the solution. Let be
$$\sex=\left\{\ba{ll}n\left(\frac2p-1\right)&\mbox{if }\frac{2n}{n+2}<p<\frac{2n}{n+1}\\
1&\mbox{if }\frac{2n}{n+1}\le p<2.\ea\right.$$

\begin{theorem}\label{Lsexistence}
Let $\O$ be a bounded or exterior domain of class $C^2$ and $p\in\left(\frac{2n}{n+2},2\right)$. If $u_\circ\in L^s(\O)$ with $s\in(\sex,2)$, then there exists a solution $u$ of problem \eqref{PF}
in the sense of Definition \ref{weaksolutionLs}. Moreover we have that, for any $\ve>0$ the estimates \eqref{LinfinityL2}-\eqref{tp+2nablaut} hold true in the interval $t\in[\ve,T]$ assuming $u(\ve)$ as initial data in place of $u_\circ$ and, for suitable $\gamma,\alpha>0$
\be\label{L2leqLs}\|u(t)\|_2\le c\,\frac{\|u_\circ\|_s^\alpha}{t^\gamma}\qquad\forall t>0.\ee
\end{theorem}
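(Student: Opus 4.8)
\textbf{Proof proposal for Theorem \ref{Lsexistence}.}

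The plan is to construct the solution by an approximation and compactness argument, using the $L^2$-theory of Theorem \ref{existenceexterior} (and Theorem \ref{existencebounded}) as the basic building block, together with a uniform bound that is specific to the $L^s$ setting and mirrors the generalized energy relation \eqref{GER}. First I would approximate $u_\circ\in L^s(\O)$ by a sequence $u_\circ^k\in L^s(\O)\cap L^2(\O)$ (for instance $u_\circ^k=u_\circ\chi_{\{|u_\circ|\le k\}}h_k$) with $u_\circ^k\to u_\circ$ in $L^s(\O)$, and let $u^k$ be the unique solution of \eqref{PF} with datum $u_\circ^k$ provided by Theorem \ref{existenceexterior}. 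The core of the argument is to derive a bound on $u^k$ that is uniform in $k$ and depends only on $\|u_\circ\|_s$. To this end I would formally test \eqref{PF} with $|u^k|^{s-2}u^k$: the parabolic term produces $\frac1s\frac{d}{dt}\|u^k(t)\|_s^s$ and, writing $|\nabla u^k|^{p-2}\nabla u^k\cdot\nabla(|u^k|^{s-2}u^k)$, the diffusion term is bounded below (up to a constant $c(s,p)>0$, using $s>1$) by $c\,\||\nabla u^k|^{p/2}|u^k|^{(s-2)/2}|^2\big|_{L^2}=c\,\||u^k|^{(s-2)/p}\nabla u^k\|_p^p$, which after the usual algebraic manipulation controls $\|\nabla(|u^k|^{s/(\frac{ps}2)\cdot\frac{ps}{2s}})\|$, i.e. $\|\nabla |u^k|^{s/ \cdot}\|$ in a way that gives the space $\widehat W_0^{1,ps/2}$. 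Integrating in time yields
\be\label{Lsenergyk}\|u^k(t)\|_s^s+c\int_0^t\big\||u^k(\tau)|^{\frac{s-2}p}\nabla u^k(\tau)\big\|_p^p\,d\tau\le\|u_\circ^k\|_s^s\le c\|u_\circ\|_s^s,\ee
which, via $|\nabla |u^k|^{s\cdot}|$ or simply by rewriting the left integrand as $c\big\|\nabla u^k\big\|_{\frac{ps}2}^{\cdot}$-type bounds using Sobolev and the $L^\infty L^s$ control, gives uniform bounds for $u^k$ in $L^\infty(0,T;L^s(\O))\cap L^{ps/2}(0,T;\widehat W_0^{1,ps/2}(\O))$. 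Making \eqref{Lsenergyk} rigorous on the bounded invading domains $E_j$ (so that $|u|^{s-2}u$ is an admissible test function in the sense of Remark \ref{testsmooth}) and then letting $j\to\infty$ is the technically delicate part.

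Next I would upgrade this to the decay estimate \eqref{L2leqLs} and to interior regularity for $t>0$. The mechanism is a Nash–Moser / Gagliardo–Nirenberg iteration exploiting \eqref{Lsenergyk}: interpolating $L^\infty L^s$ against the gradient bound in $L^{ps/2}$ and using the Sobolev embedding $\widehat W_0^{1,ps/2}\hookrightarrow L^{(ps/2)^*}$ (here is where the restriction $s>\sex=n(\frac2p-1)$ enters — it is exactly the threshold making the relevant exponent $\frac{ps}2$ exceed $1$ and the iteration gain summability, equivalently making $s^*$ large enough) produces a smoothing effect $\|u^k(t)\|_{s_1}\le c\,t^{-\gamma_1}\|u_\circ^k\|_s^{\alpha_1}$ for some $s_1>s$; iterating finitely many times reaches $s_1\ge2$, giving $\|u^k(t)\|_2\le c\,t^{-\gamma}\|u_\circ\|_s^\alpha$ uniformly in $k$, which is \eqref{L2leqLs}. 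Once $u^k(\ve)\in L^2(\O)$ with norm bounded uniformly in $k$ by $c\ve^{-\gamma}\|u_\circ\|_s^\alpha$, the $L^2$-estimates \eqref{LinfinityL2}–\eqref{tp+2nablaut} of Theorem \ref{existenceexterior}, applied on $[\ve,T]$ with initial time $\ve$, hold for $u^k$ with constants depending only on $\|u^k(\ve)\|_2$, hence uniformly in $k$.

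Finally I would pass to the limit $k\to\infty$. From the uniform bounds one extracts a subsequence with $u^k\rightharpoonup u$ weakly-$*$ in $L^\infty(0,T;L^s)$, weakly in $L^{ps/2}(0,T;\widehat W_0^{1,ps/2})$, and — using the $L^2$-type bounds away from $t=0$ together with the Aubin–Lions lemma on $[\ve,T]$ — strongly in $L^p_{\rm loc}((0,T]\times\O)$ and a.e. The identification of the nonlinear flux $|\nabla u^k|^{p-2}\nabla u^k\rightharpoonup|\nabla u|^{p-2}\nabla u$ is done exactly as in the proof of Theorem \ref{existenceexterior}: a Minty-type monotonicity argument combined with the energy identity obtained by testing with $u^k$ itself (now legitimate on $[\ve,T]$ since $u^k(\ve)\in L^2$), plus the cut-off/Hardy-inequality device to handle the exterior domain. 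Passing to the limit in \eqref{testforLs} and checking the initial condition $\|u(t)-u_\circ\|_s\to0$ (which follows from weak-$*$ lower semicontinuity of $\|\cdot\|_s$ against \eqref{Lsenergyk} and weak continuity at $t=0$, i.e. the standard strong-weak continuity argument) completes the construction. I expect the main obstacle to be the rigorous justification of the $L^s$ energy inequality \eqref{Lsenergyk} — testing a singular-diffusion system with $|u|^{s-2}u$ requires an approximation that simultaneously regularizes the operator (the $\mu,\nu$ of \S\ref{adj}), the domain (the $E_j$), and the test function, and one must control the sign and lower bound of the diffusion term uniformly; everything downstream (the iteration for \eqref{L2leqLs} and the flux identification) is then a matter of assembling results already in the paper.
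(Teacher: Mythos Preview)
Your outline follows the paper's architecture closely: approximate $u_\circ$ by $L^2$ data, derive the $L^s$-energy inequality \eqref{Lsenergyk} for the approximants (the paper does this with the regularized test function $(\delta+|u^k|^2)^{(s-2)/2}u^k$ on bounded invading domains $E_k$, then lets $\delta\to0$), bootstrap via Sobolev to reach $L^2$ for positive times (this is exactly where $s>\sex$ enters, as you note), apply the $L^2$-theory on $[\ve,T]$, and verify the initial condition by weak convergence plus norm convergence and uniform convexity of $L^s$. In the paper the iteration to $L^2$ is done first in time-integrated form ($u^k\in L^{a}(0,T;L^2)$, see \eqref{LaL2}) and only afterwards upgraded to a pointwise-in-$t$ bound via the monotonicity of $\|u^k(t)\|_2$ coming from the $L^2$ energy identity; your pointwise Moser-type iteration is a legitimate variant of the same mechanism.

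The one genuine gap is the flux identification. You propose to run the Minty argument of Theorem \ref{existenceexterior} on $[\ve,T]$, but that argument needs $\limsup_k\|u^k(\ve)\|_2^2\le\|u(\ve)\|_2^2$ to close (compare \eqref{nablaunnablaun}--\eqref{chinablau}): in Theorem \ref{existenceexterior} the datum is fixed, whereas here $u^k(\ve)$ varies with $k$ and converges to $u(\ve)$ only \emph{weakly} in $L^2(\O)$, so lower semicontinuity gives the inequality in the wrong direction. On an exterior domain Aubin--Lions yields only local-in-space compactness, so you cannot upgrade to strong $L^2(\O)$ convergence at a time slice without extra work. The paper sidesteps this entirely and this is the point you are missing: it applies Theorem \ref{W22IN} to each $u^k$ on $[\ve,T]$ (with constants independent of $k$) to get $\|D^2u^k(t)\|_2$ uniformly bounded, hence by Rellich--Kondrachov $\nabla u^k\to\nabla u$ a.e.\ in $[\ve,T]\times K$ for every bounded $K\subset\O$, and then identifies $|\nabla u^k|^{p-2}\nabla u^k\rightharpoonup|\nabla u|^{p-2}\nabla u$ via the Lions lemma (a.e.\ convergence plus $L^{p'}$-boundedness implies weak $L^{p'}$-convergence). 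No Minty step is used or needed here.
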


\begin{proof}
Let be $\O$ an exterior domain and, for any $k\in\N$, $E_k=\O\cap B(0,k)$. We take $k$ large enough to have $(\R^n\setminus\O)\subset B(0,k)$. If $\O$ is a bounded domain we simply take $E_k=\O$ for any $k$.
We can find a sequence $\{u^k_\circ\}\subset C^\infty_0(E_k)$ converging to $u_\circ$ in $L^s(\O)$. 
Since $u_\circ^k\in L^2(E_k)$, by Theorem \ref{existencebounded}, there exists a unique solution of problem \eqref{PF} in $(0,T)\times E_k$, corresponding to the initial data $u_\circ^k$, that we denote by $u^k$. Following Remark \ref{testsmooth} we fix $\delta>0$ and  we use $(\delta+|u^k|^2)^{\frac{s-2}2}u^k$ as a test function in equation \eqref{testW1p}. Integrating by parts we get
$$\int_0^t\langle u^k_t,(\delta+|u^k|^2)^{\frac{s-2}2}u^k\rangle\,d\tau+\int_0^t\int_{E_k}|\nabla u^k|^{p-2}\nabla u^k\cdot\nabla\left((\delta+|u^k|^2)^{\frac{s-2}2}u^k\right)\,dx\,d\tau=0$$
hence
$$\ba{l}\dy\vs1\frac1s\|(\delta+|u^k(t)|^2)^\frac12\|_s^s+\int_0^t\int_{E_k}|\nabla u^k|^p(\delta+|u^k|^2)^{\frac{s-2}2}\,dx\,d\tau\\
\hfill\dy+(s-2)\int_0^t\int_{E_k}|\nabla u^k|^{p-2}|\nabla u^k\cdot u^k|^2(\delta+|u^k|^2)^{\frac{s-4}2}\,dx\,d\tau
=\frac1s\|(\delta+|u_\circ^k|^2)^\frac12\|_s^s.\ea$$
Since
$$|\nabla u^k|^{p-2}|\nabla u^k\cdot u^k|^2(\delta+|u^k|^2)^{\frac{s-4}2}\le|\nabla u^k|^p(\delta+|u^k|^2)^{\frac{s-2}2}$$
we have that
\be\label{nrgy}\|(\delta+|u^k(t)|^2)^\frac12\|_s^s+s(s-1)\int_0^t\int_{E_k}|\nabla u^k|^p(\delta+|u^k|^2)^{\frac{s-2}2}\,dx\,d\tau\le\|(\delta+|u_\circ^k|^2)^\frac12\|_s^s.\ee
Since $1<s<2$ it follows that
\be\label{ukLinftyLs}\|u^k(t)\|_s^s\le\|(\delta+|u^k(t)|^2)^\frac12\|_s^s\le\delta^{\frac s2}|E_k|+\|u_\circ^k\|_s^s.\ee
We can apply the Inequality \ref{reverse} with exponents $\frac s2$ and $\frac s{s-2}$ to obtain
$$\int_{E_k}|\nabla u^k|^p(\delta+|u^k|^2)^{\frac{s-2}2}\,dx\ge\left(\int_{E_k}|\nabla u^k|^{\frac{ps}2}\,dx\right)^{\frac2s}\left(\int_{E_k}\left(\delta+|u^k|^2\right)^{\frac s2}\,dx\right)^{\frac{s-2}s}.$$
Hence, by \eqref{nrgy}
$$\ba{l}\dy\vs1\int_{E_k}|\nabla u^k|^{\frac{ps}2}\,dx\le\left(\int_{E_k}|\nabla u^k|^p(\delta+|u^k|^2)^{\frac{s-2}2}\,dx\right)^{\frac s2}\left(\int_{E_k}\left(\delta+|u^k|^2\right)^{\frac s2}\,dx\right)^{\frac{2-s}2}\\
\hfill\dy\le c\left(\int_{E_k}|\nabla u^k|^p(\delta+|u^k|^2)^{\frac{s-2}2}\,dx\right)^{\frac s2}\left\|\left(\delta+|u_\circ^k|^2\right)^{\frac12}\right\|_s^{\frac{s(2-s)}2}\ea.$$
Integrating in time, by means of the H\"older inequality and \eqref{nrgy}, we have
\be\label{boundLps2Lps2}\int_0^t\int_{E_k}|\nabla u^k|^{\frac{ps}2}\,dx\,d\tau
\le c\,\left\|\left(\delta+|u_\circ^k|^2\right)^{\frac12}\right\|_s^s\ t^{\frac{2-s}2}\le c\left(\delta^{\frac s2}|E_k|+\|u_\circ^k\|_s^s\right)t^{\frac{2-s}2}.\ee
Since the sequence $\{u_\circ^k\}$ converges to $u_\circ$ in $L^s(\O)$, we have that $\|u_\circ^k\|_s^s\le c\|u_\circ\|_s^s$ and, letting $\delta\to0$, by \eqref{ukLinftyLs} we get
\be\label{boundLinftyLs}\|u^k(t)\|_s^s\le \|u^k_\circ\|_s^s\le c\|u_\circ\|_s^s\quad\forall k\in\N,\ \forall t\in[0,T]\ee
and, by \eqref{boundLps2Lps2}, also
\be\label{nablauks-22}\int_0^t\int_{E_k}|\nabla u^k|^{\frac{ps}2}\,dx\,d\tau\le c\|u_\circ\|_s^s,\qquad\forall t\in[0,T].\ee
Extending to 0 the functions $u^k$ in $\O\setminus E_k$ and using the uniform bounds \eqref{boundLinftyLs},\eqref{nablauks-22} we can extract a subsequence (not relabeled) and find a function $u$ such that
\be\label{weakLsLps2}u^k\mathop{\rightharpoonup}^*u\hbox{ weakly}^*\ \hbox{in }L^\infty(0,T;L^s(\O)),\qquad\nabla u^k\rightharpoonup\nabla u\hbox{ weakly in }L^{\frac{ps}2}(0,T;L^{\frac{ps}2}(\O)).\ee
By the strong convergence of $u_\circ^k$ towards $u_\circ$ in $L^s(\O)$ and by the weak convergence of $\nabla u^k$, letting $k\to\infty$ in \eqref{boundLinftyLs} and in \eqref{nablauks-22} we get
\be\label{uLinftyLs}\|u\|_{L^\infty(0,T;L^s(\O))}\le\|u_\circ\|_s,\qquad \|\nabla u\|_{L^{\frac{ps}2}(0,T;L^{\frac{ps}2}(\O))}\le c\|u_\circ\|_s^{\frac2p}.\ee
Let us define $v^k=u^k(\delta+|u^k|^2)^{\frac{s-2}{2p}}$. A straightforward computation shows that
\be\label{nablavknablauk}|\nabla v^k|^p\le\left(1+\left(\frac{2-s}{2p}\right)^p\right) |\nabla u^k|^p\left(\delta+|u^k|^2\right)^{\frac{s-2}2}.\ee
The Sobolev's inequality, \eqref{nablavknablauk} and \eqref{nrgy} lead to
$$\int_0^t\left(\int_{E_k}|v^k|^{p^*}\,dx\right)^{\frac{n-p}n}\,d\tau\le c\int_0^t\int_{E_k}|\nabla v^k|^p\,dx\,d\tau\le c\left\|(\delta+|u_\circ^k|^2)^\frac12\right\|_s^s.$$
In terms of $u_k$ the above inequality becomes
$$\int_0^t\left(\int_{E_k}|u^k|^{\frac{np}{n-p}}\left(\delta+|u^k|^2\right)^{\frac{(s-2)n}{2(n-p)}}\,dx\right)^{\frac{n-p}n}\,d\tau\le c\left(\delta^{\frac s2}|E_k|+\|u_\circ^k\|_s^s\right).$$
Since $s<2$, letting $\delta\to0$, by monotone convergence we get
$$\int_0^t\left(\int_{E_k}|u^k|^{\frac{n(p+s-2)}{n-p}}\,dx\right)^{\frac{n-p}n}\,d\tau\le c\|u_\circ\|_s^s.$$
If we set $s_1=\frac{n(p+s-2)}{n-p}$ we have that $u^k\in L^{\frac{s_1(n-p)}n}(0,T;L^{s_1}(E_k))$ uniformly in $k$. We have that
$$s_1-s=\frac{p(s+n)-2n}{n-p}$$ 
and if $s>\sex$ then $s_1-s>0$ hence, iterating the process, we obtain an increasing sequence $\{s_m\}$ such that $s_{m+1}-s_m>s_m-s_{m-1}$. In a finite number of steps we get that $u^k\in L^{\frac{\bar s(n-p)}n}(0,T;L^{\bar s}(E_k))$ with $\bar s\ge2$. We remark that, if $p>\frac{2n}{n+1}$ then $s_1-s>0$ for any $s\in(1,2)$ and we have no restrictions on $s$. On the contrary, if $p\le\frac{2n}{n+2}$ then $s_1-s\le0$ for any $s\in(1,2)$ and the iteration is useless. In the end we get
\be\label{LinftyLbars}\int_0^t\|u^k\|_{\bar s}^{\frac{\bar s(n-p)}n}\,d\tau\le c\|u_\circ\|_s^s\ee
and, since $\bar s\ge2$ we can interpolate $L^2$ between $L^s$ and $L^{\bar s}$ obtaining 
\be\label{interpol}\|u^k\|_2\le\|u^k\|^{1-\vartheta}_s\,\|u^k\|_{\bar s}^\vartheta,\qquad \vartheta=\frac{(2-s)\bar s}{2(\bar s-s)}.\ee
It we set $a=\frac{\bar s(n-p)}{n\vartheta}=\frac{2(n-p)(\bar s-s)}{2-s}$, by \eqref{interpol}, \eqref{boundLinftyLs} and \eqref{LinftyLbars} we get
\be\label{LaL2}\int_0^t\|u^k\|_2^a\le\sup_\tau\|u^k(\tau)\|_s^{a(1-\vartheta)}\int_0^t\|u^k\|_{\bar s}^{a\vartheta}\le c\|u_\circ\|_s^{\frac{s((n-p)(\bar s-2)+2-s)}{2-s}}.\ee
Now we go back to the equation \eqref{testW1p}, we use $u^k$ as a test function and we differentiate with respect to $t$ getting
\be\label{energyuk}\frac12\frac d{dt}\|u^k(t)\|_2^2+\|\nabla u^k(t)\|_p^p=0\quad \forall\, t\in[0,T].\ee
By the above equality it follows that, if there exists $\bar t$ such that $\|u^k(\bar t)\|_2=0$ then $\|u^k(t)\|_2=0$ for any $t\ge\bar t$ hence we can suppose $\|u^k(t)\|_2>0$ for any $t$, otherwise what follows is trivially true.
Multiplying by $t$ we have
$$\ba{l}\dy\vs1 0\ge\frac t2\frac d{dt}\|u^k(t)\|_2^2=t\|u^k(t)\|_2\frac d{dt}\|u^k(t)\|_2=t\|u^k(t)\|^{2-a}_2\left(\|u^k(t)\|_2^{a-1}\frac d{dt}\|u^k(t)\|_2\right)\\
\hfill\dy=\frac ta\|u^k(t)\|_2^{2-a}\frac d{dt}\|u^k(t)\|_2^a\ea$$
and, multiplying by $a\|u^k(t)\|_2^{a-2}$
$$0\ge t \frac d{dt}\|u^k(t)\|_2^a=\frac d{dt}\left(t\|u^k(t)\|_2^a\right)-\|u^k(t)\|_2^a.$$
Integrating the above inequality, by \eqref{LaL2}, we have
\be\label{LinftyL2epsilonT}\|u^k(t)\|_2\le \frac c{t^{\frac1a}}\,\|u_\circ\|_s^{\frac{s((n-p)(\bar s-2)+2-s)}{2(n-p)(\bar s-s)}}\quad\forall\, t\in(0,T]\ee
with the constant $c$ not depending on $k$. Fixing $\ve>0$ and integrating in time the identity \eqref{energyuk}, by \eqref{LinftyL2epsilonT} we get
\be\label{LpW1pepsilonT}\int_\ve^T\int_{E_k}|\nabla u^k(\tau)|^p\,dx\,d\tau\le\frac c{\ve^{\frac2a}}\,\|u_\circ\|_s^{\frac{s((n-p)(\bar s-2)+2-s)}{(n-p)(\bar s-s)}}.\ee
Estimates \eqref{LinftyL2epsilonT} and \eqref{LpW1pepsilonT} are enough to say that, up to a subsequence
\be\label{weakLinftyL2}u^k\mathop{\rightharpoonup}^*u\hbox{ weakly}^*\ \hbox{in }L^\infty(\ve,T;L^2(\O)),\ee
$$\nabla u^k\rightharpoonup\nabla u\hbox{ weakly in }L^p(\ve,T;L^p(\O)).$$
We remark that the limit point of above convergences is actually $u$ by the convergences \eqref{weakLsLps2} and the uniqueness of the weak limit. 
\par
By \eqref{LinftyL2epsilonT} and up to further subsequences, we can find $\Theta,\Lambda\in L^2(\O)$ such that
\be\label{weakepsilonT}u^k(\ve)\rightharpoonup \Theta,\qquad u^k(T)\rightharpoonup\Lambda\qquad\mbox{weakly in }L^2(\O).\ee
The estimate \eqref{LpW1pepsilonT} allows to find a function $\chi\in L^{p'}(\ve,T;L^{p'}(\O))$ such that, up to a subsequence
\be\label{weakchi}|\nabla u^k|^{p-2}\nabla u^k\rightharpoonup \chi\quad\mbox{weakly in }L^{p'}(\ve,T;L^{p'}(\O)).\ee
Let us choose $\psi\in C^\infty_0((\ve,T)\times\O)$ and $k$ large enough to have $\psi(t,x)=0$ for any $(t,x)\in(\ve,T)\times(\O\setminus E_k)$. Since $u^k$ is a solution on $E_k$ we can use $\psi$ as a test function in \eqref{testW12} (substituting $u^k$ in place of $u$ and $E_k$ in place of $\O$). Integrating by parts we get
$$\int_\ve^T(u^k,\psi_t)\,dt-\int_\ve^T\left(|\nabla u^k|^{p-2}\nabla u^k,\nabla\psi\right)\,dt=0.$$
Passing to the limit as $k\to\infty$, thanks to \eqref{weakLinftyL2} and \eqref{weakchi}, we have
$$\int_\ve^T(u,\psi_t)\,dt-\int_\ve^T(\chi,\nabla\psi)\,dt=0\qquad\forall\psi\in C^\infty_0\left((\ve,T)\times \O\right).$$
As a consequence of the above identity $u_t\in L^{p'}(\ve,T;\widehat W^{1,p}_0(\O)')$ and, by density
\be\label{testLpV}\int_\ve^T\langle u_t,\psi\rangle\,dt+\int_\ve^T(\chi,\nabla\psi)\,dt=0\qquad\forall\psi\in L^p(\ve,T;\widehat W_0^{1,p}(\O)).\ee
Now we take $\psi\in C^\infty_0\left([\ve,T]\times \O\right)$ in \eqref{testW1p} to get
\be\label{u^ktest[epT]}\int_\ve^T(u^k,\psi_t)-\left(|\nabla u^k|^{p-2}\nabla u^k,\nabla\psi\right)\,dt=\left(u^k(T),\psi(T)\right)-\left(u^k(\ve),\psi(\ve)\right).\ee
Passing to the limit for $k\to\infty$ and remembering \eqref{weakepsilonT} we obtain
\be\label{ThetaLambda}\int_\ve^T(u,\psi_t)-(\chi,\nabla\psi)\,dt=\left(\Lambda,\psi(T)\right)-\left(\Theta,\psi(\ve)\right).\ee
Since $u\in C([\ve,T];L^2(\O))$ we can integrate \eqref{testLpV} by parts to gain
\be\label{uepsilonuT}-\int_\ve^T(u,\psi_t)\,dt+\left(u(T),\psi(T)\right)-\left(u(\ve),\psi(\ve)\right)+\int_\ve^T(\chi,\nabla\psi)\,dt=0.\ee
Comparing the identity \eqref{uepsilonuT} with \eqref{ThetaLambda}, by the arbitrariness of $\psi$ we get
\be\label{Theta=u(ve)}u(\ve)=\Theta,\qquad u(T)=\Lambda.\ee
Now we choose an arbitrary function $\psi\in C^\infty_0\left([\ve,T)\times\O\right)$ and we use it in equation \eqref{u^ktest[epT]} obtaining
\be\label{u^kpsiepT}\int_\ve^T(u^k,\psi_t)-\left(|\nabla u^k|^{p-2}\nabla u^k,\nabla\psi\right)\,dt=-\left(u^k(\ve),\psi(\ve)\right).\ee
If we apply Theorem \ref{W22IN} to the function $u^k$ on $\left(\frac\ve2,T\right)\times E_k$, using $u^k(\frac\ve2)$ as initial data, we get
$$\|D^2u^k(t)\|_2\le\frac c{\left(t-\frac\ve2\right)^\gamma}\left\|u^k\left(\frac\ve2\right)\right\|_2^\alpha,\qquad \forall t\in\left(\frac\ve2,T\right]$$
for suitable $\alpha,\gamma >0$. We remark that the constant $c$ does not depend on $k$. This is not totally trivial but it is a consequence of Theorem \ref{CGMThm1.2} and \cite[Corollary 3.1]{CGM} and it relies on the fact that $c$ depends on the geometric properties of the boundary of $E_k$ and not on its measure. Now we use \eqref{LinftyL2epsilonT} with $t=\ve$ to get
\be\label{D2estimateonK}\|D^2u^k(t)\|_2\le \frac c{\ve^{\gamma_1}}\|u_\circ\|_s^{\alpha_1},\qquad\forall t\in[\ve,T]\ee
for suitable $\alpha_1,\gamma_1>0$ and $c$ not depending on $k$. Now let be $K\subset\R^n$ an open bounded set such that $\psi(t,x)=0$ for any $(t,x)\in[\ve,T]\times(\R^n\setminus K)$. By estimate \eqref{D2estimateonK} we have that 
$u^k\in L^\infty(\ve,T,W^{2,2}(K))$ uniformly in $k$ and, by the Rellich-Kondrachov theorem,
$$\nabla u^k(t,x)\longrightarrow \nabla u(t,x)\ \mbox{a.e. in } [\ve,T]\times K$$
up to a subsequence.
Since, by \eqref{LpW1pepsilonT}
$$\int_\ve^T\int_K\left||\nabla u^k|^{p-2}\nabla u^k\right|^{p'}\,dx\,dt\le c$$
uniformly in $k$, we can apply \cite[Lemma I.1.3]{lions} to obtain that 
\be\label{convnablauktonablau}|\nabla u^k|^{p-2}\nabla u^k\rightharpoonup |\nabla u|^{p-2}\nabla u\quad\mbox{weakly in } L^{p'}(\ve,T,L^{p'}(K))\ee
and, by \eqref{weakchi}
$$\chi=|\nabla u|^{p-2}\nabla u\qquad\mbox{for a.e. }(t,x)\in(\ve,T)\times K.$$
We remark once again that the function $u$ is defined by the global weak convergences \eqref{weakLsLps2} hence, by the arbitrariness of $\psi$ and $\ve$, we get that the above identity holds  almost everywhere in $(0,T)\times\O$. Passing to the limit for $k\to\infty$ in equation \eqref{u^kpsiepT} with the aid of \eqref{convnablauktonablau} we have
\be\label{equationforuoneT}\int\limits_\ve^T(u,\psi_t)-\left(|\nabla u|^{p-2}\nabla u,\nabla\psi\right)\,dt=-\left(u (\ve),\psi(\ve)\right).\ee
Now we complete the existence proof taking a test function $\psi\in C^\infty_0\left([0,T)\times\O\right)$. 
By equation \eqref{equationforuoneT} we get
\be\label{equationforuon0T}\int_0^T(u,\psi_t)-\left(|\nabla u|^{p-2}\nabla u,\nabla\psi\right)\,dt=\int_0^\ve(u,\psi_t)-\left(|\nabla u|^{p-2}\nabla u,\nabla\psi\right)\,dt-\left(u(\ve),\psi(\ve)\right).\ee
Using the continuity of the Lebesgue integral with respect to the domain of integration we get that the first integral on the right-hand side of the above identity vanishes as $\ve\to0$. It resmains to estimate the term $\left(u(\ve),\psi(\ve)\right)$. We have
\be\label{A+B+C}\ba{l}\vs1\dy\left|\left(u(\ve),\psi(\ve)\right)-\left(u_\circ,\psi(0)\right)\right|\\
\hfill\dy\vs1\le\left|\left(u(\ve)-u^k(\ve),\psi(\ve)\right)\right|
+\left|\left(u^k(\ve),\psi(\ve)\right)-\left(u_\circ^k,\psi(0)\right)\right|
+\left|\left(u^k_\circ-u_\circ,\psi(0)\right)\right|\\
\hfill\dy=:A_k(\ve)+B_k(\ve)+C_k\ea\ee
By \eqref{weakepsilonT} and \eqref{Theta=u(ve)} we get, for any $\ve>0$
$$\lim_{k\to\infty} A_k(\ve)=0.$$
Since $u^k$ is a solution in $E_k$, for any $k$ large enough to contain the spatial support of $\psi$, we have (see \eqref{testsmoothst})
$$\ba{l}\dy\vs1B_k(\ve)\le\int_0^\ve\left|(u^k,\psi_t)-\left(|\nabla u^k|^{p-2}\nabla u^k,\nabla\psi\right)\right|\,dt\\
\hfill\dy\vs1\le \|u^k\|_{L^\infty(0,T;L^s(\O))}\|\psi_t\|_{L^1(0,\ve;L^{s'}(\O))}\\
\hfill\dy\vs1+\|\nabla u^k\|_{L^{\frac{ps}2}(0,T;L^{\frac{ps}2}(\O))}^{p-1}\|\nabla\psi\|_{L^{\left(\frac{ps}{2(p-1)}\right)'}(0,\ve;L^{\left(\frac{ps}{2(p-1)}\right)'}(\O))}\\
\hfill\dy\le B(\ve)
\ea$$
where, by \eqref{boundLinftyLs} and \eqref{nablauks-22}, $B(\ve)$ is a function not depending on $k$ and infinitesimal as $\ve\to0$.
Finally, 
$$C_k\le\|u_\circ^k-u_\circ\|_s\|\psi(0)\|_{s'}\longrightarrow 0\quad\mbox{for }k\to\infty$$
by hypothesis on $u_\circ^k$. Passing to the limit for $k\to\infty$ in \eqref{A+B+C} we get
\be\label{B(eps)}\left|\left(u(\ve),\psi(\ve)\right)-\left(u_\circ,\psi(0)\right)\right|\le B(\ve)\ee
and passing to the limit for $\ve\to0$ in \eqref{equationforuon0T} we get that $u$ is a solution with initial data $u_\circ\in L^s(\O)$.


It remains to prove that the initial datum is assumed strongly in $L^s(\O)$.
Let us fix $\vp\in C^\infty_0(\O)$, $0<\delta<T$ and set $\Phi(x,t)=\vp(x)\theta(t)$ with $\theta\in C^\infty\left([0,T)\right)$ and $\theta(t)=1$ for any $t\in[0,\delta]$. Using $\Phi$ as a test function and reasoning exactly as in the evaluation of $B_k(\ve)$, by \eqref{B(eps)} we can get
$$\left|\left(u(t)-u_\circ,\vp\right)\right|\le B(t)$$
for any $t\in[0,\delta]$, hence
$$\lim_{t\to0^+}\left(u(t)-u_\circ,\vp\right)=0\qquad\forall\vp\in C^\infty_0(\O).$$
The density of $C^\infty_0(\O)$ in $L^s(\O)$ gives the weak convergence in $L^s(\O)$ of $u(t)$ to $u_\circ$. By lower semicontinuity we also get
$$\|u_\circ\|_s\le\liminf_{t\to0^+}\|u(t)\|_s.$$
By \eqref{uLinftyLs} we have 
$$\limsup_{t\to0^+}\|u(t)\|_s\le\|u_\circ\|_s$$
hence 
$$\lim_{t\to0^+}\|u(t)\|_s=\|u_\circ\|_s$$
and the uniform convexity of $L^s(\O)$ gives the strong convergence
$$\lim_{t\to0^+}\|u(t)-u_\circ\|_s=0.$$
Finally, the estimate \eqref{L2leqLs} follows passing to the limit as $k\to\infty$ in \eqref{LinftyL2epsilonT} using \eqref{weakLinftyL2} and lower-semicontinuity.
\end{proof}

\section{Extinction of the solutions}\label{extinction}

\begin{proof}[Proof of Theorem \ref{EXTIN}]
Let us consider, for any $R>0$, the smooth cut-off function $h_R$ defined in \eqref{hR}.
Let us fix $\sigma\in(\sex,s]$ and consider the solution $u$ solution obtained in Theorem \ref{Lsexistence} with initial data $u_\circ\in L^\sigma(\O)$. Then, for any $\ve>0$ we have that $u$ solves equation \eqref{testW1p} in $(\ve,T)\times\O$ hence we can differentiate it with respect to $t$ obtaining
\be\label{differential2.5}\langle u_t,\psi\rangle=-\left(|\nabla u|^{p-2}\nabla u,\nabla\psi\right)\ \forall \psi\in V^{p,p'}(\ve,T;\O).\ee
For any $\delta>0, $ and $R$ suitably large, we have that
$$h_R u(|u|^2+\delta)^{\frac{\sigma-2}2}\in V^{p,p'}(\ve,T;\O)$$
hence we can use it as a test function in \eqref{differential2.5} obtaining
$$\ba{l}\dy\vs1-\frac1\sigma\frac d{dt}\left\|h_R^{1/\sigma}\left(|u|^2+\delta\right)^{1/2}\right\|_\sigma^\sigma
=\int_\O|\nabla u|^{p-2}\nabla u\cdot(\nabla h_R\otimes u)\left(|u|^2+\delta\right)^{\frac{\sigma-2}2}\,dx\\
\hfill\dy+\int_\O|\nabla u|^ph_R\left(|u|^2+\delta\right)^{\frac{\sigma-2}2}\,dx
+(\sigma-2)\int_\O|\nabla u|^p h_R |u|^2 \left(|u|^2+\delta\right)^{\frac{\sigma-4}2}\,dx.\ea
$$
Since $\sigma\le2$, setting $E_{R,2R}=\left\{x\in\O:R<|x|<2R\right\}$, we have
\be\label{diffineq}\ba{l}\dy\vs1\frac1\sigma\frac d{dt}\left\|h_R^{1/\sigma}\left(|u|^2+\delta\right)^{1/2}\right\|_\sigma^\sigma
+(\sigma-1)\int_\O|\nabla u|^p h_R\left(|u|^2+\delta\right)^{\frac{\sigma-2}2}\,dx\\
\hfill\dy\le\frac cR\int_{E_{R,2R}}|\nabla u|^{p-1}\left(|u|^2+\delta\right)^{\frac{\sigma-1}2}\,dx.\ea\ee
Applying the H\"older inequality with exponents $\frac{p\sigma}{2(p-1)},\ \frac\sigma{\sigma-1}$ and $\frac{\sigma p}{2-p}$ on the integral on the right-hand side of \eqref{diffineq} we get
\be\label{right}\int_{E_{R,2R}}|\nabla u|^{p-1}\left(|u|^2+\delta\right)^{\frac{\sigma-1}2}\,dx
\le c\|\nabla u\|_{L^\frac{p\sigma}2(E_{R,2R})}^{p-1}\left\|\left(|u|^2+\delta\right)^{1/2}\right\|_{L^\sigma(E_{R,2R})}^{\sigma-1}R^{\frac{n(2-p)}{\sigma p}}.\ee
Using Inequality \ref{reverse} with exponents $\frac\sigma2$ and $\frac\sigma{\sigma-2}$ on the integral on the left-hand side of \eqref{diffineq} we have
\be\label{left}\ba{l}\dy\vs1\int_\O|\nabla u|^ph_R\left(|u|^2+\delta\right)^{\frac{\sigma-2}2}\,dx=\int_\O|\nabla u|^ph_R^{3-\sigma}\left(\left(|u^2|+\delta\right)h^2_R\right)^{\frac{\sigma-2}2}\,dx\\
\hfill\dy\ge\left\||\nabla u|h_R^{\frac{3-\sigma}p}\right\|_{\frac{p\sigma}2}^p\left\|\left(|u|^2+\delta\right)^{1/2}h_R\right\|_\sigma^{\sigma-2}
\ge\left\||\nabla u|h_R^{\frac{3-\sigma}p}\right\|_{\frac{p\sigma}2}^p\left\|\left(|u|^2+\delta\right)^{1/2}h_R^{1/\sigma}\right\|_\sigma^{\sigma-2}
.\ea\ee
Now we remark that
$$\frac d{dt}\left\|\left(|u|^2+\delta\right)^{1/2}h_R^{1/\sigma}\right\|_\sigma^2=\frac2\sigma\left\|\left(|u|^2+\delta\right)^{1/2}h_R^{1/\sigma}\right\|_\sigma^{2-\sigma}\frac d{dt}\left\|\left(|u|^2+\delta\right)^{1/2}h_R^{1/\sigma}\right\|_\sigma^\sigma$$
hence we multiply inequality \eqref{diffineq} by $\left\|\left(|u|^2+\delta\right)^{1/2}h_R^{1/\sigma}\right\|_\sigma^{2-\sigma}$ obtaining, by \eqref{right} and \eqref{left}
$$\ba{l}\dy\vs1\frac12\frac d{dt}\left\|\left(|u|^2+\delta\right)^{1/2}h_R^{1/\sigma}\right\|_\sigma^2+(\sigma-1)\left\||\nabla u|h_R^{\frac{3-\sigma}p}\right\|_{\frac{p\sigma}2}^p\\
\hfill\dy\le c\|\nabla u\|_{L^{\frac{p\sigma}2}(E_{R,2R})}^{p-1}\left\|\left(|u|^2+\delta\right)^{1/2}\right\|_{L^\sigma(E_{2R})} R^{\frac{n(2-p)}{\sigma p}-1}.\ea$$
Let us integrate in time the above inequality to get
$$\ba{l}\dy\vs1\left\|\left(|u(t)|^2+\delta\right)^{1/2}h_R^{1/\sigma}\right\|_\sigma^2+2(\sigma-1)\int_\ve^t\left\||\nabla u|h_R^{\frac{3-\sigma}p}\right\|_{\frac{p\sigma}2}^p\,d\tau\\
\hfill\dy\le cR^{\frac{n(2-p)}{\sigma p}-1}\int\limits_\ve^t \|\nabla u\|_{L^{\frac{p\sigma}2}(E_{R,2R})}^{p-1}\left\|\left(|u|^2+\delta\right)^{1/2}\right\|_{L^\sigma(E_{2R})}d\tau +\left\|\left(|u(\ve)|^2+\delta\right)^{1/2}h_R^{1/\sigma}\right\|_\sigma^2.\ea$$
Keeping count that all the above integrals are evaluated on the bounded set $E_{2R}$, we can apply the dominated convergence theorem letting $\delta\to0$ in order to obtain
\be\label{hRsigma}\ba{l}\dy\vs1\left\|u(t)h_R^{1/\sigma}\right\|_\sigma^2+2(\sigma-1)\int_\ve^t\left\||\nabla u|h_R^{\frac{3-\sigma}p}\right\|_{\frac{p\sigma}2}^p\,d\tau\\
\hfill\dy\le cR^{\frac{n(2-p)}{\sigma p}-1}\int_\ve^t \|\nabla u\|_{L^{\frac{p\sigma}2}(E_{R,2R})}^{p-1}\left\|u\right\|_{L^\sigma(E_{2R})}\,d\tau +\left\|u(\ve)h_R^{1/\sigma}\right\|_\sigma^2.\ea\ee
We remark that
$$\frac{n(2-p)}{\sigma p}-1<0 \iff \sigma>n\left(\frac2p-1\right)=\sex$$
hence, passing to the limit as $R\to\infty$ in \eqref{hRsigma} we have
\be\label{usigmaepsilon}\|u(t)\|_\sigma^2+2(\sigma-1)\int_\ve^t\|\nabla u\|_{\frac{p\sigma}2}^p\,d\tau\le\|u(\ve)\|_\sigma^2.\ee
Using the strong continuity of $u$ in $L^\sigma$ for $t=0$ (see Theorem \ref{Lsexistence}) we can pass to the limit for $\ve\to0$ in the above inequality obtaining
\be\label{usigmau0sigma}\|u(t)\|_\sigma^2+2(\sigma-1)\int_0^t\|\nabla u\|_{\frac{p\sigma}2}^p\,d\tau\le\|u_\circ\|_\sigma^2.\ee
Now we remark that, being the right-hand side of \eqref{usigmau0sigma} bounded, we have
\be\label{sigmatosex}\lim_{\sigma\to\sex}\|u(t)\|_\sigma=\|u(t)\|_\sex,\quad\lim_{\sigma\to\sex}\|u_\circ\|_\sigma=\|u_\circ\|_\sex,\quad \lim_{\sigma\to\sex}\|\nabla u(t)\|_{\frac{p\sigma}2}=\|\nabla u(t)\|_{\frac{p\sex}2}\ee
and, by Fatou lemma
$$\int_\ve^t\|\nabla u\|_{\frac{p\sex}2}^p\,d\tau\le\liminf_{\sigma\to\sex}\int_\ve^t\|\nabla u\|_{\frac{p\sigma}2}^p\,d\tau.$$
Passing to the $\liminf$ in inequality \eqref{usigmaepsilon} we get
\be\label{liminf}\|u(t)\|_\sex^2+2(\sex-1)\int_\ve^t\|\nabla u\|_{\frac{p\sex}2}^p\,d\tau\le\|u(\ve)\|_\sex^2.\ee
By means of the Sobolev inequality, observing that $\left(\frac{p\sex}2\right)^*=\sex$, we have
\be\label{diffinequal}\|u(t)\|_\sex^2+c\int_\ve^t\|u(\tau)\|_\sex^p\,d\tau\le\|u(\ve)\|_\sex^2.\ee
Now we set $w(t)=\|u(t)\|_\sex^2$ obtaining
\be\label{integrineqw}w(t)+c\int_\ve^t w(\tau)^{\frac p2}\,d\tau\le w(\ve),\qquad \forall\,0\le\ve\le t.\ee
Let us consider the Cauchy problem
$$\left\{\ba{l}z'+cz^{\frac p2}=0\\
z(0)=\|u_\circ\|_\sex^2\\
z(t)\ge0\ea\right.$$
whose solution is $z(t)=\left(\|u_\circ\|_\sex^{2-p}-c\frac{2-p}2t\right)^{\frac2{2-p}}$ which exists if $t\le\frac2{c(2-p)}\|u_\circ\|_\sex^2=:T_{ex}$.\par
We want to prove that $w(t)\le z(t)$ for any $t$. By contradiction, let us suppose that there exists $\bar t<T_{ex}$ and $\delta>0$ such that $w(\bar t)=z(\bar t)$ and $w(t)>z(t)$ for any $t\in (\bar t,\bar t+\delta)$ (we remind that $u\in C([0,T];L^2(\Omega)$).
By integration we get
\be\label{integralz}z(t)+c\int_{\bar t}^tz(\tau)^{\frac p2}\,d\tau=z(\bar t).\ee
Writing \eqref{integrineqw} with $\ve=\bar t$ and subtracting from it identity \eqref{integralz} we have
$$w(t)-z(t)+c\int_{\bar t}^t w(\tau)^{\frac p2}-z(\tau)^{\frac p2}\,d\tau\le w(\bar t)-z(\bar t)=0,\qquad \forall t\in(\bar t,\bar t+\delta)$$
which is impossible since $w(t)>z(t)$.
This concludes the proof in the case of finite time extinction.
\par
The uniqueness of the solution if $u_\circ\in L^\sex(\O)\cap L^2(\O)$ follows by Theorem \ref{existenceexterior}.
\par
Let us now consider the case $p=\frac{2n}{n+1}$ and $u_\circ\in L^1(\O)\cap L^s(\O)$ with $1<s\le2$. We remark that in this case $\sex=1$ and, as before, we consider the solution $u$ provided by Theorem \ref{Lsexistence}. With this choice of exponents, inequality \eqref{liminf} becomes
\be\label{L1normdecreas}\|u(t)\|_1\le\|u_\circ\|_1.\ee
Substituting $\psi$ with $u$ in equation \eqref{differential2.5} we get
\be\label{energyL2}\frac d{dt}\|u(t)\|_2^2+2\|\nabla u(t)\|_{\frac{2n}{n+1}}^{\frac{2n}{n+1}}=0\qquad\forall t\in(\ve,T).\ee
By means of the Gagliardo-Nirenberg inequality we have
$$\|u(t)\|_2\le c\|\nabla u(t)\|_{\frac{2n}{n+1}}^{\frac n{n+1}}\|u(t)\|_1^{\frac1{n+1}}$$
hence, by \eqref{L1normdecreas}, we obtain
$$\|\nabla u(t)\|_{\frac{2n}{n+1}}^{\frac{2n}{n+1}}\ge c\,\frac{\|u(t)\|_2^2}{\hskip.1cm\|u(t)\|_1^{\frac2{n+1}}}\ge c\,\frac{\|u(t)\|_2^2}{\|u_\circ\|_1^{\frac2{n+1}}}.$$
Substituting the above estimate in identity \eqref{energyL2} we get the differential inequality
$$\frac d{dt}\|u(t)\|_2^2+\frac c{\|u_\circ\|_1^{\frac2{n+1}}}\|u(t)\|_2^2\le0\qquad\forall t\in(\ve,T)$$
which gives
$$\frac d{dt}\log\left(\|u(t)\|_2^2\right)\le-\frac{c}{\|u_\circ\|_1^{\frac2{n+1}}\hskip-.4cm}$$
and, integrating on $(\ve,t)$
$$\|u(t)\|_2^2\le\|u(\ve)\|_2^2 \,e^{-c(t-\ve)\|u_\circ\|_1^{-2/(n+1)}}.$$
Thanks to \eqref{L2leqLs} we finally get
$$\|u(t)\|_2\le \frac c{\ve^\gamma}\|u_\circ\|_s^\alpha e^{-c(t-\ve)\|u_\circ\|_1^{-1/(n+1)}}\qquad\forall t>\ve.$$
\end{proof}

\section{The energy relation for linear parabolic systems: an extension to $L^q$ norm, $q\in(1,2]$}\label{energyLq}

In this last section we prove Theorem \ref{LPS}.

\begin{proof}
The existence and uniqueness for this problem is a classical result. To obtain the estimate \eqref{LPS-I} we multiply equation \eqref{LPP}$_1$ by $u(|u|^2+\delta)^{\frac{\sigma-1}2}h_R$, with $h_R$ defined in \eqref{hR}. 
We remark that, 
Theorem \ref{EXTIN} is stated for $p<2$ but the computations in its proof make perfectly sense also for $p=2$, since the existence is known.
Hence we can proceed as in the proof of Theorem \ref{EXTIN}, substituting $p=2$ to get \eqref{usigmau0sigma} that becomes
$$\|u(t)\|_\sigma^2+2(\sigma-1)\int_0^t\|\nabla u\|_\sigma^2\,d\tau\le\|u_\circ\|_\sigma^2,\qquad\forall t>0.$$

\end{proof}

\vskip2cm
\leftskip5.4cm
\noindent 
Dipartimento di Matematica e Fisica\\
Universit\`a degli Studi della Campania ``L. Vanvitelli''\\
Viale Lincoln 5, 81100 Caserta,  Italy\\
francesca.crispo@unicampania.it
\bigskip
\par
\noindent
Dipartimento di Matematica\\
Universit\`a di Pisa\\
Via Buonarroti 1, 56127 Pisa, Italy\\
carlo.romano.grisanti@unipi.it
\bigskip
\par
\noindent
Dipartimento di Matematica e Fisica\\
Universit\`a degli Studi della Campania ``L. Vanvitelli''\\
Viale Lincoln 5, 81100 Caserta,  Italy\\
paolo.maremonti@unicampania.it

\end{document}